\theoremstyle{plain}
\newcommand{\concat}{\ensuremath{+\!\!\!\!+\,}}
\newcommand{\nconcat}{+\!\!+}
\newtheorem{theorem}{Theorem}
\newtheorem{corollary}{Corollary}
\newtheorem{proposition}{Proposition}
\newtheorem{definition}{Definition}
\newtheorem{algorithm}{Algorithm}
\newtheorem{remark}{Remark}
\newtheorem*{properties*}{Properties}
\newtheorem{example}{Example}
\newtheorem{lemma}{Lemma}
\newcommand{\integers}{\mathbb{Z}}
\newcommand{\BGF}{\boldsymbol{\Phi}}
\newcommand{\BXI}{\boldsymbol{\Xi}}
\newcommand{\BGAMMA}{\boldsymbol{\Gamma}}
\newcommand{\B}{\rm \mathbf{B}}
\newcommand{\complex}{\mathbb{C}}
\newcommand{\A}{{\rm \mathbf{A}}}
\newcommand{\Unit}{\boldsymbol{\rm e}}
\newcommand{\Hes}{{\rm \mathbf{H}}}
\newcommand{\0}{{\rm \boldsymbol{0}}}
\newcommand{\2}{\textit{\large 2}}
\newcommand{\R}{\mathfrak{R}}
\newcommand{\SG}{\mathbb{S}}
\newcommand{\I}{\mathbb{I}}
\newcommand{\BI}{\boldsymbol{\rm I}}
\newcommand{\J}{\mathfrak{J}}
\newcommand{\F}{\boldsymbol{\rm F}}
\newcommand{\G}{\boldsymbol{\rm G}}
\newcommand{\y}{\boldsymbol{\rm y}}
\newcommand{\x}{\boldsymbol{\rm x}}
\newcommand{\BR}{\boldsymbol{\rm R}}
\newcommand{\per}{\ell}
\newcommand{\SEPs}{{\mathcal{S}}}
\newcommand{\NSEPs}{{\mathcal{E}}}
\newcommand{\card}{{\rm card}}
\newcommand{\FZ}{\mathcal{Z}}
\newcommand{\FX}{\mathfrak{X}}
\newcommand{\FY}{\mathfrak{Y}}
\newcommand{\ISCs}{\mathfrak{C}}
\newcommand{\z}{\mathfrak{z}}
\newcommand{\1}{\boldsymbol{1}}
\newcommand{\n}{{\rm n}}
\newcommand{\br}{\boldsymbol{\rm r}}
\newcommand{\lbracket}{[\![}
\newcommand{\rbracket}{]\!]}
\newcommand{\FRREF}{\boldsymbol{\rm FRREF}}
\begin{document}
	
	\bibliographystyle{plain}
	
\title{\large{\textbf{Explicit and Compact Representations for the One-Sided Green's Function\\  and the Solution of Linear Difference Equations with Variable Coefficients\footnote{This paper is an integrated version of an earlier publication with the homonym title. It includes some remarkable theoretical supplements and updated references.}}}}
\author{A. G. Paraskevopoulos$^{\ddagger}$ and M. Karanasos$^{\dagger}$\thanks{The authors would like to thank Dr. Stavros Dafnos for his valuable comments and  writing assistance on various topics of this manuscript.}   \\
	%EndAName
	$^{\ddagger}$University of Piraeus, $^{\dagger}$Brunel University London
}
	\maketitle
	\begin{abstract}
\noindent Leibniz' combinatorial formula for determinants is modified to establish a condensed and easily handled compact representation for Hessenbergians, referred to here as Leibnizian representation. Alongside, the elements of a fundamental solution set associated with linear difference equations with variable coefficients of order $p$ are explicitly represented by $p$ banded Hessenbergian solutions, built up solely of the variable coefficients. This yields banded Hessenbergian  representations for the elements both of the product of companion matrices and of the determinant ratio formula of the one-sided Green's function (Green's function for short).
Combining the above results, the elements of the foregoing notions are endowed with compact representations formulated here by Leibnizian and nested sum representations. 
We show that the elements of the fundamental solution set can be expressed in terms of the first banded Hessenbergian fundamental solution, called principal determinant function. We also show that the Green's function coincides with the principal determinant function,  when both functions are restricted to a fairly large domain. These results yield, an explicit and compact representation  of the Green's function restriction along with an explicit and compact solution representation of the previously stated type of difference equations in terms of the variable coefficients, the initial conditions and the forcing term. 
The equivalence of the Green's function solution representation and the well known single determinant solution representation is derived from first principles. Algorithms and automated software are employed to illustrate the main results of this paper.
		
\bigskip	
\textbf{Keywords}: Green's function, Linear difference equation, Variable coefficients, Linear recurrence, Compact representation, Hessenberg matrix, Hessenbergian, Fundamental set, ARMA models.

\bigskip

MSC: 0.5-0.8, 15A99, 39A06, 65Q10, 68R05 

\end{abstract}

\section{Introduction}
Linear difference equations with variable coefficients of order $p$ (briefly VC-LDEs($p$)) are broadly used to model discrete-time non-stationary stochastic processes such as autoregressive moving average (ARMA) models with time-dependent  coefficients. This type of models, compared to those with constant coefficients, turn out to be more realistic and sensitive to abrupt and structural changes, as they are efficient approximations to non-linear ones, while linearity maintains interpretation and forecasting advantages (see \cite{Granger2008}). 
Efficient explicit representations to the solution of VC-LDEs($p$) having order greater than one ($p>1$) is a long-standing research topic. 
There are two dominant schemes for an explicit solution representation of VC-LDEs($p$), those of determinant representation (see \cite{Riz81, KitRep93}) and those of compact representation (see \cite{MaEx98, LimDay11}). A pioneer work to link the two representation schemes was recently accomplished by Marrero and Tomeo in \cite{MaTo16}, establishing there the equivalence between the combinatorial solution representation of VC-LDEs($p$) obtained by Mallik in \cite{MaEx98} and the single determinant  solution representation obtained by Kittappa in \cite{KitRep93}.  
They also established in \cite{MaTo16} a nested sum representation for Hessenbergians and, as a consequence, a compact solution representation of VC-LDEs($p$). 
	
In this context, we provide here a more condensed combinatorial representation for Hessenbergians, referred to as \textit{Leibnizian representation} (see eq. (\ref{compact-form}) in Theorem \ref{Compact form of Hessenbergian}). An algorithm for the symbolic computation of the Leibnizian representation of Hessenbergians is provided in Appendix \ref{Appendix: Algorithms}, Algorithm \ref{algo. Compact}. Unlike the Leibniz combinatorial formula for $k$th order determinants, which consists of $k!$ singed elementary products (SEPs) and their summation index ranges over the symmetric group of permutations, the Leibnizian representation of Hessenbergians, obtained here, is a sum of $2^{k-1}$ distinct non-trivial SEPs, whose summation index ranges over integer intervals. The latter has profound effects to the asymptotic properties of the solutions of VC-LDEs($p$).
	
Judging from the missing cross references, the solution representations obtained in the above cited references have not been utilized in time series  modelling.
In contrast, the large family of time-varying coefficient models, such as ARMA processes with variable coefficients (TV-ARMA), employ the one-sided Green's function representation (Green's function for short) of a particular solution of the associated VC-LDE, because it facilitates the development of elegant and generic expressions for their fundamental properties, including the Wold-Cram\'{e}r solution representation, the asymptotic properties as well as the optimal linear forecasts. This approach was originated by Miller \cite{MiLD68} (see also \cite{Hal78, SinPei87, KowSi91}) as an alternative to the standard characteristic polynomial formulation, which has been broadly used in the solution representation of LDEs($p$) and associated ARMA models with constant coefficients, but loses its strength in the presence of variable coefficients (see \cite{Hal78}). 
	
Explicit representations of the Green's function strongly depend upon the availability of a fundamental set of solutions (see \cite{MiLD68}, eq. (2.6), p. 39  or  \cite{AGLDE00}, eq. (2.11.7), p. 77)), whose elements (called linearly independent or fundamental solutions) must be explicitly expressed and computationally tractable.\footnote{An explicit expression evaluated in polynomial running time will be referred to as computationally tractable.}  The lack, in the general case, of a fundamental solution set associated with VC-LDEs($p$), whose elements fulfil the above mentioned characteristics, has led to a dichotomy between the explicit representation of the properties of TV-ARMA models in terms of the Green's function and the recursive computation of this function (see for example \cite{Hal78}).
	
The two research paths in the literature concerning solution representations of VC-LDEs($p$) and corresponding representations of TV-ARMA models, respectively,  have been increased over time, but the overlap between them has not. The results of the present paper establish common ground developments between them and provide the mathematical framework for a unified theory of TV-ARMA  models including processes with deterministic or stochastic variable coefficients (see \cite{KaPaCa21}). An application of this theory is the modelling of stock volatilities during financial crises, presented in~\cite{KaMRV14}.
	
In the present research, we introduce a fundamental set of solutions associated with VC-LDEs($p$), whose terms are banded Hessenbergians initiated by $p$ distinct unit vectors of the same magnitude $p$, respectively (see Subsection \ref{Fundamental and General Homogeneous Solutions}, Proposition \ref{KSIs solutions} and Theorem \ref{Theo: fundamental solution set}). The nonzero entries of the associated Hessenberg matrices are the variable coefficients of a VC-LDE($p$) evaluated at consecutive point instances. The banded Hessenbergian form of the aforementioned $p$ fundamental solutions is strongly suggested by their simultaneous construction, as a result of the infinite Gaussian elimination algorithm (see in Subsection \ref{subs. Infinite Gaussian Elimination}). Banded Hessenbergians are computationally tractable due to the linear time complexity needed for their evaluation (see the discussion below Corollary \ref{cor. Green's function  homogeneous solution representation}). The first fundamental solution gives rise to the \textit{principal determinant function}, denoted by $\xi_{t,r}$ (see Definition \ref{Principal determinant}).
In Proposition \ref{prop. global feature} we show that the elements of the aforementioned fundamental solution set, and therefore of the general homogeneous solution of VC-LDEs($p$), can be expressed in terms of the function $\xi_{t,r}$. A particular solution is also expressed as a linear combination of $\xi_{t,r}$ times the forcing terms of the VC-LDE($p$) (see eq. (\ref{particular solution determinant representation2}) in Proposition \ref{particular solution determinant representation}) and therefore as a Hessenbergian, but not, in the general case, as a banded one. 
	
Two of the main results of this paper concern explicit and compact representations of the Green's function. The first, recovers the defining formula of the Green's function, usually denoted by $H(t,r)$,  as a ratio of two determinants, but now their elements are banded Hessenbergians built up solely of the variable coefficients (see Theorem \ref{theo: Explicit Green's Function}). The second, in Theorem \ref{theo. extension of the Green's function domain}, shows that the restriction of the principal determinant function $\xi_{t,r}$ on a suitably defined domain $\FZ$, coincides with the restriction of the Green's function on $\FZ$. It turns out that $\FZ$ exactly matches the domain restriction of $\xi_{t,r}$, involved in the solution of VC-LDEs. This result allows us to exchange the roles between $\xi_{t,r}$ and $H(t,r)$ in the solution formulas of VC-LDEs($p$) (see Corollary \ref{cor. Green's function  homogeneous solution representation}).
%We further show that $\FZ$ is a maximal set yielding the identification $H(t,r)=\xi_{t,r}$ for all $(t,r)\in\FZ$ (see the discussion below Corollary \ref{cor. Green's function  homogeneous solution representation}). 
To the extend of our Knowledge there are no fully explicit representations of the homogeneous and non-homogeneous solutions of VC-LDEs($p$) (see  eqs. (\ref{Green's representation of the homog. solution}) and (\ref{nonhomogeneous solution}) respectively), exclusively in terms of the Green's function, the variable coefficients, the initial conditions and the forcing terms. As a consequence, the main notions associated with VC-LDEs are solely expressed in terms of $\xi_{t,r}$, including the fundamental set of solutions (see Proposition \ref{prop. global feature}), the product of companion matrices (see Theorem \ref{theo: explicit PCM}), the Green's function (see eq. (\ref{eq: Explicit Green's Function})) and the  homogeneous and nonhomogeneous solutions (see eqs. (\ref{eq. general homogeneous solution}) and (\ref{nonhomogeneous solution2}), respectively). %Taking into account that all the elements of the Green's function determinant ratio formula can be expressed in terms of the principal determinant function defined on $\FZ$, they can be replaced by corresponding terms of the Green's function yielding an expression of the Green's function in terms of the function itself (see Subsection \ref{compact Green's Function Determinant-Ratio Formula}).
By replacing the Green's function with the principal determinant function in the solution formulas, employed by the previously cited works on time varying coefficient models, the above noted dichotomy is reconciled (see \cite{KaPaCa21}). 
 
In Proposition \ref{equivalence} we show from first principles the equivalence between the Green's function solution representation and the single determinant solution representation obtained in \cite{KitRep93}. This equivalence result capitalizes on an alternative  building process yielding the same fundamental solution set, but originated by Cramer's rule rather than the infinite Gaussian elimination algorithm (see the discussion below Proposition \ref{equivalence}).
	
Thanks to the Leibnizian and nested sum representations of Hessenbergians, the paper concludes with the compact representations of the Green's function restriction, involved in the general solution of VC-LDEs($p$), and of the solution itself (see Section \ref{Compact Representations}).  In  Algorithm \ref{algo. general solution}  of Appendix  the Leibnizian compact representation of the Green's function and the solution of VC-LDEs($p$) are verified by a symbolic computation.
	
\section{Leibnizian Representation of  Hessenbergians}	\label{sec: Hessenbergians}
In all that follows the set $\integers$ (resp. $\integers_{a}$) stands for the set of integers (resp. the set of integers greater than or equal to $a\in\integers$) and $\complex$ for the algebraic field of complex numbers. The group of permutations on $\{1,2,\dots ,k\}$ is denoted by $\SG_k$ and the signature, $sgn(\per)$, of $\per\in \SG_k$ is assigned to $-1$ if $\per$ is an odd permutation of $\SG_k$ and $+1$ if $\per$ is an even one. The building blocks of the well known  
Leibnizian determinant expansion for the $k$th order square matrix $\A=[a_{i,j}]_{1\le i,j \le k}$ over $\complex$, that is
\begin{equation} \label{Leibniz formula} \det(\A)=\sum_{\per\in \SG_k} sgn(\per)\prod_{i=1}^{k}a_{i,\per_i},
\end{equation}
are the singed elementary products, which can be formally defined as follows:
Let  $\per\in \SG_k$. A signed elementary product (SEP) of a square matrix $\A=[a_{i,j}]_{1\le i,j \le k}$ over $\complex$ is an ordered pair 
$\displaystyle(\per,sgn(\per)\prod_{i=1}^{k}a_{i,\per_i})$ in $\SG_k\times\complex$, where the second component of the ordered pair is the numerical value of the SEP in $\complex$. We infer that two SEPs of $\A$, say  $(\per,sgn(\per)\prod_{i=1}^{k}a_{i,\per_i})$ and  $(l,sgn(l)\prod_{i=1}^{k}a_{i,l_i})$,  are equal if and only if $\per=l$.
In all that follows we shall use the standard notation of SEPs: 	$sgn(\per)a_{1,\per_1}a_{2,\per_2}...a_{k,\per_k},\ \ \per\in \SG_k$.
The set of SEPs associated with $\A$ will be denoted here as $\SEPs_{\A}$. As a consequence of the above discussion, every SEP in $\SEPs_{\A}$ is associated with a permutation $\per$ up to the bijection:
\begin{equation}\label{bijective of SEPs}
\SG_k\ni \per\mapsto sgn(\per)a_{1,\per_1}\dots a_{k,\per_k}\in\SEPs_{\A}.
\end{equation}
It follows from the bijection in (\ref{bijective of SEPs}) that the number of distinct SEPs in $\SEPs_{\A}$ is $k!$, since  $\card(\SG_k)=k!$.
	
The $k$th order lower Hessenberg matrix $\Hes_k=[h_{i,j}]_{1 \le i,j\le k}$  and the infinite order lower Hessenberg matrix $\Hes=[h_{i,j}]_{i,j\ge 1}$ over $\complex$ are both satisfy the defining condition $h_{i,j}=0$, whenever $j-i>1$, and displayed below:
\begin{equation} \label{Hessenberg Matrix}
\Hes_k=\left[\begin{array}{cccccc}
h_{1,1}      &  h_{1,2}   &   0        & ... &   0      & 0 \\	h_{2,1}      &  h_{2,2}   &   h_{2,3}  & ... &   0      & 0  \vspace{-0.05in}\\
\vdots     &  \vdots  &    \vdots        & \vdots\vdots\vdots &   \vdots           & \vdots  \vspace{-0.05in}\\
h_{k-1, 1} & h_{k-1,2}& c_{k-1,3} & ... & h_{k-1, k-1} & h_{k-1, k} \\
h_{k,1}      &  h_{k,2}   & h_{k,3}     & ... & h_{k, k-1}     & h_{k, k} \end{array}\right],\ \ \ \Hes=\left[\begin{array}{ccccc}
h_{1,1}      &  h_{1,2}   &   0    &   0     &   ...  \\
h_{2,1}      &  h_{2,2}   &   h_{2,3} &   0 &    ...  \\
h_{3,1}      &  h_{3,2}   &   h_{3,3} &   h_{3,4} &   ...  \vspace{-0.05in}\\ 
\vdots    &  \vdots    &   \vdots  &   \vdots  &       
\end{array}\right].
\end{equation}
From here onwards a matrix $\Hes_k$ is considered as a term of the infinite chain of lower Hessenberg matrices  $\Hes_1\sqsubset\Hes_2\sqsubset\dots \sqsubset\Hes_k\sqsubset\dots \sqsubset\Hes$,
where the notation $\Hes_k\sqsubset\Hes$ means that $\Hes_k$ is a top submatrix of $\Hes$ consisting of the first $k$ rows and columns of $\Hes$. The determinant of $\Hes_k$ for $k\ge 1$ satisfies the well known recurrence\vspace{-0.05in}
\begin{equation} \label{Hessenbergian recurrence}
\det(\Hes_k)=h_{k,k}\det(\Hes_{k-1})+\sum_{i=1}^{k-1}(-1)^{k-i}h_{k,i}\prod_{j=i}^{k-1}h_{j,j+1}\det(\Hes_{i-1}),
\end{equation}
where $\det(\Hes_0)=1$ and $\det(\Hes_1)=h_{1,1}$ (for a proof of the recurrence formula in eq.  (\ref{Hessenbergian recurrence}) see \cite{NaFd02}).
	
The zero valued entries of $\Hes_k$ positioned above the superdiagonal, that is the entries $h_{ij}$  whose indices satisfy $j-i>1$, will be called \emph{trivial}, while the remaining entries of $\Hes_k$, including the entries of the superdiagonal, will be called \emph{non-trivial}. A SEP of $\det(\Hes_k)$ will be called \emph{trivial} if it contains at least one trivial entry. Otherwise it is called \emph{non-trivial}.
Throughout this paper the set of distinct  non-trivial SEPs associated with $\det(\Hes_k)$ is denoted by $\NSEPs_{k}$. 
If $i,j\in\integers$, we adopt the integer interval notation: $\lbracket i,j\rbracket\stackrel{{\rm def}}{=}[i,j]\cap\integers$ and   $\I_{k-1}\stackrel{{\rm def}}{=}\lbracket 0,2^{k-1}-1\rbracket$.
\subsection{Non-trivial SEPs and their String Structure}
The non-trivial entries $h_{i,j},\ j\le i$, positioned below and including the main diagonal of $\Hes$, will be called  \emph{standard factors}, while the sign-opposite entries of the super-diagonal, i.e. the entries $-h_{i,i+1}$, will be called \emph{non-standard factors}. By assigning $c_{i,j}=h_{i,j}$, whenever $j\not=i+1$, and $c_{i,i+1}=-h_{i,i+1}$ the matrices in eqs. (\ref{Hessenberg Matrix}) take the form:
\begin{equation} \label{Modified Hessenberg Matrix}
\Hes_k\!=\!\left[\!\!\begin{array}{cccccc}
c_{1,1}    \!\!  & -c_{1,2}  \!\!  &  0     \!\!\!   & ... \!\!\! &   0           & 0  \\
c_{2,1}    \!\!  &  \ \  c_{2,2}  \!\!  &  -c_{2,3} \!\!\! & ...\!\!\!  &   0        & 0  \vspace{-0.05in}\\
\vdots       \!\!   &  \vdots        & \!\!  \vdots     \!\!\!    &\vdots\vdots\vdots \!\!\! &    \vdots        \!\!   & \vdots  \vspace{-0.02in}\\	c_{k-1, 1} \!\!& c_{k-1,2}\!\! & c_{k-1,3}\!\! & ...\!\!\! &\ \ c_{k-1, k-1}\!\!  &-c_{k-1, k} \\
c_{k,1}   \!\!    & c_{k,2}\!\!   & c_{k,3} \!\!\!   &...\!\!\!  & c_{k, k-1}  \!\!  &c_{k, k}
\end{array}\right],\ \ \Hes\!=\!\left[\!\!\begin{array}{ccccc}
c_{1,1}      &  -c_{1,2}   &    0    &\!\!    0       &  ... \\
c_{2,1}       &   c_{2,2}   &   -c_{2,3} &\!\!      0  &  ...  \\
c_{3,1}      &   c_{3,2}   &    c_{3,3} &\!\!     -c_{3,4}  &  ...  \vspace{-0.03in}\\ 
\vdots    &  \vdots    &    \vdots  &   \vdots   &   
\end{array}\hspace{-0.01in}\!\!\right].
\end{equation}
Writing the Hessenbergian recurrence in eq. (\ref{Hessenbergian recurrence})  in terms of the entries of $\Hes_k$ in eq. (\ref{Modified Hessenberg Matrix}), after some algebraic manipulations (see for details Proposition \ref{positively signed} (i) in Appendix) can be equivalently rewritten as 
\begin{equation}\label{modified recurrence1}
\det(\Hes_k)=\displaystyle c_{k,k}\det(\Hes_{k-1})+\sum_{j=1}^{k-1}\prod_{i=j}^{k-1}c_{k,j}c_{i,i+1}\det(\Hes_{j-1}).
\end{equation}
The recurrence in eq. (\ref{modified recurrence1}) consists exclusively of positively singed non-trivial SEPs,  which gives a comparative advantage to the Hessenberg matrix forms in eqs. (\ref{Modified Hessenberg Matrix}) over those in eqs. (\ref{Hessenberg Matrix}), respectively.
The recurrence of $\det(\Hes_k)$ in eq. (\ref{modified recurrence1}) can be written in an expanded form as:
\begin{align}\label{modified recurrence2}
\det(\Hes_k)=&c_{1,2}c_{2,3}\dots c_{k-1, k}c_{k,1}\det(\Hes_0)+c_{2,3}c_{3,4}\dots c_{k-1, k}c_{k,2}\det(\Hes_1)+ c_{3,4}c_{3,5}\dots c_{k-1, k}c_{k,3}\det(\Hes_{2})+... +\notag\\
&c_{k-1, k}c_{k, k-1}\det(\Hes_{k-2})+c_{k,k}\det(\Hes_{k-1}). \end{align} 
We show in Proposition \ref{positively signed} (ii) of Appendix \ref{Appendix: ProofsOfPropositions}, that the total number of distinct non-trivial SEPs involved in the recurrence (\ref{modified recurrence2}) and therefore in the recurrences in eqs. (\ref{Hessenbergian recurrence}) and (\ref{modified recurrence1}), is $\card(\NSEPs_{k})=2^{k-1}$. This number is considerably less than the number $k!$ of distinct SEPs involved in the Leibniz' determinant expansion in eq. (\ref{Leibniz formula}).
	
Next, we define the notion of  \textit{strings}\footnote{The analysis of non-trivial SEPs of Hessenbergians via strings, defined on Hessenberg matrices of infinite order, was first introduced by the authors in \cite{PaKa14}.} associated with an infinite order Hessenberg matrix $\Hes$ in eq. (\ref{Hessenberg Matrix}) or (\ref{Modified Hessenberg Matrix}). Informally speaking, strings are pieces of non-trivial SEPs and pieces of strings are also strings. A formal Definition is given below:
\begin{definition}[Strings]\label{def.strings}
		A finite product of consecutive factors associated with $\Hes$ is said to be a string, denoted by $C[j,m;\per]=c_{j,\per_j}c_{j+1,\per_{j+1}}\dots c_{m,\per_m}$, if there is some $k\in\integers_{1}$ and a non-trivial SEP, say $C\in\NSEPs_{k}$,\vspace{0.05in} such that $C$ includes $C[j,m;\per]$, that is:
		$$\hspace{-1in}\begin{array}{lcl} C= c_{1,\per_1}\dots \!\!\!&\underbrace{ c_{j,\per_j}\dots c_{m,\per_m}}&\!\!\!\dots c_{k,\per_k}. \\
		& C[j,m;\per] &
		\end{array}$$\\
		An initial string determined by $m$ and $\per$ is defined by $C[1,m;\per]$ and is shortly denoted as $C[m;\per]$.
	\end{definition} 
	Formally the string $C[j,m;\per]$ is uniquely determined by $(j,m,\per)\in\integers^2_1\times\SG_k$ for any $k\ge m$. Let $C[j,m;\per]$ be a string. If $j=m$, then $C[m,m;\per]=c_{m,\per_m}$.  If $i\ge j$ and $n\le m$, then Definition \ref{def.strings} implies that $C[i,n;\per]$ is also a string, since it is included in the same SEP as $C[j,m;\per]$. Accordingly,  $C[i,n;\per]$ might be said to be a \textit{substring} of $C[j,m;\per]$.  Adopting the convention $c_{0,0}=h_{0,0}=1$, the class of all initial strings determined by $i\ge 0$ is denoted by $\ISCs[i]$. The first three classes of initial strings are: $\ISCs[0]=\{c_{0,0}\}$,  $\ISCs[1]=\{c_{1,1}, c_{1,2}\}$ and $\ISCs[2]=\{c_{1,1}c_{2,2}, c_{1,1}c_{2,3}, c_{1,2}c_{2,1}, c_{1,2}c_{2,3}\}$. The major difference between initial strings and non-trivial SEPs is demonstrated by the following example: The initial string $c_{1,1}c_{2,3}$ is included in the non-trivial SEP $c_{1,1}c_{2,3}c_{3,2}$, but the string under discussion is not a SEP. On the other hand, every non-trivial SEP is an initial string, since every SEP is included in itself.  As a consequence,  every string is included in an initial string.
	
	A non-trivial element (or factor) $c_{i,m}$ of $\Hes$ is said to be an \emph{immediate successor} (IS) of the initial string $C[i-1;\per]=c_{1,\per_1}\dots c_{i-1,\per_{i-1}}$, whenever $c_{1,\per_1}\dots c_{i-1,\per_{i-1}}c_{i,m}$ is an initial string too. For instance, the immediate successors of $c_{1,1}$ are $c_{2,2}$ and $c_{2,3}$. 
	Some elementary properties of strings, as arrays of standard and non-standard factors, are summarized below:
	\begin{proposition}[Properties of Strings]\label{properties} $\mbox{}$\\
		1) Every non-trivial entry $c_{i,j}$ of\ \  $\Hes$ in eq. {\rm (\ref{Modified Hessenberg Matrix})} is an IS of some initial string.\\
		2) Every initial string of $\Hes$ has two ISs. One of these is non-standard (therefore the other must be standard).\\
		3) Let $c_{i-1,\per_{i-1}}$ be any standard factor of an initial string, $C[k;\per]$ for some $k\ge i-1$. Then the only possible ISs  of the initial string $C[i-1;\per]$ are  $c_{i,i}$ and  $c_{i,i+1}$.\\
		4) Let $c_{i,\per_i}$ be the standard IS of the initial string $C[i-1;\per]$. If the number of consecutive non-standard predecessors of $c_{i,\per_i}$ is $m$, then $\per_i=i-m$.
	\end{proposition}
	\begin{proof}
		1) It follows directly from the recurrence in eq. (\ref{modified recurrence2}). 
		2) Trivially, the  ISs of $c_{1,1}$ are $c_{2,2}$ and $c_{2,3}$ (standard and non-standard, respectively). An initial string, say $C[i-1;\per]= c_{1,\per_1}\dots c_{i-1,\per_{i-1}}$ for $i\ge 1$ of $\Hes$ has only two ISs, since there are $(i+1)$ candidates, that is the number of the non-trivial elements of the $i$th row of $\Hes$, minus the number $(i-1)$ of the preceding factors, whose column indices have already occurred in the string. Since the column index $(i+1)$ has not previously used by preceding factors, we conclude that one of these ISs is the non-standard factor $c_{i,i+1}$, whence the other must be  standard. 
		3) It suffices to show that the standard IS of $C[i-1;\per]$ is $c_{i,i}$, provided that $c_{i-1,\per_{i-1}}$ is standard, that  is to show that $c_{m,i}$ for any  $m=1,2...,i-1$ is not a factor of this string. If $i-m>1$ (or $i-m\ge 2$), then $c_{m,i}$ is a trivial entry, because $c_{i-2,i},c_{i-3,i},...,c_{1,i}$ are all trivial entries, and therefore are not factors of the string. Moreover, the non-standard factor $c_{i-1,i}$ is not a factor of the string, since, by hypothesis, $c_{i-1,\per_{i-1}}$ is standard.
		Thus, the only available (not previously occurred) standard IS of $c_{i-1,\per_{i-1}}$ is $c_{i,i}$, as required. 4) Property 4 is a generalization of Property 3 and it is shown in Proposition \ref{standard IS} of the Appendix \ref{Appendix: ProofsOfPropositions} (see also the tree diagram below). 
	\end{proof}
	As a consequence of Property 2, the class $\ISCs[k]$ consists of $2^k$ initial strings, since  $\card(\ISCs[k])=2\card(\NSEPs_{k})$.
	We remark that Property 3 follows directly from Property 4, since the number of the non-standard factors between two successive standard ones in any initial string is $m=0$.
	Property 4 can be used to identify the ISs, say $c_{i,\per_{i}}$, of any initial string, by using the following method:  
	\paragraph{Method.}
	Let $C[i-1,\per]=c_{1,\per_{1}}c_{2,\per_{2}}...c_{i-1,\per_{i-1}}$ be an initial string. If $c_{i,\per_i}$ is the  non-standard IS of $C[i-1,\per]$, then $\per_i=i+1$ and $c_{i,\per_i}=c_{i,i+1}$. If $c_{i,\per_i}$ is the standard the IS of $C[i-1,\per]$, then, in order to identify it, we need to count all consecutive non-standard predecessors of $c_{i,\per_i}$. Call this number $m$ ($m$ could be: $0,1,...,i-1$). As $c_{i-m-1,\per_{i-m-1}}$ is a standard factor,  Property 4 entails that $\per_i=i-m$ and the standard IS of $C$ is: $c_{i,\per_i}=c_{i,i-m}$.
	\paragraph{Examples:} 1) Property 3 entails that the ISs of the string  $c_{1,1}c_{2,2}$ are  $c_{3,4}$ (non-standard) and $c_{3,3}$ (standard). In particular, $c_{3,3}$ is the standard IS of the string, since $c_{2,2}$ is standard. These are also the ISs of the string $c_{1,2}c_{2,1}$, since $c_{2,1}$ is standard. 2) The ISs of the string  $c_{1,1}c_{2,3}$  are  $c_{3,4}$ (non-standard) and $c_{3,2}$ (standard). To see how the latter follows from Property 4, denote the standard IS of the string $c_{1,1}c_{2,3}$ as $c_{3,\per_3}$. Since $c_{1,1}$ is standard and $c_{2,3}$ is non-standard, $c_{3,\per_3}$ has only one standard predecessor, whence $m=1$ and $c_{3,\per_3}=c_{3,3-1}=c_{3,2}$.
	
	Properties 2, 3 and 4 can be visualised by a tree diagram:
	
	\bigskip
	
	\hspace{1in}\begin{tikzpicture}
	[sibling distance=0pt]
	\tikzset{level distance=40pt}
	\tikzset{level 3/.style={sibling distance=0pt}}
	\tikzset{level 3/.style={level distance=50pt}}
	\tikzset{level 4/.style={sibling distance=10pt}}
	\tikzset{every tree node/.style={align=center,anchor=north}}
	\Tree [.\node[draw]{$c_{i-m-1,\per_{i-m-1}}$\\(standard)}; $c_{i-m,i-m}$\\(standard)
	[.$c_{i-m,i-m+1}$\\(non-standard)
	$c_{i-m+1,i-m}$\\(standard) [.$c_{i-m+1,i-m+2}$\\(non-standard)\vspace{-0.07in}\\{\vdots} $c_{i-1,i-m}$\\(standard) [.$c_{i-1,i}$\\(non-standard)
	$c_{i,i+1}$\\(non-standard) \node[draw]{$c_{i,i-m}$\\(standard)};  ] ] ] ] ]
	\end{tikzpicture}
	\subsection{Leibnizian Representation of  Hessenbergians} \label{TheHessenbergianRepresentationFormula}
	The indexing function $\sigma_{k,i}$, yielding the Leibnizian representation of a Hessenbergian, $|\Hes_{k}|$ (see eq. (\ref{compact-form})), is defined as a composite of two  functions: The outer  function $\z_{k,i}$ and the inner function $\tau_k$. These functions are established in terms of integer functions in the next two paragraphs of the present Subsection.
	\paragraph{Outer Function}\mbox{}\\
	In what follows, $\R_k$ will stand for the set of $k$-arrays $\br=(r_1,r_2,\dots ,r_k)$ with components either $r_i=0 \ {\rm or} \ 1$ for $1\le i\le k-1$ and $r_k=1$. Trivially $\R_1=(1)$ and the number of the elements in $\R_k$ is  $\card(\R_k)=2^{k-1}$. The outer building function, constructed below,  considerably reduces the number of SEPs in the Leibniz determinant formula of Hessenbergians, built up solely of the non-trivial entries $c_{i,\z_{k,i}(\br)}$ of $\Hes_{k}$ for $\br\in\R_k$ (see eq. (\ref{preclosed-form 3})).
	
	We introduce the function $f_k$, which maps every $C=c_{1,\per_1}c_{2,\per_2}\dots c_{k,\per_k}\in{\mathcal{E}_k}$, to $\br=(r_1,r_2,\dots ,r_{k-1},1)\in \R_k$, according to the rule: $r_i=0$, whenever $c_{i,\per_i}$ is non-standard and $r_i=1$, whenever $c_{i,\per_i}$ is standard. Since the last factor of a non-trivial SEP is always standard, the last component of the array $\br$ has been assigned to $1$, that is $r_k=1$.  As shown in Proposition \ref{identification theorem} of the Appendix, the above rule induces the bijective mapping: \begin{equation*}\label{bijective f}
	f_k:\NSEPs_{k}\ni C \mapsto f_k(C)\in \R_k
	\end{equation*}  
	For example a non-trivial SEP, say $C=c_{1,1}c_{2,3}c_{3,2}c_{4,5}c_{5,6}c_{6,4}c_{7,7}$ is mapped uniquely to the array $f_7(C)=(1,0,1,0,0,1,1)$ and vice versa. 
	In order to verify that $f_7^{-1}(1,0,1,0,0,1,1)=C$, we first assign  the $0$s, which are positioned at $i=2,4,5$, to the non-standard factors having the same positions in $C$, that is:  $c_{2,3},c_{4,5}, c_{5,6}$. By virtue of Property 4 in Proposition \ref{properties}, we assign the $1$s to corresponding standard factors as follows: The first $1$ is mapped to $c_{1,1}$, as $m=0$. The second $1$ is assigned to $c_{3,3-1}$, since there is only one $0$ between the first and third $1$s, that is $m=1$. Similarly, the third $1$ must be assigned to $c_{6,6-2}$, since $m=2$. The last $1$ is assigned to $c_{7,7}$, since $m=0$ and the verification is completed.
	
	More generally speaking, arrays in $\R_k$ represent non-trivial SEPs in $\NSEPs_{k}$, preserving their string structure, as arrays of standard and non-standard factors. 
		As a consequence we have:%\vspace{-0.02in}
	\begin{equation} \label{preclosed form 0}
	\det(\Hes_k)=\displaystyle\sum_{\br\in \R_k}f^{-1}_{k}(\br).\
	\end{equation}
	If $f_k^{-1}(\br)=c_{1,\per_1}c_{2,\per_2}\dots c_{i,\per_i}\dots c_{k,\per_k}\in\NSEPs_{k}$, we call $f^{-1}_{k,i}(\br)=c_{i,\per_i}$, that is the $i$th factor of the SEP, whence\vspace{-0.05in}
	\begin{equation*}\label{eq. ith factor}
	f^{-1}_k(\br)=\prod_{i=1}^kf^{-1}_{k,i}(\br).\vspace{-0.05in}
	\end{equation*}
	In what follows we assign $r_0=1$. It turns out that given $k,i\in\integers$ such that $1\le i\le k$, then for any  $\br\in \R_k$, we have:
	\begin{equation*} \label{def of varphi2} f^{-1}_{k,i}(\br)\!=\!\!\left\{\!\!\!\begin{array}{cll}   
	c_{i, i+1}, \!\!\!\!& {\rm if} \  r_i=0\ {\rm and}\ i\not= k \vspace{0.05in}\\
	c_{i, i-m}, \!\!\!\!&  {\rm if}\; r_{i-m-1}\!=\! r_i\!=\!1\ {\rm and}\
	r_{j}\!=\!0,\ {\rm whenever}\ j\in\integers: \ i-m\le j\le i-1.
	\end{array}\right.  
	\end{equation*}
	The above piecewise expression of $f^{-1}_{k,i}(\br)$ can be written in a single form as:
	\begin{equation}\label{single def varphi2}
	\begin{array}{cccccccl}
	f^{-1}_{k,i}(r_1,r_2,...,\!\!& r_{i-m-2},\!\! &  r_{i-m-1},   &\underbrace{0,\ 0,...,\ 0,} & r_{i} ,\ ...,\!\!&r_{k-1},1)&\!\!=\!\!& c_{i, i-m},   \ \ \  \  m=-1,0...,i-1.\\
	&&  & m &&& & \vspace{-0.1in}
	\end{array} 
	\end{equation}
	As $m$ is the number of successive $0$s between  $r_{i-m-1}=1$ and $r_i=1$,  the formula in eq. (\ref{single def varphi2}) gives: $f^{-1}_{k,i}(\br)=c_{i, i-m}$. If $r_{i}=0$, then $m$ is assigned to $m=-1$, that is the formula in eq. (\ref{single def varphi2}) ignores all the predecessors of $r_{i}$ and gives:
	$f^{-1}_{k,i}(\br)=c_{i, i-(-1)}=c_{i,i+1}$.
	Next, we consider two special cases: $i)$ Let  $r_{i-1}=r_i=1$. Then  $m=0$ and  $f^{-1}_{k,i}(\br)=c_{i,i}$, which is in accord with the fact  that $\{j\in\integers: i\le j\le i-1\}=\emptyset$ and $\card(\emptyset)=0$. $ii)$ Let $r_i=1$ and $m=i-1$. Then, on account of $r_{i-m-1}=r_{i-(i-1)-1}=r_0=1$, eq. (\ref{single def varphi2}) yields:
	\[ \begin{array}{lcl}f^{-1}_{k,i} (&\!\!\!\!\underbrace{0,0,\dots , 0,} &\!\!\!\! 1, r_{i+1}, \dots, r_{k-1},1)=c_{i,i-(i-1)}=c_{i, 1}.\\
	& i-1 &
	\end{array}\]
	
	In view of eq.  (\ref{eq. ith factor}), the  expression for Hessenbergians in  eq. (\ref{preclosed form 0}) can be rewritten as:
	\begin{equation} \label{preclosed form 1}
	\det(\Hes_k)=\displaystyle\sum_{\br\in \R_k}\prod_{i=1}^kf^{-1}_{k,i}(\br),
	\end{equation}
	which consists of $\card(\R_k)=\card(\NSEPs_{k})=2^{k-1}$ distinct non-trivial SEPs. 
	An equivalent expression to eq. (\ref{preclosed form 1}), but directly in terms of the entries $c_{i,j}$ of $\Hes_k$ in eq. (\ref{Modified Hessenberg Matrix}), is obtained in eq. (\ref{preclosed-form 3}) below. For this purpose, we introduce the function\vspace{-0.1in}
	\begin{equation} \label{def of zeta} \zeta_{k,i}(\br)\stackrel{{\rm def}}{=}\left\{\begin{array}{cll}   
	-1 & {\rm if} \  r_i=0\ {\rm and}\ i< k \vspace{0.05in}\\
	m &  {\rm if}\; r_{i-m-1}\!=\!r_i \!=\!1\ {\rm and}\
	r_{j}\!=\!0\ {\rm for\ all}\ j\in \lbracket i-m, i-1\rbracket,
	\end{array}\right. 
	\end{equation}
	which returns the number of consecutive $0$s preceding the component $r_i$  ($i\le k$) in an array $\br\in \R_k$.
	In Proposition \ref{unified formula} of the Appendix, we show a single formula, equivalent to the piecewise expression  of $\zeta_{k,i}$  in eq. (\ref{def of zeta}), which is additionally expressed in terms of elementary functions, as displayed below
	\begin{equation} \label{zeta in terms of elementary functions}
	\zeta_{k,i}  (\br)=r_i(i-\max_{0\le j<i}\{j\cdot r_j\})-1, \ \  i\le k,
	\end{equation}
	noting that: 
	$
	\max\{a_1,a_2\}=\displaystyle\frac{a_1+a_2+\sqrt{(a_1-a_2)^2}}{2}\ \ {\rm and}\ 
	\displaystyle\max_{1\le j< i}\{a_j\}=\max\{\max\{a_1,a_2\},a_3,\dots ,a_{i-1}\}
	$.\\
	Letting $i=1$ in eq. (\ref{zeta in terms of elementary functions}), it follows that $j=0$. Recalling the convention $r_0=1$, it follows from $\{0\cdot r_0\}=\{0\}$ that $\max\ \{0\}=0$, whence\vspace{-0.1in} \begin{equation}\label{special case i=1}
	\zeta_{k,1}(\br)=r_1(1-\displaystyle\max\{0\})-1=r_1(1-0)-1=\left\{\begin{array}{rl}
	-1 & {\rm if}\ r_1=0\\
	0  & {\rm if}\ r_1=1
	\end{array}\right. \ \  \text{for all} \ \ \br\in \R_k,
	\end{equation}
	which is in accord with the Definition in eq. (\ref{def of zeta}).
	Next, we define the function:
	\begin{equation}\label{j minus function of zeros count}
	\z_{k,i}(\br)\stackrel{{\rm def}}{=}i-\zeta_{k,i}(\br).
	\end{equation}
	It follows from eq. (\ref{single def varphi2}) that  $f^{-1}_{k,i}(\br)=c_{i,i-\zeta_{k,i}(\br)}$, whence:
	\begin{equation}\label{phi and z}
	f^{-1}_{k,i}(\br)=c_{i,\z_{k,i}(\br)}.
	\end{equation}
	As a demonstrative example, let  $\br=(1,0,\dots,0,1,1,0,0,1)\in\R_k$. Applying the Definition in eq. (\ref{def of zeta}), along with eqs. (\ref{j minus function of zeros count}) and  (\ref{phi and z}), we get: $\zeta_{k,1}(\br)=0$, $\z_{k,1}(\br)=1-0=1$ and $f^{-1}_{k,1}(\br)=c_{1,\, \z_{k,1}(\br)}=c_{1,1}$. As $r_{2}=r_{3}=...=r_{k-5}=r_{k-2}=r_{k-1}=0$,  we have: $\zeta_{k,2}(\br)=\zeta_{k,3}(\br)=...=\zeta_{k,k-5}(\br)=\zeta_{k,k-2}(\br)=\zeta_{k,k-1}(\br)=-1\,$. Thus,   
	$\z_{k,2}(\br)=2-(-1)=3,...,\z_{k,k-5}(\br)=k-5-(-1)=k-4,\ \z_{k,k-2}(\br)=k-2-(-1)=k-1,\ \z_{k,k-1}(\br)=k-1-(-1)=k$  and  $f^{-1}_{k,2}(\br)=c_{2,\, \z_{k,2}(\br)}=c_{2,3},...,f^{-1}_{k,k-5}(\br)=c_{k-5, \z_{k,k-5}(\br)}=c_{k-5,k-4},\, f^{-1}_{k,k-2}(\br)=c_{k-2, \z_{k,k-2}(\br)}=c_{k-2,k-1},\, f^{-1}_{k,k-1}(\br)=c_{k-1,\, \z_{k,k-1}(\br)}=c_{k-1,k}$. 
	As $r_{k-3}=r_{k-4}=1$, we conclude that the  number of preceding consecutive $0$s of $r_{k-3}$ is zero, whence: $\zeta_{k,k-3}(\br)=0$, 
	$\z_{k,k-3}(\br)=k-3-0=k-3$, and  $f^{-1}_{k,k-3}(\br)=c_{k-3,\z_{k,k-3}}(\br)=c_{k-3,k-3}$. Moreover, as the number of preceding consecutive $0$s of $r_{k-4}$ is $k-6$, we conclude that: $\zeta_{k,k-4}(\br)=k-6$, 
	$\z_{k,k-4}(\br)=k-4-(k-6)=2$, and  $f^{-1}_{k,k-4}(\br)=c_{k-4,\z_{k,k-4}}(\br)=c_{k-4,2}$.
	Finally, as the number of preceding consecutive $0$s of $r_{k}=1$ is 2 we have:
	$\zeta_{k,k}(\br)=2$, $\z_{k,k}(\br)=k-2$ and 
	$f^{-1}_{k,k}(\br)=c_{k,\, \z_{k,k}(\br)}=c_{k,k-2}$. Hence,
	$$f^{-1}_{k}(1,0,\dots,0,1,1,0,0,1)=c_{1,1}c_{2,3}...c_{k-5,k-4}c_{k-4,2}c_{k-3,k-3}c_{k-2,k-1}c_{k-1,k}c_{k,k-2}.$$
	Taking into account eq. (\ref{phi and z})  the formula in eq. (\ref{preclosed form 1}) can be expressed as
	\begin{equation} \label{preclosed-form 3}
	\det(\Hes_k)=\displaystyle\sum_{\br\in \R_k}\prod_{i=1}^kc_{i,\, \z_{k,i}(\br)},
	\end{equation}
	that is an expression of $\det(\Hes_k)$ solely in terms of non-trivial entries of $\Hes_k$ in (\ref{Modified Hessenberg Matrix}). 
	%Notice that for each  $\br\in \R_k$ $f^{-1}_{k}(\br)=\prod_{i=1}^kc_{i,\, \z_{k,i}(\br)}$, that is a non-trivial SEP in $\NSEPs_{k}$.  and the summation is quantified over the integer interval $\I_{k-1}=\lbracket 0,2^{k-1}-1\rbracket$.
	\paragraph{Inner Function}\mbox{}\\
	The inner  function $\tau_k$ is employed to convert integers from $\I_{k-1}$ to arrays in $\R_k$. This makes it possible to replace the indexing set $\R_k$ in eq. (\ref{preclosed-form 3}) with the integer indexing set $\I_{k-1}=\lbracket 0,2^{k-1}-1\rbracket$.
	
	In what follows we shall make use of the conventional notation 
	\[\begin{array}{lcccc} \1_k=&\underbrace{11...1}& (k\ {\rm number\ of} \ 1s),\ \ \ \ \ \0_k=&\underbrace{00...0}& (k\ {\rm number\ of} \ 0s) \\
	& k & &k &\end{array}\]
	that is, $\1_k$  (resp. $\0_k$) represents a binary integer consisting of $k$ consecutive $1$s (resp. $0$s).
	Let ${\mathcal{B}}_k$ be the set of binary integers from $0$ up to and including $\1_k$, that is ${\mathcal{B}}_k=\{0,1,10,...,\1_k\}$. The binary representation of the decimal integer $2^k$ is the binary integer $10^k$, and we write:
	$[2^k]_2=10^k\in{\mathcal{B}}_k$. Thus, $[2^k-1]_2=\1_k$ and ${\mathcal{B}}_k$ consists of $2^k$ binary integers.
	
	In the rest of this paper $\2^k$ will stand for the set of functions from  $\{1,2,...,k\}$ to $\{0,1\}$, that is the set of all $k$-arrays of $0$s and $1$s: $\2^k=\{(r_1,r_2,...,r_{k-1},r_k): r_i=0\ {\rm or}\ 1\}$. 
	The set $\2^k$ can be naturally  identified with the segment of the non-negative binary integers up to and including the integer $\1_k$, that is the sets $\2^k$ and  ${\mathcal{B}}_k$ are identified up to a bijection as follows: 	Let $b\in {\mathcal{B}}_k$. If $b=0$, then   ${\mathcal{B}}_k\ni 0 \equiv \0_k\in\2^k$. If $b\not= 0$, we can write  $b=1\, r_{i+1}...r_n...r_{k}$, where $r_n$ is $0\ \text{or}\ 1$ for $i+1\le n\le k$.  
	By adding $(i-1)$ zero digits to the left side of $b$, the latter is represented as
	\[\begin{array}{clll}1\, r_{i+1}...r_{k}\equiv & 0\ 0\ ...\ 0 & 1\ r_{i+1}...& r_{k} \\ 
	&\uparrow  &\uparrow  &\uparrow \\
	{\rm Binary \ Figures:}& 2^{k-1}  & 2^{k-i}  & 2^0\ {\rm (units)} \end{array}\]
	and $b\in{\mathcal{B}}_k$ turns into an element of $\2^k$ by identifying:  ${\mathcal{B}}_k\ni b \equiv (0,0,...,1,r_{i+1},...,r_{k})\in\2^k$. 
	For example, let $1011\in{\mathcal{B}}_5$. Adding one zero to the left side of $1011$, the binary $1011$ is identified with the array $(0,1,0,1,1)\in\2^5$. 
	
	The set $\mathfrak{R}_k$ can be defined as the subset of $\2^k$ consisting of the elements of $\2^k$ whose last component is $r_k=1$, that is:
	$\mathfrak{R}_k=\{\br\in \2^k: r_k=1\}$. Evidently $\card(\2^k)=2^k$ and $\card(\mathfrak{R}_k)=2^{k-1}$. 
	Taking into account that $\card({\mathcal{B}}_{k-1})=\card(\mathfrak{R}_k)=2^{k-1}$, we define the bijection $\rho_k: {\mathcal{B}}_{k-1}\mapsto \mathfrak{R}_k$:
	\begin{equation} \label{bijection rho} \begin{array}{cccccccccc}
	\rho_k\!\!\!\!\!\!&(\underbrace{00...01r_{i+1}...r_{k-1}})\!\!\!\!&=\!\!\!\!&(\underbrace{0,0,...,0,1,r_{i+1},...,r_{k-1},1})&  {\rm and} &
	\rho_k\!\!\!\!\!\!&(\underbrace{00...0})\!\!\!\!&=\!\!\!\!&(\underbrace{0,0,...,0,1})& \\
	&  k-1  & &  k   & & &  k-1  & &  k  \end{array} \end{equation}
	By identifying ${\mathcal{B}}_{k-1}$ with $\mathfrak{R}_k$ via $\rho_k$, the function $f_k^{-1}$, defined by  (\ref{bijective f}), associates every binary integer $\br$ in ${\mathcal{B}}_{k-1}$ with the SEP $f_k^{-1}(\br)$ in one-to-one fashion. %In what follows we shall use the identification ${\mathcal{B}}_{k-1}\equiv\mathfrak{R}_k$.
	
	Let $x\in\integers_{0}$ and $y\in\integers_{1}$. The largest integer less than or equal to the rational number $\displaystyle x/y$ is usually denoted as  $\lfloor x/y \rfloor$, where ``$\lfloor \, \rfloor$" is the floor function.
	Therefore, $\lfloor x/y \rfloor$ coincides the quotient of the Euclidean division of $x$ by $y$. Moreover, $x \!\!\!\mod\! y$ stands for the remainder of the division of $x$ by $y$.  Thus
	$n \!\!\mod 2=\left\{\begin{array}{ll} 0 & \text{if}\ n\ {\rm is\ even}\\ 1  & \text{if}\ n\ {\rm is\ odd}\end{array}\right.\!\! $. If $x-y=nk$ for some $n\in\integers$, then 
	 $x,y$ is said to be congruent modulo $k$, that is $x,y$ have the same remainder when divided by $k$ and we write for it: $x \equiv y\!\! \mod k$. 

The conventional method for converting decimal integers into binary is based on the Euclidean division, which is applied here for converting a decimal integer $n\in \I_{k-1}$ for $k\ge 2$ into a binary $[n]_2=r_1r_2...r_{k-1}\in{\mathcal{B}}_{k-1}$. The digits $r_i$ ($0$ or $1$) in $[n]_2$ turns out to be the remainders of the following nested divisions:
\[n=2q_{k-1}+r_{k-1}, q_{k-1}=2q_{k-2}+r_{k-2}, ..., q_{2}=2q_{1}+r_1.\]
Taking into account that 
\[\begin{array}{ccc}q_{k-1}=\lfloor n:2\rfloor,\ q_{k-2}=\lfloor\lfloor n:2\rfloor:2\rfloor, ..., q_1=\lfloor\lfloor...\lfloor\lfloor n:\!\!\!\!\!&\underbrace{2\rfloor:2\rfloor...\rfloor :2}&\!\!\!\!\rfloor,\\
\!\!\!\!\!&            k-1       &  \end{array} \]
the $r_i$s in $[n]_2=r_1r_2...r_{k-1}$ can be expressed in terms of the floor function  and of the modulo function as:
\[
\begin{array}{ccc} r_{k-1}= n\!\!\!\!\! \mod 2,\ r_{k-2}=\lfloor n\!:\!2\rfloor\!\!\!\!\! \mod 2,..., r_1=\lfloor\lfloor...\lfloor\lfloor n\!:\!\!\!\!\!&\underbrace{2\rfloor:\!2\rfloor...\rfloor \!:\!2}&\!\!\!\!\rfloor\!\!\!\!\!  \mod 2.\\
\!\!\!\!\!&            k-2       & \vspace{-0.1in} \end{array}\vspace{-0.1in}\]
We can also write $r_{k-1}=\lfloor n:2^0\rfloor\!\! \mod 2$, yielding a unified expression 
\begin{equation*}\label{unified expression of ri}
\begin{array}{ccc} r_{i}=\lfloor\lfloor...\lfloor\lfloor n:\!\!\!\!\!&\underbrace{2\rfloor:2\rfloor...\rfloor :2}&\!\!\!\!\rfloor\!\!\!\! \mod 2 \ \ \ \ \text{for}\ 1\le i\le k-1.\\
\!\!\!\!\!&            k-i-1        &\vspace{-0.05in}
\end{array}
\end{equation*}
Thanks to the identity\vspace{-0.05in}	
\begin{equation*}\label{nested divisions} 
\begin{array}{ccc} \lfloor\lfloor...\lfloor\lfloor n:\!\!\!&\underbrace{2\rfloor:2\rfloor...\rfloor :2}&\!\!\!\!\rfloor=\lfloor n:2^m\rfloor\\                        &   m                   & 
\end{array}\end{equation*}
(see for a proof Proposition \ref{nestdiv} in the Appendix), the binary representation  $[n]_2=r_1r_2...r_{k-1}\in{\mathcal{B}}_{k-1}$ of  $n\in \I_{k-1}$ can be expressed as:\vspace{-0.1in}
	\begin{equation*} \label{binary equvalent}
	\begin{array}{ccccc} [n]_2=&\underbrace{\lfloor n\!:\!2^{k-2}\rfloor\!\!\!\!\!\mod\! 2}& \underbrace{\lfloor n\!:\!2^{k-3}\rfloor\!\!\!\!\!\mod\! 2}& ...&\underbrace{\lfloor n\!:\!2^0\rfloor\!\!\!\!\!\mod\! 2}. \\
	& r_1 & r_2 &  & r_{k-1} \end{array}\end{equation*}
This induces the following bijective transformation:
	\begin{equation*} \label{beta} 
	\beta_k: \I_{k-1}\ni n \mapsto \beta_k(n)=[n]_2\in {\mathcal{B}}_{k-1}.\end{equation*}
	The composite $\tau_{k}\stackrel{{\rm def}}{=}\rho_k\circ \beta_k$ determines a bijection, which converts decimal integers in $\I_{k-1}$ into arrays in $\mathfrak{R}_k$, that is 
	\begin{equation} \label{def tau}
	\tau_k(n)=(\lfloor n\! :\! 2^{k-2}\rfloor\!\!\!\!\!\mod 2,\lfloor n\! :\! 2^{k-3}\rfloor\!\!\!\!\!\mod 2,\dots ,\lfloor n\! :\! 2^{0}\rfloor\!\!\!\!\!\mod 2,1), \ \ \  n\in \I_{k-1}.   \end{equation}
	Some illustrative examples are given below:\\
	
	If $k=1$, then $\tau_1: \I_0\ \mapsto \mathfrak{R}_1$ is defined by: $\tau_1(0)=(1)$.
	
	If $k=2$, then:
	\[\begin{array}{lll}
	\tau_{2}(0)&=&(\lfloor 0: 2^{2-2}\rfloor\!\!\!\!\mod 2,1)=(0\!\!\!\!\mod 2,1 )=(0,1)\\
	\tau_{2}(1)&=&(\lfloor 1: 2^{2-2}\rfloor\!\!\!\!\mod 2,1)=(1\!\!\!\!\mod 2,1 )=(1,1).
	\end{array}\]
	
	If $k=3$, then:
	\[\begin{array}{llccc}
	\tau_{3}(0)\!\!\!\!&=(&\!\!\!\lfloor 0: 2^{3-2}\rfloor\!\!\!\!\mod 2,&\lfloor 0: 2^{3-3}\rfloor\!\!\!\!\mod 2,&1)\\
	&=(&\!\!\! 0\!\! \mod 2,&\ 0\!\! \mod 2,& 1)\\
	&=(&\!\!\!0,&0,&1) \end{array}
	\left|\left| \begin{array}{llccc}
	\tau_{3}(1)\!\!\!\!&=(&\!\!\!\lfloor 1: 2^{3-2}\rfloor\!\!\!\!\mod 2,&\lfloor 1: 2^{3-3}\rfloor\!\!\!\!\mod 2,&1)\\
	&=(&\!\!\! 0\!\! \mod 2,&\ 1\!\! \mod 2,& 1)\\
	&=(&\!\!\!0,&1,&1)\end{array}\right.\right.\]
	
	\[\begin{array}{llccc}
	\tau_{3}(2)\!\!\!\!&=(&\!\!\!\lfloor 2: 2^{3-2}\rfloor\!\!\!\!\mod 2,&\lfloor 2: 2^{3-3}\rfloor\!\!\!\!\mod 2,&1)\\
	&=(&\!\!\! 1\!\! \mod 2,&\ 2\!\! \mod 2,& 1)\\
	&=(&\!\!\!1,&0,&1) \end{array}\left|\left| \begin{array}{llccc}
	\tau_{3}(3)\!\!\!\!&=(&\!\!\!\lfloor 3: 2^{3-2}\rfloor\!\!\!\!\mod 2,&\lfloor 3: 2^{3-3}\rfloor\!\!\!\!\mod 2,&1)\\
	&=(&\!\!\! 1\!\! \mod 
	2,&\ 3\!\! \mod 2,&1)\\
	&=(&\!\!\!1,&1,&1)
	\end{array}\right.\right.\]
	
	If $x\in\integers$ and $y\in\integers_{1}$, then, applying the well known identity (see for example \cite{KnPaGr1994} p.82) 
	$$x\!\!\mod y=x-y\lfloor\frac{x}{y} \rfloor, $$ 
	for  $x= \lfloor n:2^{k-i}\rfloor$ ($2\le i\le k$) and $y=2$, to eq. (\ref{def tau}), the latter can be expressed in terms of elementary functions as: \vspace{-0.1in}
	\begin{equation} \label{tau bijection}
	\begin{array} {l} \tau_k(n)\!=\!(\lfloor n: 2^{k-2}\rfloor\!-\! 2\lfloor\frac{\lfloor n: 2^{k-2}\rfloor}{2} \rfloor,\lfloor n: 2^{k-3}\rfloor\!-\! 2\lfloor\frac{\lfloor n: 2^{k-3}\rfloor}{2} \rfloor,\dots ,\lfloor n: 2^{0}\rfloor\!-\!
	2\lfloor\frac{\lfloor n: 2^{0}\rfloor}{2} \rfloor,1). \end{array}
	\end{equation}
\paragraph{Leibnizian Formula}\mbox{}\\
	The main result of this Section is stated and proved by Theorem \ref{Compact form of Hessenbergian}.
	
	Taking into account that $\tau_k(n)$, $n\in \I_{k-1}$ in eq. (\ref{tau bijection}) defines a bijection, we can substitute $\tau_k(n)$ for $\br$ in eq. (\ref{preclosed form 0}) to get:
	\begin{equation} \label{preclosed form 2}
	\det(\Hes_k)=\displaystyle\sum_{n=0}^{2^{k-1}-1} f^{-1}_{k}(\tau_k(n)).
	\end{equation}
	For each $i\in\integers$ such that $1\le i\le k$, we define the function
	\begin{equation}\label{definition of sigma}
	\sigma_{k,i}(n)\stackrel{{\rm def}}{=}\z_{k,i}\circ \tau_k(n)\ \ {\rm for}\ n\in \I_{k-1},
	\end{equation} 
	that is a composite of elementary functions defined over intervals of integers. 
	In view of eq. (\ref{definition of sigma}), we can rewrite eq. (\ref{phi and z}) as:
	\begin{equation}\label{eq. phi and z 2}
	f^{-1}_{k,i}(\tau_k(n))=c_{i,\z_{k,i}(\tau_k(n))}=c_{i,\sigma_{k,i}(n)}\  \ \ {\rm for}\ n\in \I_{k-1}.\vspace{0.1in}
	\end{equation}
	Substituting $\tau_k(n)$ for $\br$ in eq. (\ref{eq. ith factor}), the latter takes the form 
	$$ f^{-1}_{k}(\tau_k(n))=f^{-1}_{k,1}(\tau_k(n))f^{-1}_{k,2}(\tau_k(n))\dots f^{-1}_{k,k}(\tau_k(n))\in\NSEPs_{k}\ \ \text{for}\ \ n\in\I_{k-1},$$ 
	that also defines a bijection, since $f^{-1}_{k}$, $\tau_k$ are bijections. 
	Applying eq. (\ref{eq. phi and z 2}) to the foregoing function formula, the latter  can be equivalently written as 
	\begin{equation}\label{composite of fk with tk_2 }
	(f^{-1}_{k}\circ \tau_k)(n)=c_{1,\sigma_{k,1}(n)}c_{2,\sigma_{k,2}(n)}\dots  c_{k,\sigma_{k,k}(n)}\in\NSEPs_{k}\ \ \text{for}\ \ n\in\I_{k-1},
	\end{equation}
	or in a mapping form as:
	$$f^{-1}_{k}\circ \tau_k:  \I_{k-1}\ni n\mapsto c_{1,\sigma_{k,1}(n)}c_{2,\sigma_{k,2}(n)}\dots  c_{k,\sigma_{k,k}(n)}\in\NSEPs_{k}.$$
	The latter has to be compared with the bijective mapping in eq. (\ref{bijective of SEPs}). 
	\begin{theorem}\label{Compact form of Hessenbergian}
		The Leibnizian representation of the $k$\emph{th} order Hessenbergian $\det(\Hes_k)$ in terms of non-trivial entries of $\Hes_k$, defined in eq. \emph{(\ref{Modified Hessenberg Matrix})}, is\emph{:}
		\begin{equation}\label{compact-form}
		\det(\Hes_k)=\sum_{n=0}^{2^{k\!-\!1}-1} \prod_{i=1}^kc_{i,\sigma_{k,i}(n)}.
		\end{equation}
	\end{theorem}
	\begin{proof}
		Taking into account that the function in  eq. (\ref{composite of fk with tk_2 }) is bijective and starting with eq. (\ref{preclosed form 2}), the result follows from
		\[\begin{array}{lll}
		\det(\Hes_k)&=&\displaystyle\sum_{n=0}^{2^{k-1}-1} f^{-1}_{k}(\tau_k(n))\\\\
		\text{(by eq. (\ref{composite of fk with tk_2 }))} &=&\displaystyle\sum_{n=0}^{2^{k\!-\!1}-1}\prod_{i=1}^kc_{i,\sigma_{k,i}(n)},\vspace{-0.02in}
		\end{array}\vspace{-0.05in}\]
		as claimed.
	\end{proof}
	The compact representation of Hessenbergians in eq. (\ref{compact-form})  must be compared with the corresponding nested sum representation of Hessenbergians  (see \cite{MaTo16} or eq. (\ref{Marrero}) herein) and Mallik's combinatorial formula in \cite{MaEx98}, as adjusted for Hessenbergians in \cite{MaTo16} (see  eq. (9) therein). The formula in eq. (\ref{compact-form}) is also an explicit and compact alternative representation to the recurrence formula (\ref{Hessenbergian recurrence}) for the $k$th order Hessenbergian. An algorithm for its evaluation associated with a computer program are presented in Appendix \ref{Appendix: Algorithms}, Algorithm \ref{algo. Compact}. The program is formulated by the Mathematica symbolic language and verifies the formula (\ref{compact-form}), by yielding an identical result to the one obtained directly by algorithms evaluating determinants, including the recurrence in eq. (\ref{Hessenbergian recurrence}).
	\section{Linear Difference Equations with Variable Coefficients}\label{sec. Linear Difference Equations with Variable Coefficients}
	Nonhomogeneous linear difference equations with variable coefficients of order $p\ge 1$ in normal form (in short VC-LDEs($p$)) are defined by recurrences of the form 
	
\begin{equation} \label{VC-LDE(p)}
	y_t=\sum_{m=1}^{p}\phi_{m}(t)y_{t-m}+v_{t}, \vspace{0.1in}
\end{equation}
where $\phi_m(t)$, $1\le m\le p$ (variable coefficients) and $v_{t}$ (forcing term) are complex valued functions defined for all  $t\in \integers_{s+1}$ for some fixed $s\in\integers$.  
We further assume that $\phi_p(t)\not=0$ for all $t\in\integers_{s+1}$, ensuring that eq. (\ref{VC-LDE(p)}) is of order $p$. 

By virtue of the existence and uniqueness of the solutions for an initial value problem (see \cite{ElInt05}, Theorem 2.7., p. 66), given a sequence of initial values $\{y_{r+1-p},...,y_r\}$ for $r\ge s$ fixed, a solution of eq. (\ref{VC-LDE(p)}) is defined as an explicit representation of $y_t$ for $t\ge r+1$, written in terms of the initial values, the variable coefficients and the forcing term. This is due to the fact that all the parameters in eq. (\ref{VC-LDE(p)}), that is the variable coefficients and the forcing term, are well defined functions for any integer $t\ge r+1$, whenever $r\ge s$. As a consequence, the solution sequence is well defined on the entire set $\integers_{r+1-p}$, whereas   $y_{r+1}=\phi_1(r+1)y_r+\phi_2(r+1)y_{r-1}+...+\phi_p(r+1)y_{r+1-p}$. Hereafter, in absence of ambiguity, we adopt the conventional solution notation $y_t$ in place of the formal notation $y_{t,r}$ of the corresponding initial value problem, since $r$ for $r\ge s$ is assumed to be fixed. 
\subsection{A Unified Construction Process of a Fundamental Solution Set}\label{subs. Infinite Gaussian Elimination}
	In this Subsection we introduce a method for  constructing simultaneously the elements (sequences)  of a fundamental solution set associated with VC-LDEs($p$) (see the first $p$ column sequences in eq. (\ref{QHF}) below). This is based upon the row-finite system representation of VC-LDEs($p$) (see eq. (\ref{infinite system representation} below) and the Gaussian elimination algorithm applied to it, but implemented with a rightmost pivoting. Unlike the recursive construction of the individual solution sequences of eq. (\ref{VC-LDE(p)}), which necessarily takes on a sequence of $p$ initial values, the unified process, presented here, constructs simultaneously all the fundamental solution sequences starting from their $(p+1)$ term,  without assuming any initial values. This is due to the row canonical form of the reduced system coefficient matrix constructed by the infinite Gaussian elimination algorithm, as discussed below.
	
	Row-finite $\omega\times\omega$ (infinite) linear systems, in their general form, were first studied by Toeplitz~\cite{Toeplitz1909} (1909), who extended some fundamental results, established on finite linear systems, to cover row-finite ones. Such systems also represent linear difference equations of irregular order, covering the case when $\phi_p(t)=0$  for some $t\ge s+1$.
	Their solution representation was further developed by Fulkerson~\cite{Fulkerson1951} (1951). He devised and proved the existence of a reduced form, identified here as Fulkerson's row reduced echelon form (FRREF) for any arbitrary row-finite matrix\footnote{A row-finite matrix is an $\mathbb{\omega}\times \mathbb{\omega}$ infinite matrix, each row of which comprises a finite number of non-zero entries.}. An FRREF of a row-finite matrix, say $\A$, satisfies three out of four postulates of finite matrices in row reduced echelon form (RREF). It turns out that $\A$ and an FRREF of $\A$ are left associates. Left association generalizes the notion of row-equivalence of finite matrices (see \cite{Paraskevopoulos2014}). The RREF of a finite matrix is uniquely associated with the matrix, and therefore it is called row canonical form of the matrix. An FRREF of a row-finite matrix, $\A$, is a quasi-canonical form of $\A$, in the sense that two  FRREFs of a row-finite matrix differ only by a permutation of rows. As a consequence, the advantages gained by the row-canonical RREFs for  the solution representation of finite systems, are extended to row-finite ones by their quasi-canonical FRREF. 
	The arguments in \cite{Fulkerson1951} establishing the existence of an FRREF for a row-finite matrix, invoked the countable axiom of choice. 
	In contrast to the non-constructive nature of this axiom, a modified version of the Gauss-Jordan elimination algorithm has been recently introduced by Paraskevopoulos in~\cite{Paraskevopoulos2012}, which constructs the FRREF of an arbitrary row-finite matrix and called infinite Gauss-Jordan elimination (IGJE) algorithm. In a companion paper (see \cite{Paraskevopoulos2014}), he further developed the IGJE algorithm  capitalizing on the type and the form of the general solution of row-finite linear systems. If the dimension of the column-null space of the coefficient matrix is infinite, then 
	the IGJE algorithm yields a Schauder basis of the column-null space, relative to the Fr\'{e}chet metric, otherwise it yields a finite basis of vector spaces. The latter type of basis coincides with a fundamental solution set associated with the row-finite system representation of VC-LDEs($p$).
	
	The solution sequence constructed by the IGJE algorithm is solely the result of a rightmost pivot elimination strategy. As a counter example, employing a VC-LDE($1$), it is shown in the above cited reference that the conventional Gauss-Jordan elimination algorithm, implemented by a leftmost pivoting, fails to construct both a row-equivalent reduced matrix and the solution of the original VC-LDE($1$). This was the main barrier preventing researchers from choosing the IGJE algorithm for solving VC-LDEs and more generally row-finite systems.
	
	Eq. (\ref{VC-LDE(p)}) can be equivalenly represented by a row-finite linear system of the form: %\vspace{-0.1in}
	{\small
		\begin{equation}\label{infinite system representation}
		\left[ 
		\begin{array}{ccccccccc}\
		\hspace{-0.05in}\phi _{p}(r+1) & \phi _{p-1}(r+1) & \hspace{-0.05in}\phi
		_{p-2}(r+1)\ ... & \hspace{-0.05in}\phi _{1}(r+1) & \hspace{-0.05in}-1 & 
		\hspace{-0.05in}0 & 0 & ... &  \\ 
		0 & \phi _{p}(r+2) & \hspace{-0.05in}\phi _{p-1}(r+2)\ ... & \hspace{-0.05in}%
		\phi _{2}(r+2) & \hspace{-0.07in}\phi _{1}(r+2) & \hspace{-0.05in}-1 & 0 & 
		... &  \\ 
		0 & 0 & \hspace{-0.05in}\phi _{p}(r+3)... & \hspace{-0.05in}\phi _{3}(r+3) & 
		\hspace{-0.05in}\phi _{2}(r+3) & \hspace{-0.05in}\phi _{1}(r+3) & -1 & ...	 &  \\ 
		\vdots & \vdots & \vdots & \vdots & \vdots & \vdots & \vdots &  & 
		\end{array}%
		\hspace{-0.05in}\right] \!\!\left[ \!\!%
		\begin{array}{c}
		y_{r+1-p} \\ 
		y_{r+2-p} \\ 
		y_{r+3-p} \\ 
		\vdots \\ 
		y_{r} \\ 
		y_{r+1} \\ 
		y_{r+2}\\ 
		\end{array}
		\!\!\right] \hspace{-0.05in}=-\left[ \!\!
		\begin{array}{c}
		v_{r+1} \\ 
		v_{r+2} \\ 
		\vdots
		\end{array}
		\!\!\right].  
		\end{equation}} 
	The coefficient matrix of eq.  (\ref{infinite system representation}), say $\A$, is row-finite with elements the variable coefficients of eq. (\ref{VC-LDE(p)}) evaluated at corresponding instances, starting at fixed  instance $r+1$. Since $\A$ is in lower echelon form,  the infinite Gaussian elimination part (IGE)~\footnote{A recursive alternative to the infinite Gaussian elimination algorithm is presented in \cite{PaRA04}.} of the IGJE algorithm is only needed for reducing $\A$ to its FRREF and the reduced matrix is unique,  denoted by $\FRREF(\A)$. Implementing the IGE with rightmost pivoting, the pivot elements are the $(-1)$s occupying the $p$th superdiagonal of $\A$. A step-by step construction of $\FRREF(\A)$ is presented in Appendix \ref{Appendix: IGE}. As shown there, the elimination process leads to a recurrence analogous to that  in eq. (\ref{VC-LDE(p)}). 
	
	The IGE algorithm  constructs the $\FRREF(\A)$, which is given by
	{\normalsize\begin{equation}\label{QHF}
		\FRREF(\A)=\left[\begin{array}{cccccccc}
		-\xi^{(p)}_{r+1,r} & -\xi^{(p-1)}_{r+1,r}&...& -\xi^{(1)}_{r+1,r}&1&0&0&...\\
		-\xi^{(p)}_{r+2,r}& -\xi^{(p-1)}_{r+2,r}&...& -\xi^{(1)}_{r+2,r}&0&1&0&... \\
		-\xi^{(p)}_{r+3,r} & -\xi^{(p-1)}_{r+3,r} &...& -\xi^{(1)}_{r+3,r}&0&0&1&... \\
		\vdots & \vdots & \vdots\vdots\vdots & \vdots&\vdots & \vdots & \vdots
		\end{array}\right].
		\end{equation}}
An explicit form of each sequence   $\xi^{(m)}_{t,r}$ for $1\le m\le p$ and $t\ge r+1$ in  eq. (\ref{QHF}) is established in Subsection \ref{Fundamental and General Homogeneous Solutions} (see eqs. (\ref{Exdef:xi^m}) and (\ref{Def1: Phi^m_k})) and conveniently visualized by VC-LDEs of order $p=2$ in Example \ref{ex.1} below.

For each $m\in\lbracket 1,p \rbracket$, we define the $m$th unit vector by:  $\Unit_{p+1-m}=[\delta_{p+1-m,j}]_{1\le j\le p}$, where $\delta_{i,j}$ is the Kronecker delta. We further define the augmented sequence $\xi^{(m)}_{.,r}\stackrel{\rm def}{=}[\Unit_{p+1-m}\concat[\xi^{(m)}_{r+1,r},\xi^{(m)}_{r+2,r},...]]'$, where $[... ]'$ stands  for transposition and ``$\nconcat$'' for concatenation of vectors.  As $\FRREF(\A)$ is a row-canonical form of $\A$ the results in \cite{Paraskevopoulos2014} entail that: First, for any $m\in \lbracket1,p\rbracket$ the  sequence $\xi^{(m)}_{.,r}$, is a solution of the homogeneous system $\A\ \y= \0$ associated with eq. (\ref{infinite system representation}) (see Example \ref{ex.1} below). Equivalently, the terms $\xi^{(m)}_{r+i,r}$, $i\ge 1$, of each individual $\xi^{(m)}_{.,r}$ form a homogeneous solution sequence of  eq. (\ref{VC-LDE(p)}) (when $v_{t}=0$ for all $t\ge s+1$), taking on the components of $\Unit_{p+1-m}$ as initial values.
%The latter result is independently proved in  Proposition \ref{KSIs solutions} of Subsection \ref{Fundamental  and General Homogeneous Solutions}. 
Second, the set $\varXi_{r}=\{\xi^{(m)}_{.,r}\}_{1\le m\le p}$ is  a fundamental solution set associated with eq. (\ref{infinite system representation})) (or eq. (\ref{VC-LDE(p)}). %This result is independently established by Theorem \ref{Theo: fundamental solution set}) in Subsection \ref{Fundamental  and General Homogeneous Solutions}. 
Both aforementioned results are independently established in the next Subsection.
We remark that the fundamental solution sequence $\xi^{(1)}_{.,r}$ is the first element of $\varXi_{r}$, but its terms $\xi^{(1)}_{t,r}$ for $t\ge r+1$ occupy the $p$th opposite signed column of $\FRREF(\A)$, namely: $\xi^{(1)}_{.,r}=[\Unit_{p}\concat[\xi^{(m)}_{r+1,r},\xi^{(1)}_{r+2,r},...]]'=[0,0,...,1,\xi^{(1)}_{r+1,r}, \xi^{(1)}_{r+2,r},...]'$.  
	\begin{example}\label{ex.1}
	{\rm In this Example we apply the IGE algorithm to a row-finite system representation of VC-LDE($2$). The IGE algorithm applies to the coefficient matrix, $\A$, of eq. (\ref{infinite system representation}) for $p=2$ and simultaneously constructs the terms $\{\xi^{(1)} (t,r)\}_{t\geq r+1}$ and $\{\xi^{(2)} (t,r)\}_{t\geq r+1}$ of the two fundamental solutions, whereas their opposite signed terms occupy the second and the first columns of $\FRREF(\A)$, respectively. The first three terms of $\{\xi^{(1)} (t,r)\}_{t\geq r+1}$ are:\vspace{-0.25in}}
	\end{example}
	\begin{align*}
	\xi^{(1)}_{r+1,r}&=\phi_{1}(r+1),\ \ \ 
	\xi^{(1)}_{r+2,r}=\phi _{1}(r+1)\phi _{1}(r+2)+\phi _{2}(r+2),\\ \xi^{(1)}_{r+3,r}&=\phi_1(r+1)[\phi_1(r+2)\phi_1(r+3)+\phi_2(r+3)]+\phi_2(r+2)\phi_1(r+3),...\vspace{-0.06in}
	\end{align*}
	(see the Appendix \ref{Appendix: IGE} for a verification of  $\A\ [0,1,\xi^{(1)}_{r+1,r},\xi^{(1)}_{r+2,r},...]'=\0$). According to \cite{Paraskevopoulos2014} the above sequence augmented on its left by the unit vector $\Unit_2=[0,1]$ yields the first fundamental solution.	
	The algorithmic outcomes are expansions of the following Hessenbergians:\vspace{-0.12in}
	\begin{equation*}
	\begin{array}{l}
	\xi^{(1)}_{r+1,r}=\phi
	_{1}(r+1),\ \xi^{(1)}_{r+2,r}=\left \vert 
	\begin{array}{lc}
	\phi _{1}(r+1) & -1   \\ 
	\phi _{2}(r+2) & \phi _{1}(r+2)  
	\end{array}\right \vert,\  
	\xi^{(1)}_{r+3,r}=\left \vert \begin{array}{ccc}
	\phi_{1}(r+1) & -1 & 0 \\ 
	\phi_{2}(r+2) & \phi_{1}(r+2) & -1  \\
     0	& \phi_{2}(r+3) & \phi_{1}(r+3)
	\end{array}\right \vert,...\vspace{-0.1in}
	\end{array}
	\end{equation*}
	The corresponding terms of $\{\xi^{(2)} (t,r)\}_{t\geq r+1}$ are:\vspace{-0.03in}
	\begin{equation*}
	\xi^{(2)}_{r+1,r}=\phi_{2}(r+1),\ \ \
	\xi^{(2)}_{r+2,r}=\phi _{2}(r+1)\phi _{1}(r+2),\ \ \ \xi^{(1)}_{r+3,r}=\phi_2(r+1)[\phi_1(r+2)\phi_1(r+3)+\phi_2(r+3)],...\vspace{-0.02in}
	\end{equation*}
	This sequence augmented on its left by the unit vector $\Unit_1=[1,0]$ yields the second fundamental solution. In this case, the algorithmic outcomes are also expansions of  Hessenbergians of the form:\vspace{-0.02in}
	\begin{equation*}
	\begin{array}{l}
	\xi^{(2)}_{r+1,r}=\phi
	_{2}(r+1),\ \xi^{(2)}_{r+2,r}=\left \vert 
	\begin{array}{cc}
	\phi _{2}(r+1) & -1   \\ 
	0 & \phi _{1}(r+2)  
	\end{array}\right \vert, \ 
	\xi^{(2)}_{r+3,r}=\left \vert \begin{array}{ccc}
	\phi_{2}(r+1) & -1 & 0 \\ 
	0	& \phi_{1}(r+2) & -1  \\
	 0   & \phi_{2}(r+3) & \phi_{1}(r+3)
	\end{array}\right \vert,...\vspace{-0.05in}
	\end{array}
	\end{equation*}
The terms $\xi^{(m)}_{r+i,r}$ for $m=1,2$ and $i\ge 1$, of each individual $\xi^{(m)}_{.,r}$ can also be constructed recursively via eq. (\ref{VC-LDE(p)}) for $p=2$, when $v_{r+i}=0$ for all $i\ge 1$, taking on the components of $\Unit_{2}, \Unit_{1}$ as  initial values, respectively. 

\noindent These results led us to propose a generalized Definition of $\xi^{(m)}_{t,r}$ in eqs. (\ref{Exdef:xi^m}) and (\ref{Def1: Phi^m_k}) below.
	
Applying the same sequence of row elementary operations, used by the IGE for the row reduction of $\A$ to $\FRREF(\A)$, but now to the sequence  $\{-v_{r+i}\}_{i\ge 1}$, a particular solution sequence is constructed (see Appendix \ref{Appendix: IGE}). The process leads to a recurrence, which equivalently constructs the particular solution. This solution sequence is also explicitly represented by a Hessenbergian, but not a banded one, as shown in Proposition \ref{particular solution determinant representation}. The general solution is a linear combination of the fundamental solutions with coefficients arbitrary initial condition values $y_{r+1-m}=a_m$ for $1\le m\le p$ (that is the general homogeneous solution, see  Proposition \ref{Homogeneous solution}) plus the aforementioned particular solution (see eq. (\ref{nonhomogeneous solution2})). 
	
\subsection{Fundamental and General Homogeneous Solutions}\label{Fundamental and General Homogeneous Solutions}
	A fundamental set of solutions associated with VC-LDEs($p$) plays a significant role in the explicit representation of the Green's function, the companion matrix product (or the Casorati matrix) as well as the general solution of VC-LDEs($p$). The existence of such solution sets is theoretically guaranteed by the fundamental Theorem of VC-LDEs($p$) (see \cite{ElInt05} p. 74).
	As a consequence of the superposition  principle (see the previously cited reference) the homogeneous solution of VC-LDEs($p$) can be expressed as a linear combination of fundamental solutions whose coefficients are expressions involving the initial condition values.
	
	Given some $r\ge s$, the homogeneous linear difference equation associated with eq. (\ref{VC-LDE(p)}) is of the form\vspace{-0.06in}
	\begin{equation} \label{HLDE001}
	\displaystyle y_{t}=\sum_{m=1}^{p}\phi_m(t)y_{t-m},\ \   t\ge r+1,\vspace{-0.06in}
	\end{equation}
	that is eq. (\ref{VC-LDE(p)}) applied with forcing terms $\upsilon_t=0$ for all $t\ge s+1$. The linear difference operator associated with eq. (\ref{HLDE001}) is\vspace{-0.06in}
	\begin{equation}\label{eq. LDO}
	\Phi_{t}(B)=1-\sum \limits_{m=1}^{p}\phi_{m}(t)B^{m}, \ \ t\ge r+1,
	\end{equation}
	where $B$ is the backshift (or lag) operator. 
	Eq. (\ref{HLDE001}) can be equivalently rephrased as: $\Phi_{t}(B)y_t=\0$.
	
	One of the objectives of this Subsection is to provide an explicit solution function (or sequence) $y_t$ of eq. (\ref{HLDE001}) on $\integers_{r+1}$ for a fixed $r\ge s$, solely expressed in terms of the initial values $\{y_{r+1-m}\}_{1\le m\le p}$ and the variable coefficients $\phi_m(t)$.
	
	In the previous Subsection, the IGE algorithm was employed to construct simultaneously the fundamental solution sequences $\xi^{(m)}_{.,r}$ for $1\le m\le p$ associated with eq. (\ref{HLDE001}). However, the banded Hessenbergian representation of the fundamental solutions constructed by the IGE must be formally established and we are doing so here. 
	Thanks to the uniqueness of an initial value problem, the aforementioned result can be established by showing independently that each sequence $\xi^{(m)}_{.,r}$ for $1\le m\le p$, defined in eq. (\ref{Exdef:xi^m}) below, also solves eq. (\ref{HLDE001}), assuming $\Unit_{p+1-m}$ as initial condition vectors (see Proposition \ref{KSIs solutions}). Additionally, Theorem \ref{Theo: fundamental solution set})  establishes independently that these solutions form a fundamental solution set associated with eq. (\ref{HLDE001}).
	
	For each $m\in \lbracket 1, p\rbracket$, we define the two variable function $\xi^{(m)}_{t,r}$ for $(t,r)\in \integers_{s+1-p}\times\integers_{s}$  associated with eq. (\ref{HLDE001}) (or (\ref{eq. LDO})) as follows\vspace{-0.15in}
	\begin{equation}\label{Exdef:xi^m}
	\xi^{(m)}_{t,r}=\left\{
	\begin{array}{clr} 
	\det(\BGF^{(m)}_{t,r}) & {\rm if }\  r+1 \le t,   \vspace{-0.1in}\\
	&& ( r\ge s\  \text{and}\  t\ge s+1-p), \\
	1 & {\rm if \ }  t=r-m+1,  \\
	0 & {\rm elsewhere \ } \end{array}\right.
	\end{equation}
	where $\BGF^{(m)}_{r+1,r}\stackrel{\rm def}{=}[\phi_{m}(r+1)]$ (i.e., $\BGF^{(m)}_{r+1,r}$ is assigned to a $1\times 1$ matrix) and
	\begin{equation}\label{Def1: Phi^m_k}
	\BGF^{(m)}_{t,r}\!\! =\!\! 
	\left[\!\! 
	\begin{array}{ccccccc}
	\phi_{m}(r+1) & -1 &  &  &  &  &    \\ 
	\phi_{m+1}(r+2) & \phi _{1}(r+2) & \ddots
	&  &  &  &   \vspace{-0.01in} \\ 
	\vdots & \vdots & \ddots &\ \  \ddots &  &  &   \vspace{-0.06in}\\ 
	\phi_{p}(r+p+1-m) & \phi _{p-m}(r +p+1-m) & \ddots
	& \ddots &  &  &    \vspace{-0.01in}\\ 	
	& \vdots & \ddots & \ddots & \ \ \ \ \ddots  &  &      \vspace{-0.06in}\\
	& \phi _{p}(r +1+p) & \ddots
	& \ddots & \ddots &\ddots &    \\ 
	&  & \ddots & \ddots & \ddots &  \ddots &\ddots  \\ 
	&  &  & \phi _{p}(t-1) & \phi _{p-1}(t-1)& \cdots  &  -1 \\ 
	&  &  &  & \phi _{p}(t) & \cdots
	& \phi _{1}(t)\end{array}\!\! \right].
	\end{equation} 
	Here and in what follows empty spaces in a matrix have to be replaced by zeros. 
	
%	$\BGF^{(m)}_{t,r}$ is given by  eq. (\ref{Def1: Phi^m_k}).
	The matrices $\BGF^{(m)}_{t,r}$ for $t\ge r+1$ are banded lower Hessenberg matrices of order $k=t-r$. After a large enough $t$ ($t\ge r+p+1-m$), $\BGF^{(m)}_{t,r}$ admits a fixed total bandwidth $(p+1)$, noticing that the matrices $\BGF^{(m_1)}_{t,r}$ and $\BGF^{(m_2)}_{t,r}$ for $m_1\not= m_2$ differ only in their first column (see eq. (\ref{Def1: Phi^m_k})). For a fixed $r$ with $r\ge s$, $\xi^{(m)}_{t,r}$ is considered as an one variable function (or sequence) in $t$ with $t\ge r+1-p$, whose first $p$ values are:\vspace{-0.05in}
	\begin{equation}\label{xi^m initial values}
	\xi^{(m)}_{r+1-m,r}=1\  {\rm and}\ \xi^{(m)}_{r+1-i,r}=0,\ {\rm whenever}\  i\not= m.
	\end{equation}
We shall also use the sequence notation $\xi^{(m)}_{.,r}=\{\xi^{(m)}_{t,r}\}_{t\in \integers_{r+1-p}}$ for a fixed $r\ge s$.	Eqs. (\ref{xi^m initial values}) describe the initial condition unit vector $\Unit_{p+1-m}=[\xi^{(m)}_{r+1-p,r}, \xi^{(m)}_{r+2-p,r},...,\xi^{(m)}_{r,r}]$. If $m=1$, then $[\xi^{(1)}_{r+1-p,r}, \xi^{(1)}_{r-1,r},...,\xi^{(1)}_{r,r}]=[0,0,...,1]=\Unit_p$. 
Some useful values $\xi^{(m)}_{t,r}$ for $1\le m\le p$ and any $r\ge s$ are:\\ 
$\xi^{(1)}_{r,r}=\xi^{(m)}_{r-m+1,r}=1,\xi^{(1)}_{r,r+i}=0\ (i>0), \xi^{(1)}_{r+1,r}=\phi_1(r+1),  \xi^{(m)}_{r+1,r}=\phi_m(r+1),  \xi^{(m)}_{r+2,r}=\left|\begin{array}{ll} \phi_m(r+1) & -1\\
\phi_{m+1}(r+2) & \phi_1(r+2)\end{array}\right|$. 
 By an abuse of terminology, the two variable functions $\xi^{(m)}_{t,r}$ defined on $\integers_{s+1-p}\times\integers_{s}$ in eq. (\ref{Exdef:xi^m}) will be also referred to  as banded Hessenbergians.
Taking into account that $\BGF^{(m)}_{t,r}$ in eq. (\ref{Def1: Phi^m_k}) is a banded Hessenberg matrix, the matrix ${\Hes}_{t-r}$ in eq. (\ref{Modified Hessenberg Matrix}) can be identified with $\BGF^{(m)}_{t,r}$ via the assignment
	\begin{equation}\label{assignment}
	c_{i,j}=\left\{\begin{array}{cl}
	\phi_{i-1+m}(r+i) & {\rm if}\  j=1 \ {\rm and}\  1\le i\le p+1-m\\
	\phi_{i+1-j}(r+i) & {\rm if}\  2\le j\le i \le t-r \ {\rm and}\ 1\le i-j+1\le p  \\
	1 & {\rm if} \ j=i+1        \\
	0 & {\rm elsewhere},
	\end{array}\right. 
	\end{equation}
	provided that $m\in\lbracket 1,p\rbracket$   is  fixed, each time we referred to eq. (\ref{assignment}). 
	\begin{proposition}\label{KSIs solutions}
		Let $m\in\lbracket 1,p\rbracket$. The sequence $\{\xi^{(m)}_{t,r}\}_{t\in \integers_{r+1}}$ for any arbitrary but fixed $r\ge s$, defined in eq. {\rm (\ref{Exdef:xi^m})}, solves  eq. {\rm (\ref{HLDE001})}, assuming the initial condition unit vector $[\xi^{(m)}_{r+1-p,r}, \xi^{(m)}_{r+2-p,r},...,\xi^{(m)}_{r,r}]=\Unit_{p+1-m}$.
	\end{proposition}
	\begin{proof}
	Taking into account that $c_{i,i+1}=1$, the recurrence in eq. (\ref{modified recurrence2}) applied for $i=k=t-r$ (the order of the matrix) takes the form:\vspace{-0.1in} 
	\begin{equation}\label{eq. recurrence}
		|\Hes_{i}|=c_{i,1}|\Hes_0|+c_{i,2}|\Hes_1|+...+c_{i,i-1}|\Hes_{i-2}|+c_{i,i}|\Hes_{i-1}|.\vspace{-0.05in}
	\end{equation}	
	We examine the following cases:\\
	i) Let $1\le i\le p+1-m$. This inequality can be equivalently written as  $r+1\le t\le r+p+1-m$, which means that $\BGF^{(m)}_{t,r}$ in eq. (\ref{Def1: Phi^m_k}) is a full lower Hessenberg matrix. We can equivalently write the above inequality as $t=r+p+1-m-l$, whenever $0\le l\le p-m$. 
	As $i=t-r=p+1-m-l$, it follows from eq. (\ref{assignment}) that: $c_{i,1}=c_{t-r,1}=\phi_{p+1-m-l-1+m}(r+p+1-m-l)=\phi_{p-l}(t)$ and $c_{i,2}=c_{t-r,2}=\phi_{p+1-m-l-2+1}(r+p+1-m-l)=\phi_{p-m-l}(t)$. Proceeding in this way, the remaining values of $c_{i,j}$ for $3\le j\le p$ are:
	$c_{t-r,3}=\phi_{t-r-2}(t)=\phi_{p-m-l}(t),..., c_{t-r,t-r-1}=\phi_{2}(t), c_{t-r,t-r}=\phi_{1}(t)$. We then replace the above results to the right-hand side of eq. (\ref{eq. recurrence}) coupled with: $|\Hes_{0}|=1=\xi^{(m)}_{r+1-m,r}$, $|\Hes_{1}|=\phi_m(r+1)=\xi^{(m)}_{r+1,r}$,..., $|\Hes_{t-2-r}|=\xi^{(m)}_{t-2,r}$ and $|\Hes_{t-1-r}|=\xi^{(m)}_{t-1,r}$. As the left-hand side of eq. (\ref{eq. recurrence}) can be replaced  with $|\Hes_{i}|=|\Hes_{t-r}|=\xi^{(m)}_{t,r}$, it  takes the form \vspace{-0.02in}
	\[\begin{array}{lcll}
		\xi^{(m)}_{t,r}
		\!\!\!\!\!\!&=\!\underbrace{\phi_{p-l}(t)\xi^{(m)}_{r+1-\!m,r}\!\!+\!\phi_{p-l-\!1}(t)\xi^{(m)}_{r+2\!-m,r}\!\!+\!...\!+\!\phi_{p-l+1\!-m}(t)\xi^{(m)}_{r,r}}\!\!&\!\!\!\!+\phi_{p-l-m}(t)\xi^{(m)}_{r+1,r}\!\!+\!...\!+\!\phi_2(t)\xi^{(m)}_{t-2,r}\!+\!\phi_1(t)\xi^{(m)}_{t-\!1,r},\\
		& \text{initial values}& &\vspace{-0.06in} 
		\end{array}\]
	where the values of $\xi^{(m)}_{r+1-m,r}$ up to and including $\xi^{(m)}_{r,r}$ are initial values defined in eq. (\ref{xi^m initial values}), that is $\xi^{(m)}_{r+1-m,r}=1$ and the remaining initial values are zero, whenever $m\not=1$, while if $m=1$, then  $\xi^{(1)}_{r,r}=\xi^{(m)}_{r+1-m,r}=1$.
	
	\noindent ii) Let $ i > p+1-m$. As $t-r-1+m>p$, in view of eq. (\ref{Def1: Phi^m_k}), the matrix  $\BGF^{(m)}_{t,r}$ is a lower banded Hessenberg matrix. 
	This means that the first values of  $c_{i,j}$ in eq. (\ref{eq. recurrence})  starting from $c_{i, 1}$ up to and including $c_{i, i-p}$ are zero. Applying eq. (\ref{assignment}),  we have: $c_{i,i}=\phi_{1}(t), \ c_{i,i-1}=\phi_{2}(t),...,c_{i,i-p+1}=\phi_{p}(t)$, which is in accord with eq.  (\ref{assignment}), since:
	$c_{t-r,1}=...=c_{t-r,t-r-p}=0$.
	Substituting the above values of $c_{i,j}$ and replacing $\det(\Hes_{t-r-j})$ with $\xi^{(m)}_{t-j,r}$ for $0\le j \le p$ in the recurrence (\ref{modified recurrence2}), the latter takes the form:\vspace{-0.02in} 
	\[	\xi^{(m)}_{t,r}
		=\phi_p(t)\xi^{(m)}_{t-p,r}+\phi_{p-1}(t)\xi^{(m)}_{t-p+1,r}+...+\phi_2(t)\xi^{(m)}_{t-2,r}+\phi_1(t)\xi^{(m)}_{t-1,r}.\vspace{-0.01in} 
		\]
	In both cases $\xi^{(m)}_{.,r}$ satisfies eq. (\ref{HLDE001}), as required.
	\end{proof}
We conclude from the uniqueness of the initial value problem that the fundamental solutions constructed by the IGE algorithm or by recursion must all coincide with the banded Hessenbergian ones.
%	An alternative to the second part in the proof of Proposition \ref{KSIs solutions} can be deduced from Lemma \ref{Lemma cofactor}(ii) below.
	\begin{theorem}\label{Theo: fundamental solution set}
	For each fixed $r\ge s$, the set   $\varXi_{r}=\{\xi^{(1)}_{.,r},\xi^{(2)}_{.,r},...,\xi^{(p)}_{.,r}\}$, consisting of $p$  sequences defined over the same domain  $\integers_{r+1-p}$, is a fundamental set of solutions associated with  eq. \emph{(\ref{HLDE001})}.  
	\end{theorem}
	\begin{proof}
	Notice first that  each sequence  $\xi^{(m)}_{.,r}$ (the $m$th element of  $\varXi_r$) starts with the value $\xi^{(m)}_{r+1-p,r}$, whence the domain of the function $\xi^{(m)}_{.,r}$ is  $\integers_{r+1-p}$. Taking into account that the elements of  $\varXi_{r}$ are solutions of eq. {\rm (\ref{HLDE001})} (see Proposition \ref{KSIs solutions}), it suffices to verify that the set $\varXi_{r}$ is linearly independent. Equivalently, it must be shown that the Casoratian of the matrix\vspace{-0.08in} 
		\begin{equation}\label{Casorati matrices}
	\BXI_{t,r}=\left[\begin{array}{cccc}
	\xi^{(1)}_{t,r} & \xi^{(2)}_{t,r}&...& \xi^{(p)}_{t,r}\vspace{0.05in}\\
	\xi^{(1)}_{t-1,r}& \xi^{(2)}_{t-1,r}&...& \xi^{(p)}_{t-1,r}\vspace{0.05in}\\
	\vdots & \vdots & \vdots\vdots\vdots & \vdots\\
	\xi^{(1)}_{t-p+1,r} & \xi^{(2)}_{t-p+1,r} &...& \xi^{(p)}_{t-p+1,r} \end{array}\right]
		\end{equation}
	associated with the solution set $\varXi_{r}$ is nonzero for all $t\ge r$. Definition (\ref{Exdef:xi^m}) entails that the matrix $\BXI_{r,r}$ is the identity matrix of order $p$, that is $\BXI_{r,r}=\BI_p$. Therefore the first Casoratian $|\BXI_{r,r}|$ of the set $\varXi_{r}$ is $|\BXI_{r,r}|=1\not=0$. It follows that $|\BXI_{t,r}|\not=0$ for all $t\ge r$ (see Lemma 1.3 in \cite{MiLD68}, applied for $a=r-p+1$) and the set $\varXi_{r}$ is linearly independent. That is $\varXi_{r}$ is a fundamental set of solutions of eq. (\ref{HLDE001}).
	\end{proof}
	For any $r\ge s$, the dimension of the homogeneous solution space of $\Phi_{t}(B)$ for $t\ge r+1$ in eq. (\ref{eq. LDO}) is $p$. As $\varXi_{r}$ spans this space and  $\card(\varXi_{r})=p$, it follows that the set $\varXi_{r}$ is a basis of the column-null space of the coefficient matrix in eq. (\ref{infinite system representation}).
	Besides, as $|\BXI_{t,r}|\not=0$ for all $t\ge r$, Corollary \ref{cor: invertible} below, follows immediately. 
	\begin{corollary}\label{cor: invertible}
	The Casorati matrix $\BXI_{t,r}$ in eq. {\rm (\ref{Casorati matrices})} is invertible \emph{(}or non-singular\emph{)} for all $t\ge r$ and any $r\ge s$.
	\end{corollary}
	Corollary (\ref{cor: invertible}) is also established independently, as a consequence of Theorem \ref{theo: explicit PCM} in Subsection \ref{subs. Companion Matrix Product}. 		
	\begin{proposition} \label{Homogeneous solution}
	Let $r\ge s$. A solution sequence $\{y_t\}_{t\in \integers_{r+1}}$ of eq. {\rm (\ref{HLDE001})},  can be explicitly and uniquely expressed in terms of any sequence of  prescribed values, say $\{y_{r+1-m}\}_{1\le m\le p}$,  as\vspace{-0.05in}
		\begin{equation} \label{homogeneous solution0} y_t=\displaystyle\sum_{m=1}^{p}\xi^{(m)}_{t,r}y_{r+1-m}.\vspace{-0.02in}
		\end{equation}
	\end{proposition}
	\begin{proof}
	As $\varXi_{r}$ (defined in Theorem \ref{Theo: fundamental solution set}  is a fundamental set of solutions whose elements are defined over $\integers_{r+1-p}$, that is  $\xi^{(m)}_{.,r}=\{\xi^{(m)}_{t,r}\}_{t\in \integers_{r+1-p}}$, there exist unique scalars $a_m$ for $1\le m\le p$ such that: \vspace{-0.05in}
	\begin{equation} \label{Homogeneous solution2}
		y_t=\displaystyle\sum_{m=1}^{p}a_m\xi^{(m)}_{t,r},\  {\rm for\ all }\ \ t\ge r+1-p.\vspace{-0.02in}
	\end{equation}
	Taking into account that  $[\xi^{(m)}_{r+1-p,r},\xi^{(m)}_{r+2-p,r},...,\xi^{(m)}_{r,r}]=\Unit_{p+1-m}$ for $1\le m\le p$ (see eqs. (\ref{xi^m initial values})), applying eq. (\ref{Homogeneous solution2}) for $t=r+1-j$ with $1\le j\le p$, we have: 
		$y_{r+1-j}=\sum_{m=1}^{p}a_{m}\xi^{(m)}_{r+1-j,r}=a_j,\ \ \text{for all}\ \ j=1,...,p$
	as claimed.
	\end{proof}
	\subsection{Principal Determinant}
	\label{sub:Principal Determinant} 
The building element for the remaining results of this paper is the first banded Hessenbergian function $\xi^{(1)}_{t,r}$. A generic feature of  $\xi^{(1)}_{t,r}$ is shown in Proposition \ref{prop. global feature}. It states that the terms $\{\xi^{(m)}_{t,r}\}_{t\ge r+1}$ of the  fundamental solution $\xi^{(m)}_{.,r}$ for any  $2\le m\le p$ can be expressed in terms of $\xi^{(1)}_{t,r}$ and the variable coefficients. This result yields an explicit representation of the general homogeneous solution of eq. (\ref{HLDE001}) exclusively in terms of $\xi^{(1)}_{t,r}$, the variable coefficients and the initial conditions (see eq. (\ref{eq. general homogeneous solution})).
	\begin{definition}\label{Principal determinant}
	The principal matrix associated with the difference operator in eq. {\rm (\ref{eq. LDO})} is denoted by $\BGF_{t,r}$ and is defined by setting $m=1$ in the first branch of eq. \emph{(\ref{Exdef:xi^m})}, that is
	\begin{equation}\label{eq. Principal Matrix}
		\BGF_{t,r}=
		\left[\!\! 
		\begin{array}{ccccccc}
		\phi_{1}(r+1) & -1 &  &  &  &  &    \\ 
		\phi_{2}(r+2) & \phi _{1}(r+2) & \ddots
		&  &  &  &    \vspace{-0.03in}\\ 
		\vdots & \vdots & \ddots &\ \  \ddots &  &  &   \vspace{-0.05in}\\ 
		\phi_{p}(r+p) & \phi _{p-1}(r +p) & \ddots
		& \ddots &  &  &    \vspace{-0.03in}\\ 	
		& \vdots & \ddots & \ddots & \ \ \ \ \ddots  &  &      \vspace{-0.05in}\\
		& \phi_{p}(r+p+1) & \ddots
		& \ddots & \ddots &\ddots &    \\ 
		&  & \ddots & \ddots & \ddots &  \ddots &\ddots  \\ 
		&  &  & \phi_{p}(t-1) & \phi_{p-1}(t-1)& \cdots  &  -1 \\
		&  &  &  & \phi _{p}(t) & \cdots
		& \phi_{1}(t)\end{array}\!\! \right] 
		\end{equation} 
		The determinant of $\BGF_{t,r}$ is called principal determinant and denoted by\emph{:}  $\xi_{t,r}\stackrel{\rm def}{=}\xi^{(1)}_{t,r}$. The two variable function $\xi_{t,r}$ on $\integers_{s+1-p}\times\integers_{s}$, defined in eq. \emph{(\ref{Exdef:xi^m})} for $m=1$, will be referred to as principal determinant function.
	\end{definition}
	In order to simplify proofs, in place of banded Hessenbergians, we shall use full Hessenbergians of order $(t-r)$, as follows: $\xi^{(m)}_{t,r}=\det(\BGF^{(m)}_{t,r})$, where
	\begin{equation}\label{complete Hessenbergian}
	\BGF_{t,r}^{(m)}=\left[
	\begin{array}[c]{cccccc}
	{\small \phi}_{m}{\small (r+1)} & {\small -1} &  &  &  & \\
	{\small \phi}_{m+1}{\small (r+2)} & {\small \phi}_{1}{\small (r+2)} & \ddots &
	&  & \\
	{\small \phi}_{m+2}{\small (r+3)} & {\small \phi}_{2}{\small (r+3)} & \ddots &
	\ddots &  & \vspace{-0.04in}\\
	\vdots & \vdots & \ddots & \ddots & \ddots & \vspace{-0.0in}\\
	{\small \phi}_{m+t-r-2}{\small (t-1)} & {\small \phi}_{t-r-2}{\small (t-1)} &
	{\small \cdots} & {\small \phi}_{2}{\small (t-1)} & {\small \phi}_{1}{\small (t-1)} & -1\\
	{\small \phi}_{m+t-r-1}{\small (t)} & {\small \phi}_{t-r-1}{\small (t)} &
	{\small \cdots} & {\small \phi}_{3}{\small (t)} & {\small \phi}_{2}
	{\small (t)} & {\small \phi}_{1}{\small (t)}
	\end{array}
	\right],
	\end{equation}
	coupled with the convention: 
	$\phi_{i}(t)=0, {\rm \ whenever\ } i>p\ {\rm and\ for\ all\ } t\in\integers_{r+1}.$
	In accordance this convention full Hessenbergians turn into banded ones and in this case the matrices in eqs. (\ref{Def1: Phi^m_k}) and (\ref{complete Hessenbergian}) coincide. Moreover, we remark that any term, say $\phi_l(n)$, of the first column of $\BGF_{t,r}^{(m)}$ satisfies: $n-l=r+1-m$.
	
	First we state and prove the following Lemma:
	\begin{lemma}\label{Lemma cofactor}
		{\rm i)}  The cofactor of the coefficient $\phi_{m+i-1}(r+i)$ for $1\le i\le t-r$ in the first column of $\BGF^{(m)}_{t,r}$ in eq. {\rm (\ref{complete Hessenbergian})} coincides with $\xi_{t,r+i}$. 
		{\rm ii)} The cofactor of the coefficient $\phi_{n}(t)$ for $1\le n\le t-r-1$, in the last row
		of $\xi_{t,r}^{(m)}$ coincides with $\xi^{(m)}_{t-n,r}$. In particular, If $n=m+t-r-1$, then $\xi^{(m)}_{t-n,r}=\xi^{(m)}_{r+1-m,r}=1$.
	\end{lemma}
	\begin{proof}
		i) Let us call $M^{(0)}_{1,1}$ the minor of the $(1,1)$ entry of $\BGF^{(m)}_{t,r}$ (occupied by $\phi_{m}(r+1)$). $M^{(0)}_{1,1}$ is the determinant of the $(t-r-1)\times(t-r-1)$ submatrix of $\BGF^{(m)}_{t,r}$, obtained by deleting the first row and column of $\BGF^{(m)}_{t,r}$. It follows directly that $M^{(0)}_{1,1}=\xi_{t,r+1}$. The latter result multiplied by $(-1)^{1+1}$ yields the cofactor of $\phi_{m}(r+1)$, that is:  $Cof[\phi_{m}(r+1)]=(-1)^{1+1}\xi_{t,r+1}=\xi_{t,r+1}$. In view of eq. (\ref{complete Hessenbergian}), it remains to show the assertion for any $i$ such that $1<i\le m+t-r-1$. 
		The minor of the $(i,1)$ entry of $\BGF^{(m)}_{t,r}$ (occupied by $\phi_{m+i-1}(r+i)$), call it as $M^{(0)}_{i,1}$,  is obtained by deleting the first column and the $i$th row of $\BGF^{(m)}_{t,r}$, that is:\vspace{0.12in}
		\[\hspace{-6.in} M^{(0)}_{i,1}=  \]
		{\small \begin{equation*}
			\left\vert
			\begin{array}[c]{lcccccccc}%
			\ \ \ \ \ \  -1 &  &  &  & &&&&\\
			\phi_{1} (r+2) & -1 &  &  & \\
			\phi_{2} (r+3)  &  \phi_{1} (r+3) &  & \vspace{-0.02in} \\  
			\ \ \ \ \ \vdots & \vdots &  &  &  &  &  & \vspace{-0.01in}\\
			\phi_{i-2} (r+i-1) & \phi_{i-3} (r+i-1) & ... & -1 & 0 &  &   & \\
			&&&&&&&&\vspace{-0.02in}\\
			\phi_{i} (r+i+1) & \phi_{i-1} (r+i+1) & ... & \phi_{2} (r+i+1) & \phi_{1} (r+i+1) & -1 & \vspace{-0.02in}\\
			\ \ \ \ \  \vdots & \vdots &  & \vdots &\vdots & &  &   \vspace{-0.01in}\\
			\phi_{t-r-2} (t-1) &  \phi_{t-r-3}(t-1) &\cdots &\phi_{t-1-r-i}(t-1) &\phi_{t-2-r-i}(t-1) & \phi_{t-3-r-i}(t-1) & \cdots 
			&  \phi_{1} (t-1) & -1\\
			\phi_{t-r-1} (t) &  \phi_{t-r-2} (t) &... &\phi_{t-r-i}(t)  &\phi_{t-1-r-i}(t) &\phi_{t-2-r-i}(t)&...& \phi_{2} (t) &  \phi_{1} (t)
			\end{array}
			\right\vert. \vspace{0.05in}
			\end{equation*}}
		We observe that the elements in the main diagonal  of $M^{(0)}_{i,1}$ up to and including the entry $(i-1,i-1)$ are occupied by $(-1)$s, while the main diagonal element next to $(i-1,i-1)$ is occupied by $\phi_{1}(r+i+1)$. In what follows we compute $M^{(0)}_{i,1}$ recursively.
		Deleting the first row and column of $M^{(0)}_{i,1}$ the determinant of the resulting submatrix is the minor of the $(1,1)$ entry of $M^{(0)}_{i,1}$, denoted by: $M^{(1)}_{i,1}$.  
		Proceeding in this way we denote  $M^{(j)}_{i,1}$ for $1\le j\le i-1$, the minor of the $(1,1)$ entry of $M^{(j-1)}_{i,1}$, that is the $(j,j)$  element of $M^{(0)}_{i,1}$, occupied by $(-1)$.  
		As the first row of $M^{(j-1)}_{i,1}$  is $(-1,0,...,0)$ for any  $j\in\lbracket1,i-1\rbracket$, expanding $M^{(j-1)}_{i,1}$ along the first row, we obtain the recurrence:%\vspace{-0.02in}
		\begin{equation}\label{recurrence}
		M^{(j-1)}_{i,1}=(-1)M^{(j)}_{i,1},\ \ \ \ \   1\le j\le i-1.
		\end{equation}
		In particular, if $j=1$ the recurrence in eq.  (\ref{recurrence}) gives: $M^{(0)}_{i,1}=(-1)M^{(1)}_{i,1}$.
		If $j=i-1$, the minor of the $(1,1)$ entry of  $M^{(i-2)}_{i,1}$, that is the $(i-1,i-1)$ element of $M^{(0)}_{i,1}$  (occupied by the last $(-1)$ in the main diagonal of $M^{(0)}_{i,1}$), is given by:\vspace{-0.16in}
		\begin{equation*}
		M^{(i-1)}_{i,1}=\left\vert\begin{array}
		[c]{ccccc}
			\phi_{m}(r+i+1) &  -1 &  &   & \\
		\phi_{m+1}(r+i+2) & \phi_{1}(r+i+2) & \ddots &
		& \vspace{-0.02in}\\
		\vdots & \vdots  & \ddots & \ddots & \vspace{-0.01in}\\
		\phi_{t-2-r-i}(t-1) & \phi_{t-3-r-i}(t-1) & \cdots &  \phi_{1} (t-1) & -1\\
		\phi_{t-1-r-i}(t) &\phi_{t-2-r-i}(t)&\cdots & \phi_{2} (t) &  \phi_{1} (t)
		\end{array}
		\right\vert=\xi_{t,r+i}.
		\end{equation*}	
	The recurrence (\ref{recurrence}) yields:
		$
		M^{(0)}_{i,1}=(-1)M^{(1)}_{i,j}=(-1)^2M^{(2)}_{i,j} =...=(-1)^{i-1}M^{(i-1)}_{i,j} = (-1)^{i-1}\xi_{t,r+i}$.
		Accordingly the cofactor of the $(i,1)$ entry of $\BGF^{(m)}_{t,r}$ is $Cof[\phi_{i-1}(r+i)]=(-1)^{i+1}M^{(0)}_{i,1}=(-1)^{i+1}(-1)^{i-1}\xi_{t,r+i}=\xi_{t,r+i}$, as claimed.\\
		ii) By working as in part (i), but expanding $\BGF^{(m)}_{t,r}$ along the last row (instead of the first column),  the result follows.	In particular, if $n=m+t-r-1$, then since the number of   $(-1)$s in the superdiagonal of $\BGF^{(m)}_{t,r}$ in eq. (\ref{complete Hessenbergian}) is $(t-r-1)$ and $\phi_{m+t-r-1}(t)$ occupies its $(t-r,1)$ entry, it follows that: $Cof[\phi_{m+t-r-1}(t)]=(-1)^{t-r+1}(-1)^{t-r-1}=(-1)^{2(t-r)}=1$, as expected.
	\end{proof}
	As a direct consequence of Lemma \ref{Lemma cofactor}(ii), the cofactor expansion of $\xi^{(m)}_{t,r}$ along the last row gives%\vspace{-0.02in}
	\begin{equation}\label{eq. ksi recurrence}
	\xi^{(m)}_{t,r}=\phi_1(t)\xi^{(m)}_{t-1,r}+\phi_2(t)\xi^{(m)}_{t-2,r}+...+\phi_p(t)\xi^{(m)}_{t-p,r}\ \text{for}\ t\ge r+1,
	\end{equation}
	which re-establishes the second part in the proof of Proposition \ref{KSIs solutions}. In Proposition \ref{prop. Casoratian Property} of Appendix \ref{Appendix: ProofsOfPropositions}, the recurrence in eq. (\ref{eq. ksi recurrence}) is employed to show from first principles a property of Casoratians associated with VC-LDEs($p$)  (see eq. (2.5), p. 39, in \cite{MiLD68}), that is, they satisfy a first order linear recurrence. 
	
	\begin{proposition} \label{prop. global feature}
		The terms  $\xi^{(m)}_{t,r}$ for $1\le m\le p$ and $t\ge r+1$ of each fundamental solution $\xi^{(m)}_{.,r}$ can be expressed in terms of the principal determinant function as:
		\begin{equation}\label{xi^m_k}
		\xi^{(m)}_{t,r}= \displaystyle\sum_{i=1}^{p-m+1}\phi_{m-1+i}(r+i)\xi_{t,r+i}.
		\end{equation} 
	\end{proposition}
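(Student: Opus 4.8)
The plan is to obtain \eqref{xi^m_k} from a single Laplace (cofactor) expansion of the Hessenbergian $\xi^{(m)}_{t,s}=\det(\Phi^{(m)}_{t,s})$ along its first column, reading off the cofactors from Lemma~\ref{Lemma cofactor}(i). Throughout we take $t>s$, so that $\Phi^{(m)}_{t,s}$ is a genuine $(t-s)\times(t-s)$ matrix and every $\xi_{t,s+j}$ occurring in \eqref{xi^m_k} is well defined; this is the setting in which the full-Hessenbergian form \eqref{complete Hessenbergian} is being used.

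From \eqref{complete Hessenbergian} (legitimate under the convention \eqref{Convention1}) the entries of the first column of $\Phi^{(m)}_{t,s}$ are exactly $\phi_{m+i-1}(s+i)$ in rows $i=1,\dots,t-s$. Expanding the determinant along that column gives
\[
\xi^{(m)}_{t,s}=\sum_{i=1}^{t-s}\phi_{m+i-1}(s+i)\,Cof(i,1),
\]
and Lemma~\ref{Lemma cofactor}(i) identifies $Cof(i,1)=\xi_{t,s+i}$, the principal determinant based at $s+i$ (for $i=t-s$ this is $\xi_{t,t}=1$). Hence $\xi^{(m)}_{t,s}=\sum_{i=1}^{t-s}\phi_{m+i-1}(s+i)\,\xi_{t,s+i}$, and it only remains to match this summation range with $1\le j\le p-m+1$. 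If $t-s>p-m+1$, then for every $i>p-m+1$ we have $m+i-1>p$, so $\phi_{m+i-1}(s+i)=0$ by \eqref{Convention1} and the sum truncates to $i\le p-m+1$. If $t-s<p-m+1$, then for the extra indices $j$ with $t-s<j\le p-m+1$ one checks $(s+j)-p+1\le t<s+j$ (using $m\ge1$ and $t-s\ge1$), so $\xi_{t,s+j}=\xi^{(1)}_{t,s+j}=0$ by the boundary clause of \eqref{Exdef:xi^m}, and these null terms may be adjoined without change. In either case the sum equals $\sum_{j=1}^{p-m+1}\phi_{m+j-1}(s+j)\,\xi_{t,s+j}$, which is \eqref{xi^m_k}.

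The only genuine obstacle is Lemma~\ref{Lemma cofactor}(i) itself, which is already established: its delicate ingredient is the chain of first-row expansions $\M^{(j-1)}_{i,1}=-\M^{(j)}_{i,1}$ that converts the run of $-1$'s on the superdiagonal of the $(i,1)$-minor into the factor $(-1)^{i-1}$ cancelling the $(-1)^{i+1}$ of the cofactor. Granted that lemma, the remaining work is purely the index-range bookkeeping above, together with the observation that the $-1$'s already built into $\Phi^{(m)}_{t,s}$ make all signs in the Laplace expansion come out automatically, so no separate sign tracking is needed.
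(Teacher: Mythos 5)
Your proof is correct and follows essentially the same route as the paper's: a cofactor expansion of $\xi^{(m)}_{t,s}$ along the first column, with Lemma~\ref{Lemma cofactor}(i) supplying $Cof(i,1)=\xi_{t,s+i}$, followed by a re-indexing of the sum. Your explicit bookkeeping for why the summation range can be taken as $1\le j\le p-m+1$ in both the cases $t-s>p-m+1$ and $t-s<p-m+1$ is a welcome elaboration of a step the paper leaves implicit.
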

	\begin{proof}
		Applying Lemma \ref{Lemma cofactor}(i) to $\xi^{(m)}_{t,r}$ in eq. (\ref{Def1: Phi^m_k}), the cofactor expansion of $\xi^{(m)}_{t,r}$ along the first column gives $\xi^{(m)}_{t,r}=\phi_{m}(r+1)\xi_{t,r+1}+\phi_{m+1}(r+2)\xi_{t,r+2}+...+\phi_{p}(r+p+1-m)\xi_{t,r+p+1-m}$, whence
		$$\xi^{(m)}_{t,r}=\displaystyle\sum_{i=m}^{p}\phi_{i}(r+i+1-m)\xi_{t,r+i+1-m}= \displaystyle\sum_{i=1}^{p+1-m}\phi_{m-1+i}(r+i)\xi_{t,r+i},\vspace{-0.02in}$$ 
		as required. 
	\end{proof}
	Taking into account that $\xi_{t,r}= \sum_{i=1}^{p}\phi_{i}(r+i)\xi_{t,r+i}$, by applying eq. (\ref{xi^m_k}) to eq.  (\ref{homogeneous solution0}), we obtain explicit representations of the general solution $y_t$ in eq. (\ref{HLDE001}) solely in terms of the principal determinant function and any sequence of prescribed values  $\{y_{r+1-m}\}_{1\le m\le p}$ for a fixed $r\ge s$, as follows:
\begin{equation}\label{eq. general homogeneous solution}
y_{t}=\displaystyle\sum_{m=1}^{p}\sum_{i=1}^{p+1-m}\phi_{m-1+i}(r+i)\xi_{t,r+i}y_{r+1-m}=\xi_{t,r}y_{r}+\displaystyle\sum_{m=2}^{p}\sum_{i=1}^{p+1-m}\phi_{m-1+i}(r+i)\xi_{t,r+i}y_{r+1-m} \   \  \text{for all} \  t\ge r+1. \vspace{-0.02in}
\end{equation}
	\subsection{Companion Matrix Product}\label{subs. Companion Matrix Product}
	We show in the current Subsection that the elements of the product of companion matrices associated with the difference operator in  eq. (\ref{eq. LDO}) can be explicitly represented by banded Hessenbergians. 
	
	Let $t\in\integers_{s+1}$. The companion matrix of order $p$ is given by\vspace{-0.02in}
	\begin{equation}\label{companion matrix}
	\BGAMMA_{t}=\left[\begin{array}{ccccc}
	\phi_1(t)& \phi_2(t) & ... & \phi_{p-1}(t) & \phi_p(t)\\
	1& 0 &... & 0 & 0 \\
	0& 1 &... & 0 & 0 \vspace{-0.05in}\\
	.&.&...&.&.\vspace{-0.1in}\\
	.&.&...&.&.\vspace{-0.1in}\\
	.&.&...&.&.\\
	0 & 0 &...& 1 & 0\end{array}\right].\vspace{-0.02in}
	\end{equation}
%	Let  $\y_{t}=[y_{t},y_{t-1},...,y_{t-p+1}]'$ for $t\ge s+1$ be the solution vector associated with eq. (\ref{HLDE001}). Now 
Eq. (\ref{HLDE001}) for $r\ge s$ can be expressed, as a vector equation, by:\vspace{-0.01in} 
	\begin{equation} \label{companion matrices1}
	\y_{t} = \BGAMMA_{t}\ \y_{t-1},\ \  t\ge r+1. \vspace{-0.01in}
	\end{equation}
	 An extended Definition of the companion matrix product, including the case $t=r$,  is given by
	\begin{equation}\label{product of companion matrices1}
	\F_{t,r}\stackrel{\rm def}{=}\left\{\begin{array}{cl}
	\BGAMMA_{t}\BGAMMA_{t-1}...\BGAMMA_{r+1},  & \text{if}\ t\ge r+1 \\
	\BI_p, & \text{if}\ t= r.
	\end{array}\right. 
	\end{equation}
	$\F_{t,r}$ is invertible, since $\BGAMMA_{i}$ is invertible for all $i\in \integers_{s+1}$.  As $\F_{r,r}=\BI_p$, we further conclude that $\F_{t,r}$  is invertible for all $t\in \integers_{r}$ and any $r\ge s$. Taking into account that the matrix multiplication is non-commutative, we can alternatively use the condense notation: $\BGAMMA_{t}\BGAMMA_{t-1}...\BGAMMA_{r+1}=\prod_{i=r}^{t-1}\BGAMMA_{t-i+r}$.
	
	Let $\y_{r}=[y_{r},y_{r-1},...,y_{r-p+1}]'$ be an initial condition vector associated with eq. (\ref{HLDE001}). Then the unique vector solution of the corresponding initial value problem associated with  eq. (\ref{HLDE001}) can also be described by the vector equation: \vspace{-0.05in} 
	\begin{equation} \label{product of companion matrices2}
	\y_{t}=\F_{t,r}\ \y_{r} \  \  \text{for} \  t\ge r. 
	\end{equation}
	This is an alternative interpretation to the solution in eq. (\ref{homogeneous solution0}). If $t=r$, then $\y_r=\BI_p\ \y_{r}$, as expected.
	
	In all that follows $\BXI_{t,r}$ stands for the Casorati matrix defined in eq. (\ref{Casorati matrices}). Before proving the main result of this Subsection in Theorem \ref{theo: explicit PCM}, we recall an elementary result from linear algebra:
	\begin{remark}\label{Matrix Algebra1}
		Let $\A, \B$ be $k\times k$ complex matrices. 
		%As $\A$ and $\B$ can be identified as linear functions over the same domain and co-domain spaces, it follows that: 
		If $\A\  \x = \B\ \x$ for all $\x\in \complex^{\it k}$, then $\A=\B$.
	\end{remark} 
	\begin{theorem}\label{theo: explicit PCM}
		The product of companion matrices $\F_{t,r}$ associated with eq. \emph{(\ref{HLDE001})} coincides with the Casorati matrix $\BXI_{t,r}$  for all $t\in\integers_{r}$ and any fixed $r\ge s$, given by eq. {\rm (\ref{Casorati matrices})}. 
	\end{theorem}
	\begin{proof}	
		Let $\y_{r}=[y_{r},y_{r-1},...,y_{r-p+1}]'$ be an arbitrary  initial condition vector.  Applying eq. (\ref{homogeneous solution0}) for $t,t-1,...,t-p+1$, the components of the solution vector $\y_t$ associated with eq. (\ref{HLDE001})  are given by:
		\[\begin{array}{ccccccc}
		y_{t}&=&\xi^{(1)}_{t,r}y_{r}&+&\xi^{(2)}_{t,r}y_{r-1}&+...+&\xi^{(p)}_{t,r}y_{r-p+1}\\
		y_{t-1}&=&\xi^{(1)}_{t-1,r}y_{r}&+&\xi^{(2)}_{t-1,r}y_{r-1}&+...+&\xi^{(p)}_{t-1,r}y_{r-p+1}\vspace{-0.02in}\\
		\vdots&&\vdots&&\vdots&&\vdots\vspace{-0.01in}\\
		y_{t-p+1}&=&\xi^{(1)}_{t-p+1,r}y_{r}&+&\xi^{(2)}_{t-p+1,r}y_{r-1}&+...+&\xi^{(p)}_{t-p+1,r}y_{r-p+1}.
		\end{array}\]
		The above $p\times p$ system of linear equations can be expressed in a vector equation form as: 
		\begin{equation}\label{coefficient of product matrix1}
		\y_t=\BXI_{t,r}\cdot \y_{r}\  \  \text{for} \  t\ge r.
		\end{equation}
		A comparison of eqs. (\ref{product of companion matrices2}) and (\ref{coefficient of product matrix1}), on account of the uniqueness of the solution vector $\y_{t}$, implies that: $\BXI_{t,r}\ \y_{r}=\F_{t,r}\ \y_{r}$ for all $\y_{r}\in \complex^{\it p}$. It follows from Remark \ref{Matrix Algebra1} that 
		\begin{equation}\label{matrix equations1}
		\BXI_{t,r}=\F_{t,r},
		\end{equation}
		as asserted.
	\end{proof}
	By virtue  of eq. (\ref{matrix equations1}) the entries of $\F_{t,r}$ for $t\ge r+1$ are the banded Hessenbergians,  whose elements are explicitly expressed in terms of the variable coefficients of eq. {\rm (\ref{HLDE001})}. 
	As $\F_{t,r}$ is invertible, we conclude from eq. (\ref{matrix equations1}) that $\BXI_{t,r}$ is invertible too. This statement recovers the result stated in Corollary \ref{cor: invertible}. In the following Example we apply Theorem \ref{theo: explicit PCM} to the second order VC-LDE($2$).
	\begin{example}
	{\rm	In this example we consider the second order homogeneous  VC-LDE: }
		$$\displaystyle y_{t}=\phi_1(t)y_{t-1}+\phi_2(t)y_{t-2}.$$
	\end{example}
	\noindent Let $s\in\integers$ and $\phi_2(t)\not=0$ for all $t\ge s+1$.
	The Definition in eqs. (\ref{Exdef:xi^m}) and (\ref{Def1: Phi^m_k}) is applied for $r=t-1, t-2, t-3$ to verify the identity in eq.  (\ref{coefficient of product matrix1}), assuming that $r\ge s$.\\ 
	i) If $r=t-1\ge s$, then we conclude that: \ \
	$\xi_{t,t-1}=\phi_1(t),\ \xi^{(2)}_{t,t-1}=\phi_2(t), \xi_{t-1,t-1}=1,\ \xi^{(2)}_{t-1,t-1}=0$.\\
	The associated companion matrix is given by
	\[\F_{t,t-1}=\BGAMMA_{t}=\left[\begin{array}{cc}
	\phi_1(t) & \phi_2(t) \\
	1     &   0 \end{array}\right]=\left[\begin{array}{cc}
	\xi_{t,t-1} & \xi^{(2)}_{t,t-1} \\
	\xi_{t-1,t-1}     &  \xi^{(2)}_{t-1,t-1} \end{array}\right]=\BXI_{t,t-1}.
	\]
	ii) If $r=t-2\ge s$, then we conclude that:
	\[\xi_{t,t-2}=\left|\begin{array}{cc}
	\phi_1(t-1) & -1 \\
	\phi_2(t) & \phi_1(t) 
	\end{array}\right|, \ \xi^{(2)}_{t,t-2}=\left|\begin{array}{cc}
	\phi_2(t-1) & -1 \\
	0 & \phi_1(t) 
	\end{array}\right|,\ \xi_{t-1,t-2}=\phi_1(t-1),\ \xi^{(2)}_{t-1,t-2}=\phi_2(t-1). \]
	The product of the first two companion matrices is given by
	\[\begin{array}{lll}
	\F_{t,t-2}=\BGAMMA_{t}\BGAMMA_{t-1}=\left[\begin{array}{cc}
	\phi_1(t) & \phi_2(t) \\
	1     &   0 \end{array}\right]\left[\begin{array}{cc}
	\phi_1(t-1) & \phi_2(t-1) \\
	1     &   0 \end{array}\right]\!\!\!\!&=&\!\!\!\!
	\left[\begin{array}{cc}
	\phi_1(t)\phi_1(t-1)+\phi_2(t) & \phi_1(t)\phi_2(t-1) \\
	\phi_1(t-1)     &   \phi_2(t-1) \end{array}\right]\\\\
	&=&\left[\begin{array}{cc}
	\xi_{t,t-2} & \xi^{(2)}_{t,t-2} \\
	\xi_{t-1,t-2}     &  \xi^{(2)}_{t-1,t-2} \end{array}\right]=\BXI_{t,t-2}.
	\end{array}
	\]
	iii) If $r=t-3\ge s$, then we conclude that:
	\begin{align*}
	\xi_{t,t-3}=&\left|\begin{array}{ccc}
	\phi_1(t-2) &  -1 & 0\\
	\phi_2(t-1) & \phi_1(t-1) & -1\\
	0 & \phi_2(t) & \phi_1(t)  
	\end{array}\right|=\phi_1(t-2)[\phi_1(t)\phi_1(t-1)+\phi_2(t)]+\phi_2(t-1)\phi_1(t),\\\\ 
	\xi^{(2)}_{t,t-3}=&\left|\begin{array}{ccc}
	\phi_2(t-2) & -1 & 0\\
	0 & \phi_1(t-1) & -1\\
	0 & \phi_2(t) & \phi_1(t) 
	\end{array}\right|=\phi_2(t-2)[\phi_1(t)\phi_1(t-1)+\phi_2(t)],\\\\
	\xi_{t-1,t-3}=&\left|\begin{array}{cc}
	\phi_1(t-2) & -1 \\
	\phi_2(t-1) & \phi_1(t-1) \\
	\end{array}\right|=\phi_1(t-1)\phi_1(t-2)+\phi_2(t-1),\\\\
	\xi^{(2)}_{t-1,t-3}=&\left|\begin{array}{cc}
	\phi_2(t-2) & -1 \\
	0 & \phi_1(t-1) \\
	\end{array}\right|=\phi_1(t-1)\phi_2(t-2).
	\end{align*}
	The product of the first three companion matrices is given by
	given by
	\[\begin{array}{lll}
	\F_{t,t-3}=\BGAMMA_{t}\BGAMMA_{t-1}\BGAMMA_{t-2}\!\!\!\!&= &\!\!\!\! \left[\begin{array}{cc}
	\phi_1(t) & \phi_2(t) \\
	1     &   0 \end{array}\right]\left[\begin{array}{cc}
	\phi_1(t-1) & \phi_2(t-1) \\
	1     &   0 \end{array}\right]\left[\begin{array}{cc}
	\phi_1(t-2) & \phi_2(t-2) \\
	1     &   0 \end{array}\right]\\\\
	&=&\!\!\!\!\left[\begin{array}{cc}
	\phi_1(t)\phi_1(t-1)+\phi_2(t) & \phi_1(t)\phi_2(t-1) \\
	\phi_1(t-1)     &   \phi_2(t-1) \end{array}\right]\left[\begin{array}{cc}
	\phi_1(t-2) & \phi_2(t-2) \\
	1     &   0 \end{array}\right]\\\\
	&=&\!\!\!\!\left[\begin{array}{cc}
	\phi_1(t\!-\!2)[\phi_1(t)\phi_1(t\!-\!1)+\phi_2(t)]+\phi_1(t)\phi_2(t\!-\!1)  & \phi_2(t\!-\!2)[\phi_1(t)\phi_1(t\!-\!1)+\phi_2(t)] \\
	\phi_1(t-1)\phi_1(t-2)+\phi_2(t-1)     &   \phi_1(t-1)\phi_2(t-2) \end{array}\right]\\\\
	&=&\!\!\!\!\left[\begin{array}{cc}
	\xi_{t,t-3} & \xi^{(2)}_{t,t-3} \\
	\xi_{t-1,t-3}     &  \xi^{(2)}_{t-1,t-3} \end{array}\right]=
	\BXI_{t,t-3}.
	\end{array}
	\]
	\section{One Sided Green's Function}
%The  Green's function formula expressed as a ratio of two determinants (see \cite{MiLD68} p. 39 eq. (2.6) or  \cite{AGLDE00} p. 77 eq. (2.11.7)), presupposes a set of $p$ fundamental solutions. 
%In this Section, the Green's function is explicitly expressed in terms of the coefficients of eq. (\ref{HLDE001}) via the fundamental solution set $\varXi_{s}$ (see Theorem \ref{theo: Explicit Green's Function}). 
As the Green's function determinant ratio formula (see \cite{MiLD68}, eq. (2.6), p. 39  or  \cite{AGLDE00}, eq. (2.11.7), p. 77) is independent of the choice of the fundamental solution set, having the set $\varXi_{s}$ at our disposal, the Green's function is explicitly represented in Theorem \ref{theo: Explicit Green's Function} of this Section.  Moreover, Theorem \ref{theo. extension of the Green's function domain} shows that a domain restriction of the Green's function coincides with the corresponding restriction of the principal determinant function. This result enables us to express the general homogeneous solution of VC-LDEs($p$) explicitly in terms of the Green's function in eq. (\ref{Green's representation of the homog. solution}) below. Some fundamental properties of the Green's function are also recovered. We start our discussion with an analogous representation of the Green's matrix. 
	
%Let $r\ge s+1$. Let also  $\y_{r}=[y_{r},y_{r-1},...,y_{r-p+1}]'$ be a given vector. Applying the matrix $\BGAMMA^{-1}_{r}$ to both sides of 
%$\y_{r} = \BGAMMA_{r}\ \y_{r-1}$ (see eq. (\ref{companion matrices1})), the vector $\y_{r-1}$ is uniquely determined by:
%	\begin{equation} \label{companion matrices3}
%	\y_{r-1} = \BGAMMA^{-1}_{r}\ \y_{r}.
%	\end{equation}
%Inductively, given an arbitrary vector  $\y_{r}$ for $r\ge s$ and using the definition of the companion matrix product in eq. (\ref{product of companion matrices1}), the vector $\y_{s}$ is uniquely determined by: 
%\begin{equation*}
%\y_s=\BGAMMA^{-1}_{s+1}\BGAMMA^{-1}_{s}...\BGAMMA^{-1}_{r}\y_r =(\BGAMMA_{r}\BGAMMA_{r-1}...\BGAMMA_{s+1})^{-1}\  \y_r = \F^{-1}_{r,s}\ \y_r.
%\end{equation*}	
%As $\y_r$ was arbitrarily chosen the following matrix equations follow from Remark \ref{{Matrix Algebra1}
%\begin{equation}\label{eqs. CMP}
%\BGAMMA^{-1}_{s+1}\BGAMMA^{-1}_{s}...\BGAMMA^{-1}_{r} =(\BGAMMA_{r}\BGAMMA_{r-1}...\BGAMMA_{s+1})^{-1}\  = \F^{-1}_{r,s}.
%\end{equation}	
Let  $(t,r)\in \integers^2_{s}$.  The Green's matrix (see \cite{MiLD68}, p. 14) associated with the difference operator in eq. (\ref{eq. LDO}) is a two variable function defined via the companion matrix product $\F_{t,s}$ as follows:
	\begin{equation}\label{eq. Green's Matrix0}
	\G_{t,r}=\F_{t,s}\ \F^{-1}_{r,s}.
	\end{equation}
	As $t\ge s$ and $r\ge s$,  Theorem \ref{theo: explicit PCM} entails that $\F_{t,s}=\BXI_{t,s}$ and $\F_{r,s}=\BXI_{r,s}$, therefore eq. (\ref{eq. Green's Matrix0}) can be  written in terms of Casorati matrices  as:\vspace{-0.02in}
	\begin{equation}\label{eq. Green's Matrix}
	\G_{t,r}=\BXI_{t,s}\ \BXI^{-1}_{r,s}.
	\end{equation}
	Following Miller (see \cite{MiLD68}, p. 39), the Green's function $H(t,r)$ associated the difference operator in eq. (\ref{eq. LDO}) is defined to be the entry in the upper left-hand corner of $\G_{t,r}$, that is 
	\begin{equation}\label{Definition of Green function}
	H(t,r)\stackrel{\rm def}{=\joinrel=}\Unit_1\ \G_{t,r}\ \Unit'_1,
	\end{equation} 
	where $\Unit_1$ is the row unit vector: $\Unit_1=[1,0,...,0]$. An extension of the above Definition of $H(t,r)$ to cover all the domain values $(t,r)\in \integers_{s+1-p}\times \integers_{s}$ is given in Theorem \ref{theo: Explicit Green's Function}.
	
	In Lemma \ref{lem. restriction of the Green function} and Theorems \ref{theo: Explicit Green's Function} and \ref{theo. extension of the Green's function domain} below, we provide an explicit representation of the Green's function in terms of banded Hessenbergians. Let $t\ge s$. We define the sets: $\FX_{t}=\{(t,s),(t,s+1),...,(t,t)\}$ and $\FY_s=\bigcup_{t\ge s}\FX_{t}$. It follows that $(t,r)\in \FY_{s}$ if and only if $(t,r)\in \integers_{s}\times\integers_s$ and  $r \le t$, whence  $\FY_{s}\subset\integers_{s}\times\integers_s$.
	\begin{lemma}\label{lem. restriction of the Green function}
		Let $H(t,r)|_{\FY_s}$ be the restriction of the Green's function, $H(t,r)$, to  $\FY_s$. Then  $H(t,r)|_{\FY_s}=\xi_{t,r}$. In particular, if $t=r$, then $H(t,t)=H(t,t)|_{\FY_s}=\xi_{t,t}=1$, that is a well known property of the Green's function.		 
		If $t \ge r+1$, then $H(t,r)|_{\FY_s}$ can be represented by the principal determinant function, that is \emph{:}
		\begin{equation} \label{Explicit Green's function1}
		H(t,r)|_{\FY_s}=
		\left|\!\! 
		\begin{array}{ccccccc}
		\phi_{1}(r+1) & -1 &  &  &  &  &    \\ 
		\phi_{2}(r+2) & \phi _{1}(r+2) & \ddots
		&  &  &  &    \vspace{-0.02in}\\ 
		\vdots & \vdots & \ddots &\ \  \ddots &  &  &   \vspace{-0.06in}\\ 
		\phi_{p}(r+p) & \phi _{p-1}(r +p) & \ddots
		& \ddots &  &  &    \vspace{-0.02in}\\ 	
		& \vdots & \ddots & \ddots & \ \ \ \ \ddots  &  &      \vspace{-0.06in}\\
		& \phi_{p}(r+p+1) & \ddots
		& \ddots & \ddots &\ddots &    \\ 
		&  & \ddots & \ddots & \ddots &  \ddots &\ddots  \\ 
		&  &  & \phi_{p}(t-1) & \phi_{p-1}(t-1)& \cdots  &  -1 \\
		&  &  &  & \phi _{p}(t) & \cdots
		& \phi_{1}(t)\end{array}\!\! \right| 
		\end{equation}
	\end{lemma}
	\begin{proof}
Starting with the Definition in eq. (\ref{eq. Green's Matrix0}) the following equalities hold:
	\begin{equation*}
\begin{array}{ccl}
	\G_{t,r}&=& \F_{t,s}\F^{-1}_{r,s}\\ 
\text{(apply twice the Definition in eq. (\ref{product of companion matrices1}))}	
	&=&(\BGAMMA_{t}\BGAMMA_{t-1}...\BGAMMA_{r+1}\BGAMMA_{r}\BGAMMA_{r-1}...\BGAMMA_{s+1})(\BGAMMA_{r}\BGAMMA_{r-1}...\BGAMMA_{s+1})^{-1}\\
\text{(by an elementary property of invertible matrices)}	&=& 
	\BGAMMA_{t}\BGAMMA_{t-1}...\BGAMMA_{r+1}\BGAMMA_{r}\BGAMMA_{r-1}...\BGAMMA_{s+1}\BGAMMA^{-1}_{s+1}...\BGAMMA^{-1}_{r-1}\BGAMMA^{-1}_{r}\\
(\text{since}\ s\le r\le t)	&=& 	\BGAMMA_{t}\BGAMMA_{t-1}...\BGAMMA_{r+1}\\
\text{(apply the Definition in eq. (\ref{product of companion matrices1}))}	&=&\F_{t,r}\\
\text{(by Theorem \ref{theo: explicit PCM})}	&=&\BXI_{t,r}.
\end{array}
	\end{equation*}
	Applying the above result to the Definition  of the Green's function in eq. (\ref{Definition of Green function}), on account of eq. (\ref{Casorati matrices}), we conclude that:
		$H(t,r)=\Unit_1\ \G_{t,r}\ \Unit'_1=\Unit_1\ \BXI_{t,r}\ \Unit'_1=\xi_{t,r}$
		for all $(t,r)\in \FY_s$, as asserted. If $t=r$, then the above result and the Definition of $\xi_{t,r}$ in eqs. (\ref{xi^m initial values}), applied for $m=1$, entail that
		$H(t,t)=H(t,t)|_{\FY_s}=\xi_{t,t}=1$. 
		Finally, if $t\ge r+1$,  then eq. (\ref{Explicit Green's function1}) follows from the Definition of $\xi_{t,r}$ below eq. (\ref{eq. Principal Matrix}).
	\end{proof}
	%An alternative indirect proof to Lemma  \ref{lem. restriction of the Green function} is presented at the end of Appendix \ref{Appendix: ProofsOfPropositions}. 
	In the following Theorem, we use the  set  $\varXi_s$, along with the  Definition in eq. (\ref{Definition of Green function}) to re-establish the determinant ratio  formula of the Green's function over the domain $\integers_{s+1-p}\times\integers_s$ (see the previously cited reference), but now in a fully explicit form expressed directly in terms of the elements of  $\varXi_s$ (see eq. (\ref{eq: Explicit Green's Function}) below) and therefore of the variable coefficients of eq. (\ref{VC-LDE(p)}). %As a further consequence of the Green's function interpretation in eq. (\ref{eq: Explicit Green's Function}), this  function can be expressed in terms of itself (see Subsection \ref{compact Green's Function Determinant-Ratio Formula}).
	\begin{theorem}\label{theo: Explicit Green's Function}
		The  Green's function $H(t,r)$ for  $(t,r)\in\integers_{s+1-p}\times\integers_s$ associated the difference operator in eq. {\rm(\ref{eq. LDO})} can be explicitly expressed as a ratio of determinants:
		\begin{equation}\label{eq: Explicit Green's Function}
		H(t,r)= \left|\begin{array}{cccc}
		\xi^{(1)}_{t,s} & \xi^{(2)}_{t,s}&...& \xi^{(p)}_{t,s}\\
		\xi^{(1)}_{r-1,s}& \xi^{(2)}_{r-1,s}&...& \xi^{(p)}_{r-1,s}\\
		\vdots & \vdots & \vdots\vdots\vdots & \vdots\\
		\xi^{(1)}_{r-p+1,s} & \xi^{(2)}_{r-p+1,s} &...& \xi^{(p)}_{r-p+1,s}
		\end{array}\right|\left|\begin{array}{cccc}
		\xi^{(1)}_{r,s} & \xi^{(2)}_{r,s}&...& \xi^{(p)}_{r,s}\\
		\xi^{(1)}_{r-1,s}& \xi^{(2)}_{r-1,s}&...& \xi^{(p)}_{r-1,s}\\\
		\vdots & \vdots & \vdots\vdots\vdots & \vdots\\
		\xi^{(1)}_{r-p+1,s} & \xi^{(2)}_{r-p+1,s} &...& \xi^{(p)}_{r-p+1,s}
		\end{array}\right|^{-1}.
		\end{equation}
	\end{theorem}
	\begin{proof}
	We remark that the elements in the first row of the two matrices involved in eq. (\ref{eq: Explicit Green's Function}),
	have the same cofactors and therefore  the cofactor expansion of the first of the above determinants, expanded along its first row, can be expressed as: 
	\begin{equation}\label{eq. numerator of Green function}
	\left|\begin{array}{cccc}
	\xi^{(1)}_{t,s} & \xi^{(2)}_{t,s}&...& \xi^{(p)}_{t,s}\\
	\xi^{(1)}_{r-1,s}& \xi^{(2)}_{r-1,s}&...& \xi^{(p)}_{r-1,s}\\
	\vdots & \vdots & \vdots\vdots\vdots & \vdots\\
	\xi^{(1)}_{r-p+1,s} & \xi^{(2)}_{r-p+1,s} &...& \xi^{(p)}_{r-p+1,s}
	\end{array}\right|=
	\xi^{(1)}_{t,s}Cof[\xi^{(1)}_{r,s}]+\xi^{(2)}_{t,s}Cof[\xi^{(2)}_{r,s}]+ ...+ \xi^{(p)}_{t,s}Cof[\xi^{(p)}_{r,s}]. 
	\end{equation}
	Let $(t,r)\in\integers_{s}\times\integers_s$. In view of eqs. (\ref{Definition of Green function}) and (\ref{eq. Green's Matrix})  and using the well known cofactor formula of the inverse matrix $\BXI^{-1}_{r,s}$, we have:
	\[\begin{array}{lll}
	H(t,r) &=& \Unit_1\  \G_{t,r} \Unit'_1
	= \Unit_1\ \BXI_{t,s}\ \BXI^{-1}_{r,s}\ \Unit'_1\\\\
	&=&  \Unit_1\left[\begin{array}{ccc}
	\xi^{(1)}_{t,s}& ...& \xi^{(p)}_{t,s}\\
	\xi^{(1)}_{t-1,s}&...& \xi^{(p)}_{t-1,s}\\
	\vdots  & \vdots\vdots\vdots & \vdots\\
	\xi^{(1)}_{t-p+1,s} &...& \xi^{(p)}_{t-p+1,s}
	\end{array}\right]\left[\begin{array}{ccc}
	\xi^{(1)}_{r,s} &...& \xi^{(p)}_{r,s}\\
	\xi^{(1)}_{r-1,s}&...& \xi^{(p)}_{r-1,s}\\
	\vdots  & \vdots\vdots\vdots & \vdots\\
	\xi^{(1)}_{r-p+1,s} &...& \xi^{(p)}_{r-p+1,s}
	\end{array}\right]^{-1}\left[\begin{array}{l}
		1\\
		0\\
		\vdots  \\
		0 \end{array}\right]\\\\
		&=&\Unit_1
		\left[\begin{array}{ccc}
		\xi^{(1)}_{t,s}& ...& \xi^{(p)}_{t,s}\\
		\xi^{(1)}_{t-1,s}&...& \xi^{(p)}_{t-1,s}\\
		\vdots  & \vdots\vdots\vdots & \vdots\\
		\xi^{(1)}_{t-p+1,s} &...& \xi^{(p)}_{t-p+1,s}
		\end{array}\right]\left(\frac{1}{\displaystyle\left|\BXI_{r,s}\right|}\left[\begin{array}{ccc}
		Cof[\xi^{(1)}_{r,s}]& ...& Cof[\xi^{(1)}_{r-p+1,s}]\\
		Cof[\xi^{(2)}_{r,s}]&...& Cof[\xi^{(2)}_{r-p+1,s}]\\
		\vdots  & \vdots\vdots\vdots & \vdots\\
		Cof[\xi^{(p)}_{r,s}] &...& Cof[\xi^{(p)}_{r-p+1,s}]
		\end{array}\right]\right)\left[\begin{array}{l}
		1\\
		0\\
		\vdots \\
		0 \end{array}\right]\\\\
		&=&\frac{\displaystyle\Unit_1}{\left|\displaystyle\BXI_{r,s}\right|}
		\left[\begin{array}{cccc}
		\xi^{(1)}_{t,s}&\xi^{(2)}_{t,s} & ...& \xi^{(p)}_{t,s}\\
		\xi^{(1)}_{t-1,s}&\xi^{(2)}_{t-1,s} &...& \xi^{(p)}_{t-1,s}\\
		\vdots  & \vdots  &\vdots\vdots\vdots & \vdots\\
		\xi^{(1)}_{t-p+1,s} &\xi^{(2)}_{t-p+1,s} &...& \xi^{(p)}_{t-p+1,s}
		\end{array}\right]\left[\begin{array}{c}
		Cof[\xi^{(1)}_{r,s}]\\
		Cof[\xi^{(2)}_{r,s}]\\
		\vdots  \\
		Cof[\xi^{(p)}_{r,s}]
		\end{array}\right]\\\\
		&=&\frac{\displaystyle[1,0,...,0]}{\displaystyle\left|\BXI_{r,s}\right|}
		\left[\begin{array}{ccccc}
		\xi^{(1)}_{t,s}Cof[\xi^{(1)}_{r,s}]&\!\! +\!\! &\xi^{(2)}_{t,s}Cof[\xi^{(2)}_{r,s}]&+\  ...\ +  & \xi^{(p)}_{t,s}Cof[\xi^{(p)}_{r,s}]\\
		\xi^{(1)}_{t-1,s}Cof[\xi^{(1)}_{r,s}]&\!\!+\!\!&\xi^{(2)}_{t-1,s}Cof[\xi^{(2)}_{r,s}]&+\ ...\ +& \xi^{(p)}_{t-1,s}Cof[\xi^{(p)}_{r,s}]\\
		\vdots&\vdots&\vdots&\vdots\ \vdots\vdots\vdots\ \vdots&\vdots \\
		\xi^{(1)}_{t-p+1,s}Cof[\xi^{(1)}_{r,s}] &\!\!+\!\!&\xi^{(2)}_{t-p+1,s}Cof[\xi^{(2)}_{r,s}]& +\ ...\ +& \xi^{(p)}_{t-p+1,s}Cof[\xi^{(p)}_{r,s}]
		\end{array}\right]\\\\
		&=& \left|\BXI_{r,s}\right|^{-1} (\xi^{(1)}_{t,s}Cof[\xi^{(1)}_{r,s}]+\xi^{(2)}_{t,s}Cof[\xi^{(2)}_{r,s}]+ ...+ \xi^{(p)}_{t,s}Cof[\xi^{(p)}_{r,s}]).
		\end{array}\]
		Comparing eq. (\ref{eq. numerator of Green function}) with the above last expression of $H(t,r)$ for $(t,r)\in\integers_{s}\times\integers_s$, eq. (\ref{eq: Explicit Green's Function}) follows. Moreover, for any $m\in\lbracket 1,p \rbracket$, $H(s+1-m,r)$ in 
		eq. (\ref{eq: Explicit Green's Function}) is well defined on the extended domain $\integers_{s+1-p}\times\integers_s$.
		\end{proof}	
An equivalent form of $H(t,r)$ for $s+1-p\le t\le s$ is derived below 
	 $$\begin{array}{lll}
		H(s+1-m,r)&=&\displaystyle\frac{\displaystyle \xi^{(1)}_{s+1-m,s}Cof[\xi^{(1)}_{r,s}]+\xi^{(2)}_{s+1-m,s}Cof[\xi^{(2)}_{r,s}]+ ...+ \xi^{(p)}_{s+1-m,s}Cof[\xi^{(p)}_{r,s}]}{\displaystyle\left|\BXI_{r,s}\right|}\\
		&& (\text{as}\ \xi^{(i)}_{s+1-m,s}=1, \text{if}\ i=m, \text{or $0$, if $i\not=m$})\\
		&=&\frac{\displaystyle Cof[\xi^{(m)}_{r,s}]}{\displaystyle\left|\BXI_{r,s}\right|}=\frac{\displaystyle (-1)^{m+1}}{\displaystyle\left|\BXI_{r,s}\right|}\left|\begin{array}{cccccc}
		\xi^{(1)}_{r-1,s}&... &\xi^{(m-1)}_{r-1,s}& \xi^{(m+1)}_{r-1,s}&...& \xi^{(p)}_{r-1,s}\\
		\vdots  & &\vdots&\vdots & & \vdots\\
		\xi^{(1)}_{r-p+1,s}& ... & \xi^{(m-1)}_{r-p+1,s} & \xi^{(m+1)}_{r-p+1,s} &...& \xi^{(p)}_{r-p+1,s}
		\end{array}\right|,
		\end{array}$$ 
	provided that if $m=1$ (resp. $m=p$), then the first two (resp. last two) columns illustrated in the latter determinant expression above are vanished. 

	Taking into account that $\{\xi^{(m)}_{.,s}\}_{1\le m \le p}$ are fundamental solutions defined for $t\in\integers_{s+1-p}$ (see Theorem \ref{Theo: fundamental solution set}), some crucial values of $H(t,r)$ on $\integers_{s+1-p}\times\integers_s$ are verified below  (see Theorem \ref{theo. extension of the Green's function domain} and the discussion below Corollary \ref{cor. Green's function  homogeneous solution representation}).
	%By virtue of eq. (\ref{xi^m_k}) in Proposition \ref{prop. global feature}, the elements of both determinants in the Green's function formula,  described by eq. (\ref{eq: Explicit Green's Function}), can be solely  expressed in terms of the principal determinant $\xi_{t,s}$ for $t\in\integers_{s+1}$.
	
	In the following Theorem, we further enlarge the domain $\FY_s$ of Lemma \ref{lem. restriction of the Green function}, showing that the Green's function $H(t,r)$ coincides with the principal determinant function $\xi_{t,r}$ on an extended domain $\FZ$ defined as follows: Let $p>1$ and  $\J_i=\{(s-i,s),(s-i+1,s+1),..., (t, t+i),...\}$ for $1\le i\le p-1$. Let us call $\J=\bigcup_{i=1}^{p-1}\J_i$ and  $\FZ=\FY_s\cup\J$. Notice that if $p=1$, then $\J=\emptyset$ and $\FZ=\FY_s$. In what follows we assume that $p\ge 2$. Formally, $(t,r)\in \J$ if and only if  (iff for short) there exists some  $i\in\lbracket 1, p-1\rbracket$ such that $r-t=i$ (or $r=t+i$). Equivalently $(t,t+i)\in \J$ iff $i\in\lbracket 1, p-1\rbracket$ and $t\in\integers_{s-i}$. As $t\ge s-i$, it follows that $t+i\ge s$, whence $(t+i)\in\integers_{s}$.
  \begin{lemma}\label{lem: restriction properties}
  The following statements hold:\\
  i)  $\J_i\cap \J_k=\emptyset$, whenever $i\not= k$.\\
  ii) $\FZ\subset\integers_{s+1-p}\times \integers_{s}$.\\  
  iii) $\FY_s\cap\J=\emptyset$, that is  $\FY_s,\J$ are disjoint sets.\\
  iv) $\J=\FZ\setminus \FY_{s}$.\\
  v) $(t,r)\in\FZ$ iff $t\in\integers_{s+1-p}$ and $r\in \lbracket s, t-p+1\rbracket$. 
  \end{lemma}
  \begin{proof}
  i) Let  $(t,r)\in\J_i$. Then $r-t=i\not=k$,  whence $(t,r)\not\in\J_k$ and the result follows. \\
  ii)  By Definition we have: $\FY_s\subset \integers_{s}\times \integers_{s}\subset\integers_{s+1-p}\times \integers_{s}$. Also $(t,t+i)\in\J$ iff   $t\in\integers_{s-i}$ and $i\in\lbracket 1, p-1\rbracket$. As $\integers_{s-i}\subset\integers_{s+1-p}$, it follows that $t\in\integers_{s+1-p}$. Moreover, as  $(t+i)\in\integers_{s}$, it follows that $(t,t+i)\in\integers_{s+1-p}\times \integers_{s}$, whence $\J\subset\integers_{s+1-p}\times \integers_{s}$. As both $\FY_s$ and $\J$ are subsets of $\integers_{s+1-p}\times \integers_{s}$, we conclude that  $\FZ=\FY_s\cup\J\subset\integers_{s+1-p}\times \integers_{s}$.\\
  iii) Let  $(t,r)\in\FY_s$. Then as $p\ge 2$ we have: $s\le r\le t< t+i$ for any $i\in\lbracket 1, p-1\rbracket$. Accordingly $r\not= t+i$ for all $i\in\lbracket 1, p-1\rbracket$. Thus $(t,r)\not\in \J_i$ for all $i\in\lbracket 1, p-1\rbracket$ and so  $(t,r)\not\in\bigcup_{i=1}^{p-1}\J_i$. The latter implies that $\FY_s\cap(\bigcup_{i=1}^{p-1}\J_i)=\emptyset$ and the assertion follows from the Definition  $\J=\bigcup_{i=1}^{p-1}\J_i$.\\
  iv) As  $\FZ=\FY_s\cup\J$ and $\FY_s\cap \J=\emptyset$,  the assertion follows. \\
  v) In what follows we shall use the statements (a) to (c) below:
  
 	a) $t\in\integers_{s-i}$ iff ($t\ge s-i$ and $t\in\integers_{s+1-p}$) iff ($t\in\integers_{s+1-p}$ and $s \le t+i$)  (notice that $t\in\integers_{s+1-p}$ is redundant).
 	
  	b) ($r=t+i$ and $1\le i\le p-1$) iff $t+1\le r\le t+1-p$. 
  	
    c) [($s\le r$ and $t+1\le r\le t+1-p$) or $s\le r\le t$] iff  $s\le r\le t+1-p$. 
    
\noindent The Definition of $\J$ followed by statements (a) and (b) imply: $(t,r)\in\J$ iff ($t\in\integers_{s-i}$ and $1\le i\le p-1$ and $r=t+i$) iff  ($t\in\integers_{s+1-p}$ and $s\le t+i=r$ and $t+1\le r\le t+1-p$). Taking into account that  $(t,r)\in\FY_s$ iff $t\in\integers_{s}$ and $s\le r\le t$, it follows from (c) that: $(t,r)\in\J\cup\FY_s$ iff
  ($t\in\integers_{s+1-p}$ and $s\le r$ and $t+1\le r\le t+1-p$) or ($t\in\integers_{s}$ and $s\le r\le t$) iff  ($t\in\integers_{s+1-p}$ and $s\le r\le t+1-p$), as asserted.
  \end{proof}
  
	\begin{theorem}\label{theo. extension of the Green's function domain} 
	Let $H(t,r)|_{\FZ}$ be the restriction of the Green's function to $\FZ$.  Then  $H(t,r)|_{\FZ}=\xi_{t,r}$.
	\end{theorem}
	\begin{proof}
		The Definition in eq. (\ref{Exdef:xi^m}), applied for $m=1$, implies that $\xi_{t,r}=0$, whenever $r>t$ (or $\xi_{t,t+j}=0$, whenever  $j\ge 1$) for all  $t\in\integers_{s+1-p}$. 
	In view of Lemma \ref{lem. restriction of the Green function}, it suffices to show that $H(t,r)=0$ on the set $\FZ\setminus\FY_s$, and therefore on account Lemma \ref{lem: restriction properties} (iv), it suffices to show  that $H(t,r)=0$ on the set $\J$. Now, the result follows from the fact that for any $i\in \lbracket 1,p-1 \rbracket$ and any $t\in\integers_{s-i}$, the numerator of $H(t,t+i)$ in eq. (\ref{eq: Explicit Green's Function}) is zero, that is 
		\[\left|\begin{array}{cccc}
		\xi^{(1)}_{t,s} & \xi^{(2)}_{t,s}&...& \xi^{(p)}_{t,s}\\
		\xi^{(1)}_{t+i-1,s}& \xi^{(2)}_{t+i-1,s}&...& \xi^{(p)}_{t+i-1,s}
		\vspace{-0.04in}\\
		\vdots & \vdots & \vdots\vdots\vdots & \vdots\vspace{-0.03in}\\
		\xi^{(1)}_{t+i-p+1,s} & \xi^{(2)}_{t+i-p+1,s} &...& \xi^{(p)}_{t+i-p+1,s}
		\end{array}\right|=0, \]
		since its first row  coincides with one of its remaining rows for any $i=1,2,...,p-1$, while its  denominator is nonzero, i.e.,   $\left|\BXI_{t+i,s}\right|\not=0$, since $t+i\ge s$, whence  $\BXI_{t+i,s}$ is invertible (see Corollary \ref{cor: invertible}).
	\end{proof}

	\begin{corollary}\label{cor. Green's function  homogeneous solution representation}
		The general solution of eq. {\rm (\ref{HLDE001})} (or the general homogeneous solution of eq. {\rm (\ref{VC-LDE(p)})}) can be explicitly expressed in terms of the Green's function, the varying coefficients and the sequence of prescribed  values $\{y_{r+1-m}\}_{1\le m\le p}$ as:\vspace{-0.16in}
		\begin{equation} \label{Green's representation of the homog. solution} y_{t}=\displaystyle\sum_{m=1}^{p}\sum_{i=1}^{p+1-m}\phi_{m-1+i}(r+i)H(t,r+i)y_{r+1-m}\  \  \  \text{for all} \  t\ge r+1.\vspace{-0.02in}
		\end{equation}
	\end{corollary}
	\begin{proof}
		The range of values of $i$ in  the second sum of eq. (\ref{eq. general homogeneous solution}) is:  $1\le i\le p+1-m$. Taking into account that $s+1\le r+1\le t$ and $1\le i \le p+1-m\le p$ (since $m\ge 1$), the following chain of inequalities holds:
		\begin{equation}\label{eq. chain of inequalities}
		s\le s+1 \le r+1\le r+i \le r+p+1-m\le r+p=(r+1)-1+p\le t-1+p. 
		\end{equation}
		Thus $s\le r+i\le t-1+p$ for any $i\in \lbracket 1,p-1 \rbracket$ and therefore for all $i$ such that $1\le i \le p+1-m$. We infer that $t\in\integers_{s+1-p}$ and $s\le r+i\le t-1+p$. Thus, Lemma \ref{lem: restriction properties}(v) implies that $(t,r+i)\in\FZ$ and Theorem \ref{theo. extension of the Green's function domain}  allows us to replace $\xi_{t,r+i}$ in eq. (\ref{eq. general homogeneous solution}) with $H(t,r+i)$, which, in turn, implies that eq. (\ref{Green's representation of the homog. solution}) holds true, as claimed. 
	\end{proof}
	In the proof of Theorem \ref{theo. extension of the Green's function domain}, we have established a  property of the Green's function, that is $H(t,r)=0$ for all $(t,r)\in(\FZ\setminus\FY_s)$, showing there its equivalence to a well known result, that is $H(t,t+i)=0$ for any $1\le i\le p-1$ and any $t\in\integers_{s-i}$ (see \cite{{MiLD68}} eqs. (2.12),  p. 41, applied for $q=p$, $s=a+q-1$ and $i=k$). Proposition \ref{prop. Green's function property} in the Appendix, recovers an additional property of the Green's function, that is  $H(t,t+p)=\frac{1}{\phi_p(t+p)}$\vspace{0.05in} for all $t\in\integers_{s+1-p}$ (see the above cited reference, additionally applied with $a_q(t)=-\phi_p(t)$). As $\xi_{t,t+p}=0$ (see eq. (\ref{Exdef:xi^m}), applied for $m=1$), we conclude that $H(t,t+p)\not=\xi_{t,t+p}$. 
	%This shows that 	the set $\FZ$ is maximal for $H(t,r)|_{\FZ}=\xi_{t,r}$.
	
	The computational time complexity of the Green's function involved in eq.  (\ref{Green's representation of the homog. solution}) is linear. This is due to the identification $H(t,r)|_{\FZ}=\xi_{t,r}$ (see  Theorem \ref{theo. extension of the Green's function domain}), combined with the fact that the Gaussian elimination
	process computing banded determinants uses approximately  $\displaystyle\frac{k(p+1)^{2}}{4}$ multiplications, where $k$ is the order of the matrix and $(p+1)$ is the bandwidth of the matrix (see \cite{Thorson2000}). Accordingly,  the time complexity of the Green's function, involved in the solution of VC-LDEs($p$) is $O(k)$. This is computationally tractable and comparable with the time complexity of algorithms computing the same restriction of the Green's function by recursion.
	\section{Explicit Green's Function Solution Representation}
	In this Section, the banded Hessenbergian representation of the Green's function restriction on $\FZ$ (see Theorem \ref{theo. extension of the Green's function domain})  along with an analogous particular solution
	representation (see Proposition \ref{particular solution determinant representation} below), are employed to obtain an explicit expression to the general solution of nonhomogeneous VC-LDEs($p$) in eq. (\ref{VC-LDE(p)}). This is solely expressed in terms of the Green's function, the variable coefficients and the forcing terms (see eq. (\ref{nonhomogeneous solution})). Furthermore, we show the full equivalence between the aforementioned Green's function solution representation and the single determinant representation of the solution, established by Kittappa in \cite{KitRep93}.
	
	\subsection{Particular Solution}
	In the following Proposition, we provide the Hessenbergian representation of the solution associated with zero initial values. 
	\begin{proposition}\label{particular solution determinant representation}
	The particular solution of eq. {\rm (\ref{VC-LDE(p)})}, taking on the initial values $y_r=y_{r-1}=...=y_{r-p+1}=0$, can be expressed as a Hessenbergian function of $t\ge r+1$ for a fixed $r\ge s$:
		\begin{equation}\label{particular solution determinant representation2}
		\begin{array}{lll}
		y^{par}_{t}=\left|
		\begin{array}{cccccc}
		v_{r+1} & -1 &  &  &  &    \\ 
		v_{r+2} & \phi _{1}(r +2) 
		&  &  &  &    \\ 
		\vdots & \vdots  &\ \  \ddots &  &  &   \\ 
		v_{r+p-m+1} & \phi_{p-m}(r +p-m+1) 
		& \ddots &  &  &    \\ 	
		\vdots& \vdots  & \ddots &\ddots  &  &      \\
		v_{r +p+1}& \phi_{p}(r +p+1) 
		& \ddots & \ddots & &    \\ 
		\vdots &   & \ddots & \ddots &  \ddots &  \\ 
		v_{t-1}& & & \phi_{p-1}(t-1)& \cdots  &  -1 \\ 
		v_t&  &  & \phi_{p}(t) & \cdots
		& \phi _{1}(t)\end{array} \right|.\end{array}
		\end{equation}
	An equivalent formula to $y^{par}_{t}$ in eq. {\rm(\ref{particular solution determinant representation2})} expressed in terms of the principal determinant function and the forcing terms is given by:
		\begin{equation}\label{particular solution principal determinant representation}
	y^{par}_{t}=\sum_{i=1}^{t-r}v_{r+i}\xi_{t,r+i}.
	\end{equation}
	\end{proposition}
	\begin{proof}
		As the cofactor of an entry $\phi_{m}(t)$ in  the last row of eq. (\ref{particular solution determinant representation2}) is $y^{par}_{t-m}$, expanding the determinant in eq. (\ref{particular solution determinant representation2}) along the last row we obtain  $y^{par}_{t}=\sum_{m=1}^{p}\phi_m(t)y^{par}_{t-m}+v_t$, 
		which shows that $y^{par}_{t}$ solves  eq. {\rm (\ref{VC-LDE(p)})}.
		Let us now apply eq. (\ref{particular solution determinant representation2}) for $t=r+1,...,r+p$. We have:\vspace{-0.1in} $$y^{par}_{r+1}=v_{r+1},\ \  y^{par}_{r+2}=
		\phi_1(r+2)y^{par}_{r+1}+v_{r+2}, \ ..., \  y^{par}_{r+p}=\displaystyle\sum_{m=1}^{p-1}\phi_m(r+p)y^{par}_{r+p-m}+v_{r+p}.\vspace{-0.1in}$$
		As a consequence, for any $i\ge 1$,  we can write  $$y^{par}_{r+i}=\sum_{m=1}^{i-1}\phi_m(r+i)y^{par}_{r+i-m}+v_{r+i}+\sum_{m=i}^{p}\phi_m(r+i)y_{r+i-m},$$
		whenever $y_{r+i-m}=0$ for any $m$ such that $i\le m\le p$. In particular, if $i=1$, then $y_{r+1-m}=0$ for all $m$ such that $1\le m\le p$ and $y^{par}_{r+1}=v_{r+1}$. Thus setting $y_{r+1-m}=0$ for any $m\in\lbracket 1, p \rbracket$ in eq. (\ref{VC-LDE(p)}), the latter equation is satisfied by $y^{par}_{t}$ associated with zero initial values, that is $\{y_{r+1-p}=0,...,y_r=0\}$. Accordingly,  the result follows from the uniqueness of the initial value problem. Finally, expanding the determinant in eq. (\ref{particular solution determinant representation2}) along the first column, the expression in eq. (\ref{particular solution principal determinant representation}) follows immediately.
	\end{proof}
	\subsection{General Nonhomogeneous Solution}	
	Let us call $y^{hom}_{t}$ the general homogeneous solution given by eq. (\ref{Green's representation of the homog. solution}).
	Adding eqs. (\ref{Green's representation of the homog. solution}) and (\ref{particular solution principal determinant representation}), we obtain the general nonhomogeneous solution $y_t=y^{hom}_{t}+y^{par}_{t}$ for $t\ge r+1$  of eq. (\ref{VC-LDE(p)}), explicitly in terms of the principal determinant function $\xi_{t,r}$, the varying coefficients $\phi_{m}(t)$, the forcing terms $v_{t}$, and the prescribed values $y_{r+1-m}$ for $1\le m\le p$, as formulated below:
	\begin{equation} \label{nonhomogeneous solution2} 
	y_t=\displaystyle\sum_{m=1}^{p}\sum_{i=1}^{p+1-m}\phi_{m-1+i}(r+i)\xi_{t,r+i}y_{r+1-m}+\displaystyle\sum_{i=1}^{t-r} \xi_{t,r+i}v_{r+i}.
	\end{equation}
	Taking into account that $1\le i\le t-r$ (in the second summation term of eq. (\ref{nonhomogeneous solution2})),  it follows that $r+1\le r+i\le t$, thus Lemma \ref{lem. restriction of the Green function} allows us to use the identification: $H(t,r+i)=\xi_{t,r+i}$. Also the first term of the right-hand side of eq. (\ref{nonhomogeneous solution2}) is the homogeneous solution part, given by eq.
	 (\ref{Green's representation of the homog. solution}), thus we can rewrite eq. (\ref{nonhomogeneous solution2}) as:
	\begin{equation} \label{nonhomogeneous solution} 
	y_t=\displaystyle\sum_{m=1}^{p}\sum_{i=1}^{p-m+1}\phi_{m-1+i}(r+i)H(t,r+i)y_{r+1-m}+\displaystyle\sum_{i=1}^{t-r} H(t,r+i)v_{r+i}\  \  \  \text{for all} \  t\ge r+1.
	\end{equation}
	%To the extend of our Knowledge there are no fully explicit expressions to the homogeneous and non-homogeneous solutions of VC-LDEs($p$) (see  eqs. (\ref{Green's representation of the homog. solution}) and (\ref{nonhomogeneous solution}) respectively), exclusively in terms of the Green's function, the varying coefficients, the initial conditions and the forcing terms. 
	%An algorithm, associated with a computer program  is provided in Appendix \ref{Appendix: Algorithms}, Algorithm \ref{algo. general solution}, aiming at the verification of eq. (\ref{nonhomogeneous solution}). The program is formulated by the Mathematica symbolic language. It expands the formula in eq.  (\ref{nonhomogeneous solution2}) and  yields an  equivalent result to that obtained directly by recursion.
	
	\subsection{Equivalent Solution Representations}
	The equivalence between the Green's function  explicit representations of the general solution to  VC-LDEs($p$) and that obtained by Kittapa in \cite{KitRep93} is demonstrated in the following Proposition.
	\begin{proposition}\label{equivalence}
		The Green's function solution representation in eq. \emph{(\ref{nonhomogeneous solution})} is equivalent to the single determinant solution representation of eq. \emph{(\ref{VC-LDE(p)})} established in \emph{\cite{KitRep93}}.
	\end{proposition}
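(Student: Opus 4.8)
The plan is to pass between (\ref{nonhomogeneous solution}) and Kittappa's single determinant by two applications of multilinearity of the determinant in one column. First I would record Kittappa's representation \cite{KitRep93} in the notation used here: the solution of (\ref{TVAR(P)}) on $\integers_{s-p+1}$ is $\y_t=\det(K_{t,s})$, where $K_{t,s}$ is the $(t-s)\times(t-s)$ matrix that agrees with the principal matrix $\Phi_{t,s}$ of Definition \ref{Principal determinant} in all columns but the first, the $i$-th entry of its first column being the part of the right-hand side of the $i$-th recursion step $y_{s+i}=\sum_{m=1}^{p}\phi_m(s+i)y_{s+i-m}+v_{s+i}$ that is already fixed by the prescribed values $\y_{s-p+1},\dots,\y_{s}$, namely $w_i:=v_{s+i}+\sum_{m=i}^{p}\phi_m(s+i)\y_{s+i-m}$ (empty sum, so $w_i=v_{s+i}$, when $i>p$). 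The structural fact that makes this shape the right normalization is that by (\ref{assignment}) only the first column of $\Phi^{(m)}_{t,s}$ depends on $m$, so $\Phi_{t,s}$, every $\Phi^{(m)}_{t,s}$, and $K_{t,s}$ share their last $t-s-1$ columns. If \cite{KitRep93} presents the determinant in a bordered or transposed form, a preliminary reduction by elementary row and column operations is needed to bring it to the form above; carrying out this reconciliation of conventions is the step I expect to be the only genuine obstacle.

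With $K_{t,s}$ in this form, split its first column as the sum of the column $(v_{s+1},v_{s+2},\dots,v_{t})'$ and the column whose $i$-th entry is $\sum_{m=i}^{p}\phi_m(s+i)\y_{s+i-m}=\sum_{m=1}^{p}\phi_{m+i-1}(s+i)\y_{s-m+1}$ (the second form follows from the substitution $n=m+i-1$ and the convention $\phi_n\equiv0$ for $n>p$). Linearity of $\det$ in the first column gives $\det(K_{t,s})=\det(V_{t,s})+\det(\Psi_{t,s})$, where $V_{t,s}$ is exactly the Hessenberg matrix displayed in (\ref{particular solution determinant representation2}) and $\Psi_{t,s}$ has first column $\big(\sum_{m=1}^{p}\phi_{m+i-1}(s+i)\y_{s-m+1}\big)_{i=1}^{t-s}$ and the same remaining columns as $\Phi_{t,s}$. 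By Proposition \ref{particular solution determinant representation}, $\det(V_{t,s})=\p_t=\sum_{j=1}^{t-s}H(t,s+j)v_{s+j}$, which is the second sum in (\ref{nonhomogeneous solution}).

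It remains to evaluate $\det(\Psi_{t,s})$. Expanding its first column by linearity once more, and noting that $(\phi_{m+i-1}(s+i))_{i=1}^{t-s}$ is by (\ref{complete Hessenbergian}) (equivalently (\ref{Def1: Phi^m_k})) precisely the first column of $\Phi^{(m)}_{t,s}$, we obtain $\det(\Psi_{t,s})=\sum_{m=1}^{p}\y_{s-m+1}\det(\Phi^{(m)}_{t,s})=\sum_{m=1}^{p}\xi^{(m)}_{t,s}\,\y_{s-m+1}$. Passing each $\xi^{(m)}_{t,s}$ through formula (\ref{xi^m_k}) of Proposition \ref{prop. global feature} and then replacing $\xi_{t,s+j}$ by $H(t,s+j)$ via Theorem \ref{theo: explicit Green's function} turns this into $\sum_{m=1}^{p}\sum_{j=1}^{p-m+1}\phi_{m+j-1}(s+j)H(t,s+j)\y_{s-m+1}$, the right-hand side of (\ref{Green's representation of the homog. solution}) and hence the first sum in (\ref{nonhomogeneous solution}). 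Adding the two determinant pieces reconstructs (\ref{nonhomogeneous solution}), which is the asserted equivalence. As an independent sanity check, both representations solve (\ref{TVAR(P)}) subject to the same prescribed values $\y_{s-m+1}$ ($1\le m\le p$) and so must coincide by the uniqueness clause of the Fundamental Theorem of VC-LDEs; the computation above is what makes this identity explicit at the level of a single determinant.
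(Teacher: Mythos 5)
Your proof is correct and follows essentially the same route as the paper's: both rest on multilinearity of the determinant in the first column to identify Kittappa's single determinant with the sum $\sum_{m=1}^{p}\y_{s-m+1}\det(\Phi^{(m)}_{t,s})+\det(V_{t,s})$, using Proposition \ref{particular solution determinant representation} for the forcing part and the passage through (\ref{homogeneous solution0}), (\ref{xi^m_k}) and Theorem \ref{theo: explicit Green's function} for the homogeneous part. The only difference is that you decompose Kittappa's determinant while the paper assembles it from (\ref{nonhomogeneous solution}); the two directions are the same argument.
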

	\begin{proof}
		Replacing the homogeneous solution part of eq. (\ref{nonhomogeneous solution}) (or eq. (\ref{nonhomogeneous solution2})) with its equivalent expression in eq. (\ref{homogeneous solution0}), we can rewrite eq. (\ref{nonhomogeneous solution}) as: 
		\begin{equation}\label{nonhomogeneous solution3}
		y_t=\displaystyle\sum_{m=1}^{p}\xi^{(m)}_{t,r}y_{r+1-m}+\displaystyle\sum_{i=1}^{t-r} \xi_{t,r+i}v_{r+i} \  \  \text{for} \  t\ge r+1.
		\end{equation}
		Applying the Hessenbergian expression of $\xi^{(m)}_{t,r}$ in eq. (\ref{Exdef:xi^m}) and the Hessenbergian expression of the particular solution in eq. (\ref{particular solution determinant representation2}) to eq. (\ref{nonhomogeneous solution3}), the latter solution representation of eq. (\ref{VC-LDE(p)}),  can be expressed in more detail as
		
		\begin{equation*}\label{general solution}
		\begin{array}{lll}
		y_t&=&\displaystyle\sum_{m=1}^{p}y_{r+1-m}\left|\!\! 
		\begin{array}{cccccc}
		\phi_{m}(r+1) & -1 &  &  &  &     \\ 
		\phi_{m+1}(r+2) & \phi _{1}(r +2) 
		&  &  &  &    \\ 
		\vdots & \vdots &  &  &  &    \\ 
		\phi_{p}(r+p-m+1) & \phi _{p-m}(r +p-m+1) 
		& \ddots &  &  &    \\ 	
		& \vdots  &  &  &  &      \vspace{-0.05in}\\
		& \phi _{p}(r +p+1) & \ddots
		&  \ddots & &    \\ 
		&  & \ddots & \ddots &   \ddots & \\ 
		&  &   & \phi _{p-1}(t-1)& \cdots  &  -1 \\ 
		&  &  & \phi _{p}(t) & \cdots
		& \phi _{1}(t)
		\end{array}\!\!\right|\vspace{0.2in}\\
		&&\hspace{0.62in} +\left|
		\begin{array}{cccccc}
		v_{r+1} & -1 &  &  &  &    \\ 
		v_{r+2} & \phi _{1}(r +2) 
		&  &  &  &    \\ 
		\vdots & \vdots  &\ \  \ddots &  &  &   \\ 
		v_{r+p-m+1} & \phi _{p-m}(r +p-m+1) 
		& \ddots &  &  &    \\ 	
		\vdots& \vdots  & \ddots &\ddots  &  &      \\
		v_{r +p+1}& \phi _{p}(r +p+1) 
		& \ddots & \ddots & &    \\ 
		\vdots &   & \ddots & \ddots &  \ddots &  \\ 
		v_{t-1}& & & \phi _{p-1}(t-1)& \cdots  &  -1 \\ 
		v_t&  &  & \phi _{p}(t) & \cdots
		& \phi _{1}(t)\end{array} \right|.\end{array} \end{equation*}
		As a result of the multi-linearity of determinants in columns, the right-hand side of the above expression of $y_t$ takes a single determinant form as:
		\begin{equation}\label{eq. single determinant sol.}
		y_{t}\!=\!\left|\!\! 
		\begin{array}{cccccc}
		\displaystyle\sum_{m=1}^{p}y_{r+1-m}\phi_{m}(r+1)+v_{r+1} & -1 &  &  &  &    \\ 
		\displaystyle\sum_{m=1}^{p}y_{r+1-m}\phi_{m+1}(r+2)+v_{r+2} & \phi _{1}(r+2) &   &  &  &    \vspace{-0.05in}\\ 
		\vdots & \vdots &  &  &  &    \vspace{-0.05in} \\ 
		\displaystyle\sum_{m=1}^{p}y_{r+1-m}\phi_{p}(r\!+\!1\!+p\!-\!m\!)+v_{r+1+p-m} & \phi _{p-m}(r\!+\!1+\!p\!-\!m\!) & \ddots &   &  &    \vspace{-0.05in} \\  	
		\vdots& \vdots & \ddots & \ddots &   &      \vspace{-0.05in} \\ 
		v_{r +p+1}& \phi _{p}(r +p+1)
		& \ddots & \ddots &\ddots &   \\ 
		\vdots&  & \ddots & \ddots &   \ddots &  \\ 
		v_{t-1}&  &  &  \phi _{p-1}(t-1)& \cdots  &  -1 \\ 
		v_{t}&    &  & \phi _{p}(t) & \cdots
		& \phi _{1}(t)
		\end{array}\!\!\right|.
		\end{equation}
		The latter expression coincides with the single determinant solution representation in \cite{KitRep93}.
	\end{proof}
	The equivalence result in Proposition \ref{equivalence} makes it possible to introduce an alternative scheme to the generation of the fundamental set of solutions $\varXi_{r}$, by using as starting point the single determinant solution representation in place of the IGE, as follows:
	Applying eq. (\ref{eq. single determinant sol.}) with $v_{r+i}=0$ for all $i\ge 1$ and assigning for each fixed $m\in\lbracket 1, p \rbracket$, the initial conditions $y_{r+1-m}=1$ and $y_{r+1-j}=0$, whenever $m\not= j$, we recover the sequences $\xi^{(m)}_{.,r}$ constructed by the IGE in Subsection \ref{subs. Infinite Gaussian Elimination} and established in Subsection \ref{Fundamental  and General Homogeneous Solutions}.   
	
	\section{Compact Representations}\label{Compact Representations}
In the present Section we apply the Leibnizian and nested sum representations of Hessenbergians  to derive compact representations for the domain restriction of the Green's function $H(t,r)$ to $\FZ$ (see Subsections \ref{Leibnizian  Representation of Green Function} and  \ref{Nested Sum Representation of Green Function} respectively). Moreover, compact representations for the elements of the companion matrix product and the elements of the Green's function determinant ratio formula are provided in Subsections \ref{Compact Companion Matrix Product} and \ref{compact Green's Function Determinant-Ratio Formula}, respectively. A  Leibnizian  compact representation for the general solution of nonhomogeneous VC-LDEs($p$) is obtained in Subsection \ref{Compact Solution  Representation} . 

In what follows, we consider a $k\times k$ full lower Hessenberg matrix $[h_{i,j}]_{1\le i,j\le k}$ with superdiagonal elements $h_{i,i+1}=-1$.
Applying eq. (\ref{assignment}) for $m=1$ we get:
\begin{equation}\label{assignment0}
h_{i,j}=\left\{\begin{array}{cl}
\phi_{i-j+1}(r+i) & {\rm if}\ 1\le i-j+1\le p\\
-1 & {\rm if} \ i-j+1=0        \\
\hspace{0.115in}  0 & {\rm elsewhere}.
\end{array}\right. 
\end{equation}
Under the assignment (\ref{assignment0}), the matrix ${\Hes}_{t-r}$ in eq. (\ref{Hessenberg Matrix}), applied for $k=t-r$, turns to a banded Hessenberg matrix, being identical to $\BGF_{t,r}$ in eq. (\ref{Def1: Phi^m_k}). A few distinct elements of ${\Hes}_{t-r}$  are provided below:\\
$h_{1,1}=\phi_{1}(r+1)$, $h_{1,2}=-1$, $h_{2,1}=\phi_{2}(r+2)$,
$h_{t-r,t-r}=\phi_{1}(t)$ and $h_{t-r,1}=\left\{\begin{array}{cl}
\phi_{t-r}(t) & {\rm if}\ 1\le t-r\le p\\
0  & {\rm if}\ t-r>p
\end{array}\right. $.
\subsection{Leibnizian Representation of the  Green's Function Restriction}\label{Leibnizian  Representation of Green Function}
Applying  the assignment in eq. (\ref{assignment0}) to eq. (\ref{compact-form}), 
we establish in \ref{compact-form2}  the Leibnizian compact representation of the the principal determinant function $\xi_{t,r}$.  By virtue of Theorem \ref{theo. extension of the Green's function domain}, $\xi_{t,r}$ is a banded Hessenbergian representation the  Green's function restriction $H(t,r)|_{\FZ}$, whence: 
\begin{equation}\label{compact-form2}
H(t,r)|_{\FZ}=\xi_{t,r}=\left\{ \begin{array}{cl}
\displaystyle \sum_{m=0}^{2^{t-r-1}-1}\ \  \prod_{i=1}^{t-r}\phi_{i-\sigma_{t-r,i}(m)+1}(r+i), &\text{if} \ \ \  s\le r< t  \\
1, &\text{if} \ \ \   t=r     \\
0,  &\text{elsewhere} 
\end{array}\right.
\end{equation}
\subsection{Nested Sum Representation of the of the  Green's Function Restriction}\label{Nested Sum Representation of Green Function}
The nested sum  representation of Hessenbergians,  established in \cite{MaTo16}  (see eq. (19) in their Corollary 4.1.), can be expressed according to our notation in eq. (\ref{Hessenberg Matrix}) (using the adjustment: $k=n$, $h_{i,i+1}=-1$ and $h_{i,j}=b_{i,j}$, elsewhere), as:\vspace{-0.05in}
\begin{equation}\label{Marrero}
\det(\Hes_k) = h_{k,1}+\sum_{j=2}^{k}\sum_{k_1=j}^{k}\sum_{k_2=j-1}^{k_1-1}...
\sum_{k_{j-1}=2}^{k_{j-2}-1}h_{k,k_1}\prod_{m=2}^{j-1}h_{k_{m-1}-1,k_m}h_{k_{j-1}-1, 1}. 
\end{equation}
Applying the assignment in eq. (\ref{assignment0}) to eq. (\ref{Marrero}), the nested sum representation of $H(t,r)|_{\FZ}$  (or $\xi_{t,r}$) takes the form:
\begin{equation}\label{Nested compact form}	
\mbox{\hspace{-5.0in}} H(t,r)|_{\FZ}=\xi_{t,r}\vspace{-0.05in}
\end{equation}
\begin{align*}
=\left\{ \begin{array}{ccl}
\!\! \phi_{t-r} (t) + \displaystyle\sum_{j=2}^{t-r}\sum_{k_1=j}^{t-r}\sum_{k_2=j-1}^{k_1-1}...&\!\!\!\!
\displaystyle\sum_{k_{j-1}=2}^{k_{j-2}-1}\phi_{t-r-k_1+1}(t) &\!\!\!\!\displaystyle\prod_{m=2}^{j-1}\phi_{k_{m-1}-k_m}(r+k_{m-1}\!-\!1) 
\phi_{k_{j-1}-1}(r+k_{j-1}\!-\!1), \\ 
& &\text{if} \ \ \   s\le r< t  \\
& 1,  &   \text{if} \ \ \  t=r     \\
& 0, & \text{elsewhere} 
\end{array}\right. 
\end{align*}
Proceeding with the above mentioned assignment, the Green's function restriction  $H(t,r)|_{\FZ}$ can also be represented by Mallik's combinatorial formula in \cite{MaEx98}, as adjusted for Hessenbergians in \cite{MaTo16} (see  eq. (9) therein).
\subsection{Companion Matrix Product}\label{Compact Companion Matrix Product}
By virtue of Theorem \ref{theo. extension of the Green's function domain}, it follows from the chain of inequalities in eq. (\ref{eq. chain of inequalities}) that we can replace $\xi_{t,r+j}$ with the  Green's function restriction $H(t,r+j)|_\FZ$ in eq. (\ref{xi^m_k}) for $1\le m\le p$  to obtain the expressions
\begin{equation}\label{xi^m_k2}
\xi^{(m)}_{t,r}=\left\{\begin{array}{lll}
\displaystyle\sum_{j=1}^{p+1-m}\phi_{m-1+j}(r+j)H(t,r+j),&  2\le m\le p,\\
&& t\ge r+1, \\
H(t,r),&  m=1,  \end{array}\right. 
\end{equation} 
noticing that if $t\le r$, then the corresponding values of $\xi^{(m)}_{t,r}$ are given by eq. (\ref{xi^m initial values}). Therefore the elements of the companion matrix product can be expressed directly in terms of the Green's function. 
Applying eq. (\ref{compact-form2}) to  (\ref{xi^m_k2}), we conclude that $\xi^{(m)}_{t,r}$ are equipped with the following Leibnizian representations:	
\begin{equation}\label{compact CMP}
\xi^{(m)}_{t,r}=\left\{\begin{array}{lll} \displaystyle\sum_{j=1}^{p-m+1}\phi_{m-1+j}(r+j)	\displaystyle\sum_{q=0}^{2^{t-r-1-j}-1} \ \prod_{i=1}^{t-r-j}\phi_{i-\sigma_{t-r-j,i}(q)+1}(r+j+i), &  2\le m\le p,\\
& & t\ge r+1.\\
\displaystyle \sum_{m=0}^{2^{t-r-1}-1}\ \  \prod_{i=1}^{t-r}\phi_{i-\sigma_{t-r,i}(m)+1}(r+i), & m=1,  \end{array}\right. 
\end{equation}
As  $\F_{t,r}=\BXI_{t,r}$ whenever $t\ge r\ge s$ (see Theorem \ref{theo: explicit PCM}), the  expressions in eq. (\ref{compact CMP}) yield compact representations for the elements of the companion matrix product in eq. (\ref{product of companion matrices1}),  respectively. The latter result is to be compared with Theorem 2.1. in (\cite{LimDay11}).	
\subsection{Green's Function Determinant-Ratio Formula}\label{compact Green's Function Determinant-Ratio Formula}

Applying  the expression in eq. (\ref{compact CMP}) for $r=s$ to the right-hand side determinant elements of eq. (\ref{eq: Explicit Green's Function}), we establish compact representations for the elements of the Green's function determinant ratio formula in eq. (\ref{eq: Explicit Green's Function}), formulated  by Leibnizian representations.

Similarly, applying the expression in eq. (\ref{Nested compact form}) for $r=s$  to eq. (\ref{xi^m_k2}), we obtain nested sum representations for the elements of Green's function determinant ratio formula in eq. (\ref{eq: Explicit Green's Function}).

\subsection{Leibnizian Solution Representation}\label{Compact Solution  Representation}
Given any sequence of prescribed values $\{y_{r+1-p},...,y_r\}$ for $r\ge s$ fixed,  the expressions in eq. (\ref{compact-form2}) applied to eq. (\ref{nonhomogeneous solution3}) yield the compact Leibnizian representation of the solution to eq. (\ref{VC-LDE(p)}) for $t\ge r+1$,  that is:
\begin{equation} \label{Leibniz representation to nonhomogeneous solution} 
\begin{array}{lll}
y_t&=&\displaystyle\sum_{m=1}^{p}y_{r+1-m}\sum_{j=1}^{p+1-m}\phi_{m-1+j}(r+j)\sum_{q=0}^{2^{t-r-j-1}-1} \prod_{i=1}^{t-r-j}\phi_{i-\sigma_{t-r-j,i}(q)+1}(r+j+i)\vspace{0.05in}\\
&&+\ \  \displaystyle\sum_{j=1}^{t-r} v_{r+j}\  \sum_{q=0}^{2^{t-r-j-1}-1} \   \prod_{i=1}^{t-r-j}\phi_{i-\sigma_{t-r-j,i}(q)+1}(r+j+i).\end{array}
\end{equation}
Eq. (\ref{Leibniz representation to nonhomogeneous solution}) is to be compared with the nested sum representation of the general solution established in  \cite{MaTo16} (see eq. (17) therein). 
In Appendix  \ref{Appendix: Algorithms}, both Algorithms \ref{algo. Compact} and \ref{algo. general solution} are applied  along with the assignment in eq. (\ref{assignment0}) to verify the compact solution representation in eq. (\ref{Leibniz representation to nonhomogeneous solution}),  evaluated  for any given sequence of prescribed values. 
	
\section{Future Work}
	\noindent The results of this work can be extended in multiple directions. We highlight two of them:
	
	Our results can be extended to cover an explicit representation to the solution of infinite order linear difference equations with constant or variable coefficients (ILDEs). This can be established by extending our results to cover linear difference equations of unbounded order, but of finite kernel index $p$ (ULDE($p$)) (see \cite{Paraskevopoulos2014}).~\footnote{The term LDEs of ascending order of index $N$ is equivalently used there for the ULDEs($p$).} An ULDE($p$) is  naturally derived as a $p$ order truncation of the ILDE,  yielding an approximation of $p$  order to the original ILDE. The  fundamental solution set obtained here can be similarly formulated  
	considering full lower Hessenberg matrices in place of banded ones, as in eq. (\ref{complete Hessenbergian}) of Lemma \ref{Lemma cofactor}. As a consequence, the general solution of ULDEs($p$) is given by eq. (\ref{nonhomogeneous solution2}), using full Hessenbergians in place of $\xi_{t,r+i}$. In a similar manner the Leibnizian and nested sum  representations of the solution can be directly derived. The corresponding solution of the ILDE turns out to be the limit of the associated ULDEs($p$), as $p\to \infty$.
	
	Our methodology can be generalized to the case of multivariate VC-LDEs($p$), where square matrices with elements variable coefficients are used in place of scalar valued variable coefficients, as in eq. (\ref{VC-LDE(p)}). 
	Working on the algebra of noncommutative rings,  an explicit form to  the general solution representation of multi-variate difference equations with variable matrix coefficients is obtained. This result has   some remarkable consequences on the fundamental properties of multivariate ARMA models.
	
	\bigskip

	\bibliographystyle{plain}

\bigskip
	
	\appendix
	
	\numberwithin{equation}{section}
	\renewcommand{\theproposition}{\Alph{section}\arabic{proposition}}
	\setcounter{proposition}{0}
	\renewcommand{\thecorollary}{\Alph{section}\arabic{corollary}}
	\setcounter{corollary}{0}
	\renewcommand{\thetheorem}{\Alph{section}\arabic{theorem}}
	\setcounter{theorem}{0} \renewcommand{\thelemma}{\Alph{section}\arabic{lemma}} \setcounter{lemma}{0}
	
\bigskip
	
	\section*{\hspace{2.5in}Appendices}
	\begin{appendices}
		In Appendix \ref{Appendix: ProofsOfPropositions}, we provide proofs for the results reported in the main body of the paper. The infinite Gaussian elimination algorithm is presented in Appendix \ref{Appendix: IGE}. Finally, in Appendix \ref{Appendix: Algorithms}, we provide two algorithms translated into automatically  executable computer programs. The first, constructs and verifies the Leibnizian compact representation of Hessenbergians in eq. (\ref{compact-form}). The second, constructs the Leibnizian representation of the Green's function $H(t,r)$ on $\FZ$, followed by the corresponding representation of the general solution of a VC-LDE($p$).
		\section{[Proofs]}\label{Appendix: ProofsOfPropositions}
		\begin{proposition}\label{positively signed} 
		The following statements hold:\\
		\textit{i}\emph{)}
		The recurrence in eq.  \emph{(\ref{Hessenbergian recurrence})}  can be equivalently expressed by eq. {\rm (\ref{modified recurrence1})}. 
			
		\noindent\textit{ii}\emph{)} The number of non-trivial \emph{SEPs} of Hessenbergians  is $2^{k-1}$, that is $\card(\NSEPs_{k})=2^{k-1}$.
		\end{proposition}
		\begin{proof}
			\textit{i}) 
			Applying the assignments $h_{i,j}=c_{i,j}$, whenever $j\not=i+1$ and $h_{i,i+1}=-c_{i,i+1}$ to  (\ref{Hessenbergian recurrence}) after some algebraic manipulations, demonstrated  below, eq. (\ref{modified recurrence1}) follows:
	\[\begin{array}{ll}
			\det(\Hes_k)\!\!\!&=\displaystyle h_{k,k}\det(\Hes_{k-1})+\sum_{i=1}^{k-1}(-1)^{k-i}h_{k,i}\prod_{j=i}^{k-1}h_{j,j+1}\det(\Hes_{i-1})\\
			\!\!\!&=\displaystyle
			c_{k,k}\det(\Hes_{k-1})\!+\sum_{i=1}^{k-1}(-1)^{k-i}c_{k,i}\prod_{j=i}^{k-1}(-1)c_{j,j+1}\det(\Hes_{i-1})\\
			&=\displaystyle
			c_{k,k}\det(\Hes_{k-1})\!+\sum_{i=1}^{k-1}(-1)^{k-i}c_{k,i}(-1)^{k-i}\prod_{j=i}^{k-1}c_{j,j+1}\det(\Hes_{i-1})\\
			&=\displaystyle
			c_{k,k}\det(\Hes_{k-1})\!+\sum_{i=1}^{k-1}(-1)^{2(k-1)}\prod_{j=i}^{k-1}c_{k,i}c_{j,j+1}\det(\Hes_{i-1})\\	
			&=\displaystyle c_{k,k}\det(\Hes_{k-1})+\sum_{i=1}^{k-1}\prod_{j=i}^{k-1}c_{k,i}c_{j,j+1}\det(\Hes_{i-1}).
			\end{array}\]
			\textit{ii}) Let $\n(r)$ be the number of distinct non-trivial SEPs associated with $\Hes_r$. Taking into account that $\n(0)=\n(1)=1$, the recurrence in eq. (\ref{modified recurrence2}) implies that $\n(r)=n(0)+n(1)+\sum_{i=2}^{r-1}\n(i)$ for all $r\ge 1$, provided that $\sum_{i=j}^{l}a(i)=0$, whenever $j>i$. This can be rewritten as:
			\begin{equation}\label{app. number of SEPs}
			\n(r)=1+\sum_{i=1}^{r-1}\n(i)\ \ \ \text{for all}\ \  r\ge 1.
			\end{equation}
			Working with the weak induction on $k\ge 1$ we shall show that $\n(k)=2^{k-1}$ for all $k\ge 1$. The basis step  $\n(1)=2^{0}=1$ holds true. The induction hypothesis assumes that the statement $\n(k-1)=2^{k-2}$ holds true.
			Starting with eq. (\ref{app. number of SEPs}), applied for $r=k$, we obtain:\vspace{-0.1in}
			\begin{align*}
			\n(k)&=1+\sum_{i=1}^{k-1}\n(i)\\
			(\text{equivalently}) &=(1+\sum_{i=1}^{k-2}\n(i))+\n(k-1)\\
			(\text{apply eq. (\ref{app. number of SEPs}) for}\ r=k-1) &=\n(k-1)+\n(k-1)\\
			(\text{equivalently})&=2\cdot\n(k-1)\\
			(\text{by the induction Hyhpothesis}) &=2\cdot 2^{k-2}\\
			(\text{equivalently})&=2^{k-1}
			\end{align*}
			This satisfies the induction step, and the proof is completed. 
		\end{proof}
		
		\begin{proposition}\label{standard IS}
			The standard IS, say $c_{i,j}$, of any initial string $C[i-1;\per]$ is uniquely determined by the number, say $m$ {\rm ($0\le m\le i-1$)}, of consecutive non-standard predecessors of $c_{i,j}$ and in this case $j=i-m$.
		\end{proposition}
		\begin{proof} 
			The hypothesis entails that the initial string can be expressed as:
			\[\begin{array}{cccc}
			C[i-1;\per]=
			c_{1\per_1}...c_{i-m-2,\per_{i-m-2}}&
			\hspace{-0.1in} \underbrace{c_{i-m-1,\per_{i-m-1}}} & \hspace{-0.1in}\underbrace{c_{i-m, i-m+1}c_{i-m+1, i-m+2}...c_{i-1, i}}.\\
			&{\rm standard} &  m\ {\rm non-standard\ factors}&
			\end{array}\]
			As $c_{i,j}$ is a standard IS of $C[i-1;\per]$, we can write $j=i-n$ for some $n=0,1,2,..,i-1$.
			In order to show that this standard IS of $C[i-1;\per]$ is $c_{i,i-m}$ (or $n=m$), 
			it suffices to show that none of the factors of $C[i-1;\per]$ has column index $i-m$. First,  the non-standard factors next to $c_{i-m-1,\per_{i-m-1}}$ have column indices  $i-m+1,...,i$. Thus $(i-m)\not\in\{i-m+1,...,i\}$. Moreover, as  $c_{i-m-1,\per_{i-m-1}}$ is standard, we infer that $\per_{i-m-1}\not=i-m$, since otherwise $c_{i-m-1,\per_{i-m-1}}=c_{i-m-1,i-m}$ which is non-standard. 
			Finally if $\per_{i-m-2}=i-m$, then $c_{i-m-2, \per_{i-m-2}}=c_{i-m-2,i-m}$, which is a trivial entry,  since $i-m-(i-m-2)=2$. The same holds for all the preceding factors of $c_{i-m-2, \per_{i-m-2}}$ and the result follows.
		\end{proof}

		\begin{proposition} \label{identification theorem} The function $f_{k}: \NSEPs_k \mapsto \mathfrak{R}_k$ defined in eq. \emph{(\ref{bijective f})} is bijective. 
		\end{proposition}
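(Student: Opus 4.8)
The plan is to exploit that $\NSEPs_k$ and $\R_k$ are finite sets of the same cardinality $2^{k-1}$ (both recorded above, the count for $\NSEPs_k$ being readable off the recurrence (\ref{expansion of C})), so that bijectivity follows once injectivity is in hand. First I would check that $f_k$ is well defined as a map into $\R_k$: by construction each factor $c_{i,\per_i}$ of a non-trivial SEP is either standard or non-standard, so the value $r_i\in\{0,1\}$ is unambiguous, and the terminal factor $c_{k,\per_k}$ cannot be non-standard, since the only non-standard candidate in row $k$ would be $c_{k,k+1}$, which does not belong to $\Hes_k$; hence $r_k=1$ and $f_k(C)\in\R_k$.

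The core step is injectivity, which I would read off from Property 3 (together with the conventions $c_{0,0}=1$, $\per_0=0$): the pattern $r=f_k(C)$ reconstructs every factor of $C$. For an index $i$ with $r_i=0$ the factor $c_{i,\per_i}$ is non-standard, and the unique non-standard entry of row $i$ is $c_{i,i+1}$, forcing $\per_i=i+1$. For an index $i$ with $r_i=1$ the factor $c_{i,\per_i}$ is standard; if $j\ge 0$ denotes the length of the maximal block $r_{i-1}=r_{i-2}=\dots=r_{i-j}=0$ immediately preceding $i$ (so that either $r_{i-j-1}=1$, or $i-j-1=0$ and then $c_{0,0}$ is the standard anchor), the string $c_{i-j-1,\per_{i-j-1}}\,c_{i-j,\per_{i-j}}\cdots c_{i,\per_i}$ has standard first and last factor and non-standard factors in between, so Property 3 yields $\per_i=i-j$; the extreme case $j=i-1$ is the initial-run case $\per_i=1$. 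In every case $\per_i$ is a function of $r$ alone, so $f_k(C)=f_k(C')$ forces $C=C'$. Since $\card(\NSEPs_k)=\card(\R_k)=2^{k-1}<\infty$, the injection $f_k$ is automatically a bijection, and moreover $f_k^{-1}$ is exactly the reconstruction rule (\ref{def of varphi2}).

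The main obstacle is checking that Property 3 legitimately applies in each instance: one must confirm that the block of zeros counted inside $r$ equals the number of non-standard factors occurring between the two standard factors that anchor Property 3, and one must treat the boundary where this block reaches the top of the array, so that $i-j-1=0$ and $c_{0,0}$, $\per_0=0$ plays the role of the standard anchor. A self-contained alternative that avoids invoking the cardinality count would instead verify surjectivity directly: given $r\in\R_k$, the positions of its $1$'s partition $\{1,\dots,k\}$ into consecutive blocks, and the reconstruction rule sends each block onto itself via a single cycle, so the reconstructed index array $(\per_1,\dots,\per_k)$ is a genuine permutation whose image under $f_k$ is $r$; I would keep this observation as a remark rather than as the primary argument.
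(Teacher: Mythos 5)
Your proposal is correct and follows essentially the same route as the paper: it reduces bijectivity to injectivity via the cardinality count $\card(\NSEPs_k)=\card(\R_k)=2^{k-1}$, and establishes injectivity by showing that each column index $\per_i$ is reconstructed from $r$ alone — $\per_i=i+1$ when $r_i=0$, and $\per_i=i-j$ via Property 3 when $r_i=1$ with $j$ preceding zeros. Your added remarks (the well-definedness check that $r_k=1$, and the direct surjectivity alternative) are sound supplements but do not change the argument.
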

		\begin{proof}
			As the set $\mathfrak{R}_k$ and the set ${\mathcal{E}}_k$ have the same number of elements ($2^{k-1}$) it suffices to show that $f_{k}$ is injective. Let us consider  $Q=c_{1,\per_1}c_{2,\per_2}\dots c_{k,\per_k}$ and $P=c_{1,l_1}c_{2,l_2}\dots c_{k,l_n}$ in ${\mathcal{E}}_k$ such that $f_{k}(C)=f_{k}(P)$. We need to show that $Q=P$ or equivalently that $\per=l$. Let us call $f_{k}(C)=f_{k}(P)=\br$, where $\br=(r_1,r_2,...,r_{k-1},1)$. We examine the following cases:
			\begin{description}
				\item[I)] Let $r_i=0$. The Definition of $f_{k}$ implies that the $i$th non-trivial factor of $C$ and $P$ is non-standard. As there is only one such factor, that is the entry $(i, i+1)$, it must be the factor $c_{i,i+1}$. Thus $\per_i=l_i=i+1$.\vspace{-0.07in} 
				\item[II)] Let $r_i=1$. The Definition of $f_{k}$ implies that the $i$th non-trivial factors of $Q$ and $P$, say $c_{i,\per_i}$ and $c_{i,l_i}$, are
				standard. Property 4 in Proposition \ref{properties}, entails that $c_{i,\per_i}$ and $c_{i,l_i}$ are completely determined by the number of the consecutive non-standard predecessors of $c_{i,\per_i}$ and $c_{i,l_i}$. The result follows from  case I, which entails that both SEPs have identical non-standard factors occupying the same order positions. 
				%ISs of corresponding initial strings determined by the number of the consecutive non-standard predecessors of $c_{i,\per_i}$ and $c_{i,l_i}$, say $n_1, n_2$, respectively. Let $m$ be the number of consecutive zero predecessors of $r_i=1$ in $\br$. As the non-standard elements of $C$ and $P$ are mapped via $f_{k}$ to $0$s, it follows that $n_1=n_2=m$. Accordingly $\per_i=l_i=i-m$.%\vspace{-0.1in}
			\end{description}
			Therefore in all cases  $\per_i=l_i$, whence $C=P$ as required.  
		\end{proof}
		\begin{proposition} \label{unified formula} The function $\zeta_{k,i}(\br)$ in defined in \emph{(\ref{def of zeta})} can be expressed as an elementary integer function, which is given by :\vspace{-0.05in}
			\begin{equation} \label{zeta in terms of elementary functions2}
			\zeta_{k,i}(\br)=r_i(i-\max_{0\le j<i}\{j\cdot r_j\})-1\vspace{-0.1in}
			\end{equation}	
		\end{proposition}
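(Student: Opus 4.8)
The plan is to verify the identity by a short case analysis on the value of $r_i\in\{0,1\}$, matching each case against the three branches of the piecewise definition (\ref{def of zeta}). The observation that makes everything transparent is that, since each component $r_j$ is $0$ or $1$, the product $j\cdot r_j$ equals $j$ when $r_j=1$ and $0$ when $r_j=0$; consequently $\max_{1\le j<i}\{j\cdot r_j\}$ is exactly the largest index $j<i$ with $r_j=1$, or $0$ (by the convention $\max\emptyset=0$) when no such index exists.

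First I would dispose of the case $r_i=0$. Here the right-hand side of (\ref{zeta in terms of elementary functions2}) collapses to $0\cdot\bigl(i-\max_{1\le j<i}\{j\cdot r_j\}\bigr)-1=-1$, which agrees with the first branch of (\ref{def of zeta}); note this branch is the applicable one because $r_i=0$ forces $i\ne k$, since $r_k=1$ by the definition of $\R_k$.

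Next, for $r_i=1$, set $j_0=\max_{1\le j<i}\{j\cdot r_j\}$. If $j_0\ge 1$, then $j_0$ is the largest index below $i$ with $r_{j_0}=1$, so $r_m=0$ for every $m$ with $j_0<m\le i-1$; putting $j:=i-j_0-1$ we have $r_{i-j-1}=r_{j_0}=1$ and $r_m=0$ for all $m\in\lbracket i-j,i-1\rbracket=\lbracket j_0+1,i-1\rbracket$, so the second branch of (\ref{def of zeta}) yields $\zeta_{k,i}(r)=j=i-j_0-1=r_i\bigl(i-\max_{1\le j<i}\{j\cdot r_j\}\bigr)-1$. If instead $j_0=0$, then $r_1=\cdots=r_{i-1}=0$ (vacuously when $i=1$), which is precisely the hypothesis of the third branch, giving $\zeta_{k,i}(r)=i-1=r_i(i-0)-1$. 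In all subcases the two expressions coincide, which is the claim.

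I do not anticipate any genuine obstacle: the proof is a finite case check. The only point requiring a little care is the boundary behaviour at $i=1$, where the index set $\{j:1\le j<i\}$ is empty and one must invoke the stated convention $\max\emptyset=0$ to get $\zeta_{k,1}(r)=r_1-1\in\{-1,0\}$, consistent with both (\ref{def of zeta}) and the explicit $i=1$ computation already recorded in the text; and the mild bookkeeping of identifying the parameter $j$ of (\ref{def of zeta}) with the quantity $i-j_0-1$ produced by the maximum.
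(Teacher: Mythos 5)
Your proof is correct and follows essentially the same route as the paper's: a case split on $r_i\in\{0,1\}$, with the $r_i=1$ case subdivided according to whether $\max_{1\le j<i}\{j\cdot r_j\}$ is zero or equal to some $M=j_0>0$, identifying that maximum with the largest index below $i$ carrying a $1$. Your treatment is, if anything, slightly more careful about the empty-set convention at $i=1$ and about why the first branch of (\ref{def of zeta}) applies when $r_i=0$.
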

		\begin{proof} Let us call $z_{k,i}(\br)=r_i(i-\displaystyle\max_{0\le j<i}\{j\cdot r_j\})-1$, while $\zeta_{k,i}(\br)$ is given by eq. (\ref{def of zeta}). We shall show that $z_{k,i}(\br)=\zeta_{k,i}(\br)$ for all $\br=(r_1,r_2,...,r_i,...,r_{k-1},1)\in \mathfrak{R}_k$. 
			First we notice that if $i=1$, then, in view of  eq. (\ref{special case i=1}), the equality  $z_{k,1}(\br)=\zeta_{k,1}(\br)$  holds true for all $\br\in\R_k$. It remains to show that $z_{k,i}(\br)=\zeta_{k,i}(\br)$ for all $i\ge 2$. In this case we have: $$\max_{0\le j<i}\{j\cdot r_j\})=\max\{0\cdot r_0,\ 1\cdot r_1,..., (i-1)\cdot r_{i-1}\}=\max\{1\cdot r_1,..., (i-1)\cdot r_{i-1}\}=\max_{1\le j<i}\{j\cdot r_j\}).$$
			Therefore in the case when $i\ge 2$, we can use the expression: $z_{k,i}(\br)=r_i(i-\displaystyle\max_{1\le j<i}\{j\cdot r_j\})-1$.
			We examine the following cases:\\
			i) Let $r_i=0$. Then a simple evaluation gives  $z_{k,i}(\br)=-1=\zeta_{k,i}(\br)$.\\
			ii) Let $r_i=1$. We examine the following sub-cases:\vspace{-0.05in}
			\begin{enumerate}[a)] 
				\item Let $\displaystyle\max_{1\le j<i}\{j\cdot r_j\}=0$. Then $r_j=0$ for all $j$ such that $1\le j<i$. Applying the formula of $z_{k,i}(\br)$  we get $z_{k,i}(\br)=i-1=\zeta_{k,i}(\br)$.\vspace{-0.02in}
				\item Let $\displaystyle\max_{1\le j<i}\{j\cdot r_j\}=M$ and $M>0$. Then we can write:\vspace{-0.1in}
				$$\{j\cdot r_j\}_{1\le j<i}=\{1\cdot r_1,...,(M-1)r_{M-1},Mr_M,(M+1)r_{M+1},...,(i-1)r_{i-1}\} .$$
				We shall show that $r_{M}=1$. On the contrary we assume that $r_{M}=0$. Then the following equality must hold
				$$M=\max\{1r_1,...,(M-1)r_{M-1},0,(M+1)r_{M+1},...,(i-1)r_{i-1}\},$$	
				which is contradictory, because  $M\not\in \{1,2,...,M-1,0,M+1,...,i-1\}$.
				
				In this case we further conclude that $r_{M+1}=r_{M+2}=...=r_{i-1}=0$ ; for if otherwise $\displaystyle\max_{1\le j<i}\{j\cdot r_j\}>M$. Therefore we conclude that:\vspace{-0.01in}
				$\{j\cdot r_j\}_{1\le j\le i}=\{1r_1,...,Mr_M,0\}$.
				As the number of consecutive $0$s between $r_M=1$ and $r_i=1$ is $i-M-1$, Definition (\ref{def of zeta}) gives $\zeta_{k,i}(\br)=i-M-1$. Also the formula of $z_{k,i}(\br)$ yields $z_{k,i}(\br)=r_i(i-M)-1=1(i-M)-1=i-M-1$, whence $z_{k,i}(\br)=\zeta_{k,i}(\br)$.
			\end{enumerate}
			The proof of Proposition is complete.
		\end{proof}
	\begin{proposition}\label{nestdiv}
		Let $n,d\in \integers$ and $d\ge 1$. The following identity of nested divisions holds:
		\begin{equation}\label{nested divisions1} 
		\begin{array}{ccc} \lfloor\lfloor...\lfloor\lfloor n:\!\!\!&\underbrace{d\rfloor:d\rfloor...\rfloor :d}&\!\!\!\!\rfloor=\lfloor n:d^m\rfloor.\\                        &   m                   & 
		\end{array}\end{equation}
	\end{proposition}
	\begin{proof} Let $x$ be a real number and $p,q$ be positive integers.
		We shall use the well known identity
		\begin{equation} \label{nested division identity 1} \lfloor\lfloor x\rfloor:p\rfloor=\lfloor x:p\rfloor.
		\end{equation}
		(see  \cite{Xi2017}, eq. (P15), p. 47).\\
		Taking into account that $ (x:q):p=x:(p\cdot q)$, it follows from (\ref{nested division identity 1}) that:
		\begin{equation} \label{nested division identity 2} 
		\lfloor\lfloor x:q\rfloor:p\rfloor=\lfloor (x:q):p\rfloor=\lfloor x:(p\cdot q)\rfloor.
		\end{equation}
		To verify (\ref{nested divisions1}) we use induction on $m\in \integers_{0}$. Clearly, the identity holds for $m=0$.
		Let us assume that the identity (\ref{nested divisions1}) holds for $m=i$, that is:\vspace{-0.05in} 
		\[\begin{array}{ccc} \lfloor\lfloor...\lfloor\lfloor n:\!\!\!\!\!&\underbrace{d\rfloor:d\rfloor...\rfloor :d}&\!\!\!\!\rfloor=\lfloor n:d^i\rfloor.\\ 
		&    i                   & 
		\end{array}\vspace{-0.1in} \]
		The induction hypothesis combined with (\ref{nested division identity 2}) implies: 
		\[\begin{array}{ccc} \lfloor\!\!&\underbrace{\lfloor\lfloor...\lfloor\lfloor n:d\rfloor:d\rfloor...\rfloor :d\rfloor}&\!\! : d\rfloor= \lfloor\lfloor n:d^i\rfloor:d\rfloor=\lfloor n:(d^i\cdot d)\rfloor=\lfloor n:d^{i+1}\rfloor.\\ 
		&   \lfloor n:d^i\rfloor   & 
		\end{array}\]
		This completes the induction.
	\end{proof}
	%	\textbf{An Alternative Proof to Lemma} \ref{lem. restriction of the Green function}: It is well known that the Green's function $H(t,r)$ for $t\ge r+1$ and fixed $r\ge s$ solves the homogeneous equation (\ref{HLDE001}), assuming the initial values $H(r,r)=1$ and $H(r-i,r)=0$ for $i=1,2,...,p-1$ (see \cite{LaTr02} Theorem 3.4.1 p. 87). Also by virtue of Proposition \ref{KSIs solutions}, applied for $m=1$, the principal determinant function $\xi_{t,r}$ solves eq. (\ref{HLDE001}) for the same initial values while $r$ is also fixed. Thus the result follows from the uniqueness of the solution of an initial value problem.
		
	\begin{proposition}\label{prop. set equalities}	
The following equality of sets holds:	$$\integers_{s+1-p}=\bigcup_{j=1}^{p-1}\integers_{s-j}.$$
\end{proposition}
\begin{proof}
As $\integers_{s-j}\subseteq\integers_{s-(p-1)}$ for all $j\in\lbracket 1, p-1\rbracket$, it follows that $\integers_{s-(p-1)}=\bigcup_{j=1}^{p-1}\integers_{s-j}$
Now the equality follows from
$$\integers_{s+1-p}=\integers_{s-(p-1)}=\bigcup_{j=1}^{p-1}\integers_{s-j},$$
as asserted
\end{proof}

\begin{proposition}\label{prop. Casoratian Property}
The Casoratian $|\BXI_{t,r}|$ defined in eq. {\rm (\ref{Casorati matrices})} satisfies the  first order linear difference equation:
\begin{equation}\label{eq. Casoratian recurrence}
|\BXI_{t,r}|=(-1)^{p-1}\phi_p(t)|\BXI_{t-1,r}|.
\end{equation}
\end{proposition}
\begin{proof}
If we replace the elements $\xi^{(m)}_{t,r}$ for $1\le m\le p$ in the first row of $|\BXI_{t,r}|$ with the right-hand side of the recurrence (\ref{eq. ksi recurrence}) $|\BXI_{t,r}|$ takes the form:
\begin{equation}\label{Casorati matrices2}
|\BXI_{t,r}|=\left|\begin{array}{ccc}
\phi_p(t)\xi^{(1)}_{t-p,r}+...+\phi_{1}(t)\xi^{(1)}_{t-1,r}&...& \phi_p(t)\xi^{(p)}_{t-p,r}+...+\phi_{1}(t)\xi^{(p)}_{t-1,r}\vspace{0.05in}\\
\xi^{(1)}_{t-1,r}&...& \xi^{(p)}_{t-1,r}\vspace{0.05in}\\
\vdots & \vdots \vdots\vdots & \vdots\\
\xi^{(1)}_{t-p+1,r} &...& \xi^{(p)}_{t-p+1,r} \end{array}\right|. 
\end{equation}
Using the multi-linearity of determinants in rows, eq. (\ref{Casorati matrices2})	can be written as	\begin{equation*}\label{Casorati matrices3}
\begin{array}{llr}
|\BXI_{t,r}|&=&
\phi_p(t)\left|\begin{array}{ccc}
\xi^{(1)}_{t-p,r}&...& \xi^{(p)}_{t-p,r}\vspace{0.05in}\\
\xi^{(1)}_{t-1,r}&...& \xi^{(p)}_{t-1,r}\vspace{0.05in}\\
\vdots & \vdots \vdots\vdots & \vdots\\
			\xi^{(1)}_{t-p+1,r} &...& \xi^{(p)}_{t-p+1,r} \end{array}\right|+\phi_{p-1}(t)\left|\begin{array}{ccc}
			\xi^{(1)}_{t-p+1,r}&...& \xi^{(p)}_{t-p+1,r}\vspace{0.05in}\\
			\xi^{(1)}_{t-1,r}&...& \xi^{(p)}_{t-1,r}\vspace{0.05in}\\
			\vdots & \vdots \vdots\vdots & \vdots\\
			\xi^{(1)}_{t-p+1,r} &...& \xi^{(p)}_{t-p+1,r} \end{array}\right|\\\\
			&&+...+\phi_1(t)\left|\begin{array}{ccc}
			\xi^{(1)}_{t-1,r}&...& \xi^{(p)}_{t-1,r}\vspace{0.05in}\\
			\xi^{(1)}_{t-1,r}&...& \xi^{(p)}_{t-1,r}\vspace{0.05in}\\
			\vdots & \vdots \vdots\vdots & \vdots\\
			\xi^{(1)}_{t-p+1,r} &...& \xi^{(p)}_{t-p+1,r} \end{array}\right|. 
			\end{array}
			\end{equation*}
			The values of the determinants from the second term up to and including the last term of the right-hand side of the above equality are zero, since they have two identical rows, whence
			\begin{equation*}\label{Casorati matrices4}
			|\BXI_{t,r}|=
			\phi_p(t)\left|\begin{array}{cccc}
			\xi^{(1)}_{t-p,r}&\xi^{(2)}_{t-p,r}&...& \xi^{(p)}_{t-p,r}\vspace{0.05in}\\
			\xi^{(1)}_{t-1,r}&\xi^{(2)}_{t-1,r}&...& \xi^{(p)}_{t-1,r}\vspace{0.05in}\\
			\vdots &\vdots &  \vdots \vdots\vdots & \vdots\\
			\xi^{(1)}_{t-p+1,r} &\xi^{(2)}_{t-p+1,r} &...& \xi^{(p)}_{t-p+1,r} \end{array}\right|.
			\end{equation*}
			One needs $(p-1)$ successive row interchanges to move the first row  to the last row position and the above equality can be written as
			\begin{equation*}\label{Casorati matrices5}
			|\BXI_{t,r}|=(-1)^{p-1}
			\phi_p(t)\left|\begin{array}{cccc}
			\xi^{(1)}_{t-1,r}&\xi^{(2)}_{t-1,r}&...& \xi^{(p)}_{t-1,r}\vspace{0.05in}\\
			\xi^{(1)}_{t-2,r}&\xi^{(2)}_{t-2,r}&...& \xi^{(p)}_{t-2,r}\\
			\vdots & \vdots &\vdots \vdots\vdots & \vdots\\
			\xi^{(1)}_{t-p+1,r} &\xi^{(2)}_{t-p+1,r} &...& \xi^{(p)}_{t-p+1,r}\vspace{0.05in}\\
			\xi^{(1)}_{t-p,r}&	\xi^{(2)}_{t-p,r}&...& \xi^{(p)}_{t-p,r}
			\end{array}\right|=(-1)^{p-1}
			\phi_p(t)|\BXI_{t-1,r}|.
			\end{equation*}
			This completes the proof of Proposition.
		\end{proof}
		\begin{proposition}\label{prop. Green's function property}
			The Green's function associated with the difference operator in  eq. {\rm (\ref{eq. LDO})} has the property:\\
			\[H(t,t+p)=\frac{1}{\phi_p(t+p)}\ \ \ \ \text{for}\ \   t\in\integers_{s+1-p}. \]
		\end{proposition}
		\begin{proof}
			Eq. (\ref{eq: Explicit Green's Function}) applied for $r=t+p$ yields:
			\[H(t,t+p)= \left|\begin{array}{cccc}
				\xi^{(1)}_{t,s} & \xi^{(2)}_{t,s}&...& \xi^{(p)}_{t,s}\vspace{0.05in}\\
				\xi^{(1)}_{t+p-1,s}& \xi^{(2)}_{t+p-1,s}&...& \xi^{(p)}_{t+p-1,s}\vspace{0.05in}\\
				\vdots & \vdots & \vdots\vdots\vdots & \vdots\\
				\xi^{(1)}_{t+1,s} & \xi^{(2)}_{t+1,s} &...& \xi^{(p)}_{t+1,s}
				\end{array}\right|\left|\begin{array}{cccc}
				\xi^{(1)}_{t+p,s} & \xi^{(2)}_{t+p,s}&...& \xi^{(p)}_{t+p,s}\vspace{0.05in}\\
				\xi^{(1)}_{t+p-1,s}& \xi^{(2)}_{t+p-1,s}&...& \xi^{(p)}_{t+p-1,s}\\
				\vdots & \vdots & \vdots\vdots\vdots & \vdots\\
				\xi^{(1)}_{t+1,s} & \xi^{(2)}_{t+1,s} &...& \xi^{(p)}_{t+1,s}
				\end{array}\right|^{-1}.\]
			Applying $(p-1)$ successive row interchanges to the numerator determinant of $H(t,t+p)$ of the above equality, its first row is moved  to occupy the last row position, whence:
			\begin{equation}\label{Green function2}
			H(t,t+p)= (-1)^{p-1}\frac{\left|\begin{array}{cccc}	\xi^{(1)}_{t+p-1,s}& \xi^{(2)}_{t+p-1,s}&...& \xi^{(p)}_{t+p-1,s}\vspace{0.05in}\\
				\vdots & \vdots & \vdots\vdots\vdots & \vdots\\
				\xi^{(1)}_{t+1,s} & \xi^{(2)}_{t+1,s} &...& \xi^{(p)}_{t+1,s}\\
				\xi^{(1)}_{t,s} & \xi^{(2)}_{t,s}&...& \xi^{(p)}_{t,s}\vspace{0.05in}\\
				\end{array}\right|}{\left|\begin{array}{cccc}
				\xi^{(1)}_{t+p,s} & \xi^{(2)}_{t+p,s}&...& \xi^{(p)}_{t+p,s}\vspace{0.05in}\\
				\xi^{(1)}_{t+p-1,s}& \xi^{(2)}_{t+p-1,s}&...& \xi^{(p)}_{t+p-1,s}\vspace{0.05in}\\
				\vdots & \vdots & \vdots\vdots\vdots & \vdots\\
				\xi^{(1)}_{t+1,s} & \xi^{(2)}_{t+1,s} &...& \xi^{(p)}_{t+1,s}
				\end{array}\right|}=(-1)^{p-1}\frac{|\BXI_{t+p-1,s}|}{|\BXI_{t+p,s}|}.  
			\end{equation}
			The Casoratian recurrence in eq. (\ref{eq. Casoratian recurrence})
			takes the form
			\begin{equation}\label{Casoratian equivalent}
			|\BXI_{t+p,s}|=(-1)^{p-1}\phi_p(t+p)|\BXI_{t+p-1,s}| 
			\end{equation}
			Taking into account that $\phi_p(q)\not=0$ for all $q\ge s+1$, setting $q=t+p$, it follows that  $q=t+p\ge s+1$, whence $t\ge s+1-p$. and $\phi_p(t+p)\not=0$ for all  $t\in\integers_{s+1-p}$. We conclude  that eq. (\ref{Casoratian equivalent}) can be equivalently written as:
			\begin{equation}\label{eq. Casoratian recurrence2}
			\frac{1}{\phi_p(t+p)}=(-1)^{p-1}\frac{|\BXI_{t+p-1,s}|}{|\BXI_{t+p,s}|}.
			\end{equation}
			As the right-hand side members of  eqs. (\ref{Green function2}) and (\ref{eq. Casoratian recurrence2}) coincide, the result follows.
		\end{proof}
		\section{[The Infinite Gaussian  Elimination]}\label{Appendix: IGE}
		We present here the basic steps of the 
		IGE implemented with rightmost pivot
		 elements the $(-1)$s in eq. (\ref{infinite system representation}). It constructs the rows of $\FRREF(\A)$ along with the particular solution (see Subsection \ref{subs. Infinite Gaussian Elimination}), yielding equivalent row-recurrences. At the end of this Appendix we give some supplementary results, as reported in Example \ref{ex.1}.
		
		In what follows, the rows of the coefficient matrix,  $\A$,  in eq. (\ref{infinite system representation}) are denoted as $\BR_i$ for $i\ge 1$. In the first algorithmic step $\BR_1$ is normalized by multiplying $\BR_1$ with $(-1)$, yielding: $\tilde{\BR}_1=(-1)\BR_1$. The new row $\tilde{\BR}_1$ replaces $\BR_1$ and remains invariant during the forthcoming process. That is $\tilde{\BR}_1$ is the first row of $\FRREF(\A)$. 
		In the second step the algorithm uses $\tilde{\BR}_1$ as pivot row to eliminate the entry $\phi_1(r+2)$ of $\BR_2$, positioned in the same column and below the (rightmost) pivot entry $1$ of $\tilde{\BR}_1$. This is obtained by multiplying $(-\tilde{\BR}_1)$ (or $\BR_1$) with $\phi_1(r+2)$ and adding the result to $\BR_2$. After normalization, the second step is described by:  $\tilde{\BR}_2=-[\phi_1(r+2)(-\tilde{\BR}_1)+\BR_2]$.  The new row $\tilde{\BR}_2$ replaces  $\BR_2$,  yielding the second row of $\FRREF(\A)$.
		In the third step the algorithm uses $\tilde{\BR}_1$  and $\tilde{\BR}_2$ as pivot rows  to eliminate the entries  $\phi_2(r+3)$ and  $\phi_1(r+3)$ of $\BR_3$, respectively. After normalization, the new row $\tilde{\BR}_3$ is given by: $\tilde{\BR}_3=-[\phi_2(r+3)(-\tilde{\BR}_1)+\phi_1(r+3)(-\tilde{\BR}_2)+\BR_3]$. The new row $\tilde{\BR}_3$ replaces $\BR_3$ yielding the third row of $\FRREF(\A)$. Proceeding in this way the algorithm constructs the rows of a FRREF of $\A$, which are given by\vspace{-0.15in} 		
		\begin{equation}\label{eq. row recurrence}
		\tilde{\BR}_i=\phi_{1}(r+i)\tilde{\BR}_{i-1}+\phi_{2}(r+i)\tilde{\BR}_{i-2}+...+\phi_{p}(r+i)\tilde{\BR}_{i-p}- \BR_i
		\end{equation}
		provided that $\tilde{\BR}_i= \0$, whenever $1-p\le i\le 0$. The set $\{\tilde{\BR}_i\}_{1-p\le i\le 0}$,  consisting of $p$ zero rows, can be viewed as a set of initial conditions associated with the recurrence in eq. (\ref{eq. row recurrence}), generating the rows of $\FRREF(\A)=[\tilde{\BR}_{i}]_{i\ge 1}$. 
		For example if $i=1$, then  eq. (\ref{eq. row recurrence}) gives: $\tilde{\BR}_1=-\BR_1$, since  $\tilde{\BR}_{0}=\tilde{\BR}_{-1}=...=\tilde{\BR}_{1-p}=\0$. 
		If $i=2,3$, then eq. (\ref{eq. row recurrence}) gives:\vspace{-0.1in} 
		\begin{align*}
		\tilde{\BR}_2=&
		\phi_1(r+2)\tilde{\BR}_1-\BR_2=-[\phi_1(r+2)(-\tilde{\BR}_1)+\BR_2]\\
		\tilde{\BR}_3=&\phi_2(r+3)\tilde{\BR}_1+\phi_1(r+3)\tilde{\BR}_2-\BR_3=-[\phi_2(r+3)(-\tilde{\BR}_1)+\phi_1(r+3)(-\tilde{\BR}_2)+\BR_3],
		\end{align*}
		as expected, and so forth.
		
		As an alternative, the recurrence in  eq. (\ref{eq. row recurrence}) can be viewed as a VC-LDE($p$) with initial condition sequences $\tilde{\BR}_{m}$ for $1\le m\le p$, constructed  by the finite Gaussian elimination algorithm after a sequence of $p$ steps, whereas  the algorithm is implemented with rightmost pivoting and forcing terms $-\BR_i$ for $i\ge 1$. Thereafter, the recurrence in eq. (\ref{eq. row recurrence}) generates the remaining rows: $\{\tilde{\BR}_{i}\}_{i\ge p+1}$.
		
		Next, we employ Example \ref{ex.1} to verify the first two zero outcomes of the  product 
		$\A [0,1,\xi^{(1)}_{r+1,r},\xi^{(1)}_{r+2,r},...]'$:
		\begin{align*}
		[\phi_2(r+1),\phi_1(r+1),-1,0,...][0,1,\xi^{(1)}_{r+1,r}, \xi^{(1)}_{r+2,r},...]'&=\phi_1(r+1)\cdot 1+(-1)\phi_1(r+1)=0,\\
		[0,\phi_2(r+2),\phi_1(r+2),-1,0,...][0,1,\xi^{(1)}_{r+1,r}, \xi^{(1)}_{r+2,r},...]'&=\phi_2(r+2)\cdot 1+\phi_1(r+2)\phi_1(r+1)+\\
		& (-1)(\phi _{1}(r+1)\phi _{1}(r+2)+\phi_{2}(r+2))=0,\\
		\hspace{-1in}... &\hspace{1in} ...
		\end{align*}

		A particular solution is also constructed by the IGE algorithm, by  applying the same sequence of row elementary operations, used by the IGE for the row reduction of $\A$ to $\FRREF(\A)$, but now to the sequence  $\{-v_{r+i}\}_{i\ge 1}$. The algorithm gives rise to a  recurrence, which  similarly follows as in eq. (\ref{eq. row recurrence}) 
		\[\tilde{v}_{r+i}=\phi_{1}(r+i)\tilde{v}_{r+i-1}+\phi_{2}(r+i)\tilde{v}_{r+i-2}+...+\phi_{p}(r+i)\tilde{v}_{r+i-p}+ v_{r+i}, \ \ i\ge 1,  \]
		taking  on  zero initial values, that is $\tilde{v}_{r+1-m}=0$ for all $m\in\lbracket 1, p\rbracket$. The so  constructed solution  sequence $\{\tilde{v}_{t}\}_{t\ge r+1}$ is a particular solution, which is also represented explicitly by a Hessenbergian function that is  $y^{par}_t=\tilde{v}_{t}$ for $t\ge r+1$ (see Proposition \ref{particular solution determinant representation}).
		
		The general solution of eq.  (\ref{VC-LDE(p)}) is a linear combination of the fundamental solutions with coefficients arbitrary initial condition values $y_{r+1-m}=a_m$ for $1\le m\le p$ (that is the general homogeneous solution, see  Proposition \ref{Homogeneous solution}) plus the particular solution mentioned above (see also eq. (\ref{nonhomogeneous solution2})).

		\section{[Algorithms] }\label{Appendix: Algorithms} 
		Two Algorithms are presented in this Appendix. The first, returns the Leibnizian representation of Hessenbergians given in eq.  (\ref{compact-form}). The second, returns the restriction of the Green'r function $H(t,r)$, involved in the general homogeneous solution of VC-LDEs($p$), which coincides with $\xi_{t,r}$ (see Theorem 5).  This  algorithm is completed by the construction of the general nonhomogeneous solution of eq. (\ref{nonhomogeneous solution}), expressed in terms of the Green's function $H(t,r)$ (or $\xi_{t,s}$). Both Algorithms are followed by automatically executable computer programs written in Mathematica's symbolic language. The instructions of the algorithms follow the structure of the paper and use the corresponding formulas established in it. This can be viewed as a verification scheme for the validity of the results derived and used in the paper.
		
		In order to run the first program one needs to insert the order $k$ of the matrix. This is the only one external input, whereas the other inputs are internal instructions defined within the program and remain invariant in each new call of the program. Using Mathematica symbolic computation, the program  returns an expression of eq. (\ref{compact-form}) exclusively in terms of the non-trivial entries $h_{i,j}$ of $\Hes_k$. This program is to be compared with corresponding routines evaluating Hessenbergians.  
		
		The functions $\z_{k,i}(\br)$ and $\tau_k(m)$ along with their composite $\sigma_{k,i}(m)$ are defined within the program expressing the corresponding formulas in the chosen language. In their program notation, the variable $k$ (the order of the matrix) is omitted. Instead they are designated  as $\z_i, \tau, \sigma_i$, respectively, since all these functions are redefined for each new input of $k$. 
		
		In both algorithms each algorithmic step is followed by the corresponding instruction of the program, which is directly executable by Mathematica.
		
		\begin{algorithm}[Leibnizian representation of Hessenbergians]\label{algo. Compact}
			\begin{enumerate}[i)]
				\begin{align*}
				In[1]:\ \  & \$ \text{\rm Assumptions} = k > 0\ \&\&\ k \in {\rm Integers};\\
				\intertext{\rm \item Enter the order of the Hessenberg matrix:}
				In[2]:\ \  & k:=... \vspace{-0.1in}\\
				\intertext{\rm	\item Define the Hessenberg matrix $\Hes_k=(h_{i,j})_{i,j\in[[1,k]]}$ of order $k$:}
				In[3]:\ \   &  \Hes[k]:={\rm Table}[\ {\rm If}\ [j\le i+1,\ h[i,j],\ 0],\{i,1,k\},\{j,1,k\}]\vspace{-0.1in}\\
				\intertext{\rm	\item Define the entries $c_{i,j}$ of $\Hes_k$, according to eq. (\ref{Modified Hessenberg Matrix}):}
				In[4]:\ \   &  c[i\_,j\_]:=\ {\rm If}\ [j\not=i+1,\ \Hes[k][[i,j]],\ -\Hes[k][[i,j]]]\vspace{-0.1in} \\
				\intertext{\rm \item  Define the $i$th component, say $\tau_i(m)$, of $\tau(m)$  (given by eq. (\ref{tau bijection})) and assign  $\tau_1(m)=1$, whenever $i\not\in\lbracket 1,k-1\rbracket$:}
				In[5]:\ \   & \tau[i\_,m\_]:=\ {\rm If}\ [1 \le i \le k-1, \lfloor m \div 2^{k-i-1} \rfloor - 2\lfloor \frac{\lfloor m\div 2^{k-i-1}\rfloor}{2}\rfloor,1]\vspace{-0.1in}\\
				\intertext{\rm\item Define the composition of $\z_{i}$ in eq. (\ref{j minus function of zeros count}) and  $\tau$ in eq. (\ref{tau bijection}), using the function $\zeta_{i}$ in eq. (\ref{zeta in terms of elementary functions}), which, in turn, is constructed in a step by step procedure as follows:\newline
					a) Define the list of products $(j\cdot \tau(j,m))_{j=0,1,...,i-1}$:}
				In[6]:\ \   &  {\rm Prod}[i\_,m\_]:={\rm Table}[\ j\times \tau[j,m], \{j,0,i-1\}]\vspace{-0.1in}\\
				\intertext{\rm\ b) Evaluate the maximum value of ${\rm Prod}[i,m]$ and group these values in lists \newline
					${\rm M}[m]=\{\max({\rm Prod}[i,m],\ i\in\lbracket 1,k\rbracket)\}$: }
				In[7]:\ \   &  {\rm M}[m\_]:={\rm Table}[\ {\rm Max}[{\rm Prod}[i,m]], \{i,1,k\}]\vspace{-0.1in}\\
				\intertext{\rm	c) Define the function $Z(i,m)=\zeta_{i}\circ \tau(m)$ for $(i,m)\in\lbracket 1,k\rbracket\times\I_{k-1}$, according to eq. (\ref{zeta in terms of elementary functions}), that is $Z(i,m)$ is the number of consecutive zero predecessors of the factor $\tau(i,m)$ :}
				In[8]:\ \   & Z[i\_,m\_]:=\tau[i,m]\times (i-{\rm M}[m][[i]])-1\vspace{-0.1in}\\
				\intertext{\rm \item Define $\sigma_{i}(m)$ in terms of $Z(i,m)$, defined as  $\sigma_{i}(m)=\z_{i}\circ\tau(m)=i-\zeta_{i}(\tau(m))$: }
				In[9]:\ \   & \sigma[i\_,m\_]:=i-Z[i,m]\vspace{-0.1in}\\
				\intertext{\rm\item  Define the Hessenbergian formula (\ref{compact-form}):}
				In[10]:\ \   &  {\rm Hsb}[k]:=\sum_{m=0}^{2^{k\!-\!1}-1} \prod_{i=1}^kc[i,\sigma[i,m]]\vspace{-0.1in}\\
				\intertext{\rm\item  Expand the Hessenbergian formula:}
				In[11]:\ \   &  {\rm Expand}[{\rm Hsb}[k]]
				\intertext{\rm\item   Check whether the equation ${\rm Hsb}[k]={\rm Det}[\Hes[k]]$ holds true, where ${\rm Det}[\ ]$ stands for Mathematica's symbolic evaluation of determinants\vspace{-0.1in}:}
				In[12]:\ \   &  {\rm Hsb}[k]-{\rm Det}[\Hes[k]]==0
				\end{align*}
			\end{enumerate}
		\end{algorithm}
		As an example, setting $k=4$ and running the above program, it returns ${\rm Hsb}[4]$:
		\vspace{0.1in}
		\[\begin{array}{ll}\hspace{-0.625in} Out[1] =&-h[1,2] h[2,3] h[3,4] h[4,1] + h[1,1] h[2,3] h[3,4] h[4,2]\\ & + h[1,2] h[2,1] h[3,4] h[4,3] -h[1,1] h[2,2] h[3,4] h[4,3] \\ &+h[1,2] h[2,3] h[3,1] h[4,4]-h[1,1] h[2,3] h[3,2] h[4,4]\\ &-h[1,2] h[2,1] h[3,3] h[4,4]+h[1,1] h[2,2] h[3,3] h[4,4]
		\end{array}\]
		The program replies to the instruction (ix): "TRUE". This can be repeated by any value of $k\ge 2$, and the program replies "TRUE".
		
		The second Algorithm computes the Green's function restriction stated in Theorem \ref{theo. extension of the Green's function domain} as a Hessenbergian, followed be the construction of the general solution of eq. (\ref{VC-LDE(p)}) in terms of the Green function in eq. (\ref{nonhomogeneous solution}).
		
		\begin{algorithm}[Green's function and  the general solution of VC-LDEs($p$)]\label{algo. general solution}
			
			\begin{enumerate}[i)]
				\begin{align*}
				In[1]:\ \  & \$ \text{\rm Assumptions} = p > 0\ \&\&\ p \in {\rm Integers}\ \&\&\  s \in {\rm Integers}\ \&\&\  r \in {\rm Integers}\ \&\&\  t \in {\rm Integers};\\
				\intertext{\rm \item Enter the order of the linear difference equation:}
				In[2]:\ \  & p:=... \\
				\intertext{\rm	\item Enter the value of the variable $r\ge s$:}
				In[3]:\ \   & r:=... \\
				\intertext{\rm	\item Enter the value of the variable $t$ such that $t>r$:}
				In[4]:\ \   &  t:=... \\
				\intertext{\rm \item Replace the entries of $\Hes_k$ with the entries of $\BGF_{t,r}$ according to the assignment  \ref{assignment0}:}
				In[5]:\ \   & h[i\_, j\_ ]:= {\rm Which}[i == j - 1, -1, 1 \le i - j + 1 \le p, \phi_{i - j + 1}[r+i], {\rm True}, 0]\\
				\intertext{\rm\item Define the principal matrix as a function of $n,m$ for $n>m$:}
				In[6]:\ \   & \BGF[n\_, m\_] := {\rm Table}[h[i, j], \{i, m+1-r, n-r \}, \{j,   m+1-r, n-r\}]\\
				\intertext{\rm	\item Define the Green's function restriction:\newline $H(n,j)=\xi_{n,j}$ for $n>j$,  $H(n,j)=0$ for  $n<j$, $H(n,j)=1$ for $n=j$:}
				In[7]:\ \   & H[n\_, j\_] := {\rm Which}[n < j, 0, n == j, 1, n == j + 1, \phi_1[j + 1], n\ge j + 2,{\rm Det}[\BGF_{n,j}]]\\
				\intertext{\rm	\item Define the general solution formula in eq. (\ref{nonhomogeneous solution}) (or eq. (\ref{nonhomogeneous solution2})) with initial condition values $\{y_{r-p+1}, ...,y_r\}$, and forcing terms $v_{r+i}$, as a function of $n$:}
				In[8]:\ \   &
				y[n\_]:=\displaystyle\sum_{m=1}^{p}\sum_{i=1}^{p+1-m}\phi_{m-1+i}(r+i)H(n,r+i)y_{r+1-m}+\displaystyle\sum_{i=1}^{n-r} H(n,r+i)v_{r+i}.\\
				\intertext{\rm \item Apply the Definition of the  Green's function in  (vi) for $n=t$ and $j=r$:}
				In[9]:\ \   & {\rm Expand} [H[t, r]]\\
				\intertext{\rm\item Apply the general solution with $n=t$:}
				In[10]:\ \   &  {\rm Expand}[y[t]]
				\end{align*}
			\end{enumerate}
		\end{algorithm}
		As an illustrative example, setting $p=2$, $t=5$, $s=1$ and $r=2$ and running the above program, it returns the following expression\vspace{-0.2in}
		\begin{enumerate}[\mbox{}]
			\item
			\begin{align*}
			Out[1]: & =\ \phi_1(3) \phi_1(4) \phi_1(5)+\phi_1(5)\phi_2(4) +\phi_1(3) \phi_2(5).\\
			\intertext{\rm\item This is an expansion of the Green's function $H(5,2)=\xi_{5,2}$ associated with the second order VC-LDE.\vspace{-0.15in}} 
			\intertext{\rm\item The solution $y_5$ of the initial value problem $y_2=a, y_1=b$ with  forcing terms $v_3, v_4, v_5$ is also recoved by the program yielding:}
			Out[2]:  =& \ \phi_1(3) \phi_1(4) \phi_1(5) a + \phi_2(4) \phi_1(5) a + \phi_1(3) \phi_2(5) a + \phi_1(4) \phi_1(5) \phi_2(3) b + \phi_2(3) \phi_2(5)b \\
			& +v_4 \phi_1(5) + v_3 \phi_1(4) \phi_1(5)+v_3\phi_2(5) +v_5.
			\end{align*}
		\end{enumerate}
		This result is in accord with the solution expansion obtained directly by recursion.
	\end{appendices}
	
\end{document}